\newcommand{\cE}{{\mathcal{E}}}
\newcommand{\cU}{{\mathcal{U}}}
\newcommand{\cO}{{\mathcal{O}}}
\newcommand{\cI}{{\mathcal{I}}}
\newcommand{\cL}{{\mathcal{L}}}
\newcommand{\cP}{{\mathcal{P}}}
\newcommand{\PP}{{\mathbf{P}}}
\newcommand{\ZZ}{{\mathbf{Z}}}
\newcommand{\CC}{{\mathbf{C}}}
\newcommand{\QQ}{{\mathbf{Q}}}
\newcommand{\cQ}{{\mathcal{Q}}}
\newcommand{\str}{{\mathcal{O}}}
\newcommand{\cZ}{\mathcal{Z}}
\newcommand{\cT}{{\mathcal{T}}}
\renewcommand{\phi}{\varphi}
    \newtheorem{lemma}{Lemma}[section]
    \newtheorem{proposition}[lemma]{Proposition}
    \newtheorem{theorem}[lemma]{Theorem}
    \newtheorem{corollary}[lemma]{Corollary}
    \newtheorem{procedure}[lemma]{Procedure}
   \theoremstyle{definition}
    \newtheorem{definition}[lemma]{Definition}
    \newtheorem{example}[lemma]{Example}
    \newtheorem{remark}[lemma]{Remark}
    \newtheorem{construction}[lemma]{Construction}
    \DeclareMathOperator{\rank}{rank}
\DeclareMathOperator{\sing}{{sing}}
\DeclareMathOperator{\codim}{{codim}}
\DeclareMathOperator{\prim}{{prim}}
\DeclareMathOperator{\red}{{red}}
\DeclareMathOperator{\MW}{{MW}}
\DeclareMathOperator{\coker}{{coker}}
\DeclareMathOperator{\Gr}{{Gr}}
\DeclareMathOperator{\spa}{{span}}
\DeclareMathOperator{\CCon}{{Con}}
\DeclareMathOperator{\Bl}{{Bl}}
\DeclareMathOperator{\ord}{{ord}}
\DeclareMathOperator{\sm}{{smooth}}
\begin{document}
\title[Elliptic threefolds]{Calculating the Mordell-Weil rank of elliptic
threefolds and the cohomology of singular hypersurfaces}
\author{Klaus Hulek}
\address{Institut f\"ur Al\-ge\-bra\-ische Geo\-me\-trie, Leibniz Universit\"at
Hannover, Wel\-fen\-gar\-ten 1, 30167 Hannover, Germany}
\email{hulek@math.uni-hannover.de}
\author{Remke Kloosterman}
\address{Institut f\"ur Mathematik, Humboldt Universit\"at zu Berlin, Unter den Linden 6, 10099 Berlin, Germany}
\email{klooster@math.hu-berlin.de}

\begin{abstract}
In this paper we give a method for calculating the rank of a general
elliptic curve over the field of rational functions in two variables. 
We reduce this problem to calculating the
cohomology of a singular hypersurface in a weighted projective $4$-space. We
then
give a method for calculating the cohomology of a certain class of singular
hypersurfaces, extending work of Dimca for the isolated singularity case. 
\end{abstract}
\subjclass{14J30 (primary); 14J70,  32S20, 32S35, 32S50 (secondary)}
\keywords{}
\date{\today}
\thanks{The authors wish to thank Chris Peters and Joseph Steenbrink for giving
us a preview of their upcoming book \cite{PS} and  Eduard
Looijenga and Orsola Tommasi for providing results from algebraic topology.
We wish to thank Noriko Yui for drawing our attention
to the examples of Hirzebruch discussed in Section~\ref{secBeh}.}
\maketitle

\section{Introduction}\label{secInt}

Throughout this paper we work over the field of complex numbers $\CC$.
We study families $\pi:X \to S$ of elliptic curves over rational
surfaces, i.e., $X$ is a smooth threefold, $S$ a smooth rational surface and
$\pi$ is a flat morphism admitting a section $\sigma_0:S\to X$. Throughout this 
paper we will  assume that  $X$ is not birational to a product $E\times S'$
with 
$E$ an elliptic curve and $S'$ a rational surface.

The two main invariants of $\pi$ are its configuration of singular fibers and
the Mordell-Weil group $\MW(\pi)$ consisting of {\it rational} sections of
$\pi$.
Unlike the configuration of singular fibers the Mordell-Weil group is a
birational invariant (in the sense of Section~\ref{secSetup}).

The configuration of singular fibers is well-understood. The general fiber of
$\pi$ is an elliptic curve over $\CC(S)$, in particular we have an equation of
the form
 \begin{equation}\label{Weieqn} y^2=x^3+Ax+B, \mbox{ where } A,B\in
\CC(S).\end{equation}
The singular fibers lie over the curve $\Delta$ given by the zero and pole
divisor of $4A^3+27B^2$. The fiber-type over a general point $p$ of some
irreducible component of $\Delta$ can be easily calculated using Tate's
algorithm. The fiber-type over a special point can be calculated using the work
of Miranda \cite{MirEllThree}.

In this paper we concentrate on the Mordell-Weil group $\MW(\pi)$.  Using the
Shioda-Tate-Wazir  formula \cite[Theorem 4.2]{Waz} one can relate 
the rank of $\MW(\pi)$ to the Picard numbers $\rho(S)$ and $\rho(X)$ 
and the type  of singular fibers
of $\pi$ over a general point of each component of $\Delta$. In  general it
turns out to be rather hard to calculate $\rho(X)$ directly. Even in the case of
elliptic surfaces it is a difficult problem to calculate $\rho(X)$ for a given
example,
this can only be done in very specific cases, see e.g. \cite{KloR15}.

The main idea is the following: 
every elliptic threefold over a rational surface (with a section) has a model as
a hypersurface $Y$ of degree $6n$ in the weighted projective space
$\PP:=\PP(2n,3n,1,1,1)$, for some $n$. The existence of such a model (with
minimal $n$) is a direct
consequence of the existence of a (global minimal) Weierstrass equation for an
elliptic
curve over the function field $\CC(S)$ of $S$. Whenever we refer to a minimal
model in this 
paper we mean the model given by a minimal Weierstrass equation, not to a
minimal model in the 
sense of Mori theory. 
In general, this threefold $Y$ is singular. In the first part of this paper we
show 

\begin{theorem}\label{thmCohRel} Let $\pi: X \to S$ be an elliptic threefold $X$
over a rational surface $S$ and let $Y$ be a minimal model of $X/S$ in
$\PP(2n,3n,1,1,1)$. Assume that $H^4(Y,\QQ)$ has a pure weight 4 Hodge
structure.
Then
 \[ \rank \MW(\pi) = \rank (H^{2,2}( H^4(Y,\CC)) \cap H^4(Y,\ZZ)) - 1.\]
\end{theorem}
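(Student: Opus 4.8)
The plan is to transfer the computation to a smooth model of the fibration and then combine a weight argument with the Shioda--Tate--Wazir formula. Since the Mordell--Weil rank depends only on the generic fibre and $S$ is rational, the Mordell--Weil rank of $\pi$ equals that of $Y$ over the base $\PP^2$ of the weighted projective space. Replacing $X$ by a smooth, relatively minimal model of $Y$ over $\PP^2$ with a section --- which has the same Mordell--Weil rank --- I may assume $S=\PP^2$ and that there is a birational morphism $\psi\colon X\to Y$. Put $Z=\Sing(Y)$ and $E=\psi^{-1}(Z)\subset X$. The Jacobian criterion applied to the Weierstrass equation shows that $Z$ lies over the discriminant curve, so $\dim Z\le 1$ and $\dim E\le 2$.

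First I would feed $\psi$, together with $E\hookrightarrow X$ and $Z\hookrightarrow Y$, into the Mayer--Vietoris sequence of mixed Hodge structures
\[
\cdots\to H^3(X)\to H^3(E)\xrightarrow{\partial}H^4(Y)\to H^4(X)\xrightarrow{r}H^4(E)\to\cdots,
\]
where I have dropped the vanishing groups $H^3(Z)=H^4(Z)=0$. Since $\dim E\le 2$, the weights on $H^3(E)$ are $\le 3$; as $H^4(Y,\QQ)$ is pure of weight $4$ and $\partial$ is a morphism of mixed Hodge structures, $\partial=0$. Hence $H^4(Y)\cong\ker r$ as a pure Hodge structure, so $\rank\bigl(H^{2,2}(H^4(Y,\CC))\cap H^4(Y,\ZZ)\bigr)$ equals (up to finite index on the integral lattices) the rank of the group of Hodge classes of $H^4(X)$ that restrict to zero on every irreducible component of $E$.

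Next I would evaluate this rank. By Hard Lefschetz, cup product with an ample class $L$ gives an isomorphism of Hodge structures $H^2(X)\xrightarrow{\sim}H^4(X)$, so the Hodge classes of $H^4(X)$ are exactly the $L\cdot D$ with $D\in\Pic(X)\otimes\QQ$, a group of rank $\rho(X)$; for an irreducible component $E_j$ of $E$, the restriction of $L\cdot D$ to $E_j$ is measured by the intersection number $L\cdot D\cdot E_j$ on $X$. So the required rank is that of $\{D\in\Pic(X)\otimes\QQ:\ L\cdot D\cdot E_j=0\text{ for all }j\}$. Because $E$ is $\psi$-exceptional, the matrix $(L\cdot E_j\cdot E_k)$ is negative definite, so these conditions are independent and the rank is $\rho(X)-N$, where $N$ is the number of $\psi$-exceptional divisors. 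Now apply the Shioda--Tate--Wazir formula \cite{Waz}: $\rho(X)=\rho(\PP^2)+1+\rank\MW(\pi)+k$, with $k$ the rank of the span of the fibre components not meeting the zero section. The decisive geometric point is that in $Y\subset\PP(2n,3n,1,1,1)$ the zero section is contracted: the projection to $\PP^2$ is undefined exactly along $Y\cap\{z_0=z_1=z_2=0\}$, which is a single point, and $\psi$ re-extracts the zero section over it. Thus the $\psi$-exceptional divisors are the zero section together with the $k$ fibral divisors, so $N=k+1$ and
\[
\rank\bigl(H^{2,2}(H^4(Y,\CC))\cap H^4(Y,\ZZ)\bigr)=\rho(X)-N=\rho(\PP^2)+\rank\MW(\pi)=1+\rank\MW(\pi),
\]
as claimed.

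I expect the final point to be the real obstacle: one must check rigorously that the zero section is not a divisor on $Y\subset\PP(2n,3n,1,1,1)$ but gets blown down there --- this is because the coordinates of $\PP(2n,3n,1,1,1)$ realise the elliptic fibration so that its point at infinity collapses into the singular line $\{z_0=z_1=z_2=0\}$ --- and then match the $\psi$-exceptional divisors with the fibral term $k$ in Wazir's formula with care, in particular for the divisors lying over the finitely many special points of the discriminant (where the Kodaira--Miranda fibre is non-generic), so that $N$ is accounted for correctly. Granting the purity hypothesis, the rest (the Mayer--Vietoris sequence, Hard Lefschetz, and the negativity of the exceptional set) is formal.
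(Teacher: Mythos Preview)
Your approach is genuinely different from the paper's and, were it to go through, would be more direct. The paper does not attempt a one-shot comparison of $H^4(X)$ with $H^4(Y)$ via a single resolution; instead it follows Miranda's explicit resolution procedure step by step (Section~\ref{secBir}) and checks that the quantity
\[
\rank\bigl(H^{2,2}(H^4(X_i,\CC))\cap H^4(X_i,\ZZ)\bigr)-\rho(S_i)-f_i-1
\]
is invariant under each of Miranda's three types of modification, applying the Mayer--Vietoris sequence (Theorem~\ref{thmMV}) at every step. The essential feature of Miranda's construction is that it repeatedly \emph{blows up the base surface}: one passes from $S_0=\PP^2$ through a tower of blow-ups $S_i$, and only over the final $S'$ does one obtain a smooth flat model.

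This is exactly where your argument has a gap. You assume the existence of a smooth $X$ carrying simultaneously a flat morphism $X\to\PP^2$ (needed for Shioda--Tate--Wazir as you invoke it) and a birational morphism $\psi\colon X\to Y$. Such an $X$ need not exist. The Weierstrass model $X_0$ (the blow-up of $Y$ at $(1{:}1{:}0{:}0{:}0)$) is flat over $\PP^2$ but typically singular; along one-dimensional components of $\Sing(X_0)$ one can blow up and stay flat, but at \emph{isolated} threefold singularities of $X_0$ every projective resolution introduces an exceptional divisor mapping to a \emph{point} of $\PP^2$, destroying flatness. Avoiding this would require a small resolution, which need not exist projectively --- the paper's remark at the end of Section~\ref{secBir} is organised around precisely this obstruction, and the examples in Section~\ref{secBeh} (isolated singularities with Milnor number $50$) show the situation arises in practice. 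Once flatness fails, your count $N=k+1$ collapses: there are extra $\psi$-exceptional divisors lying over points of $\PP^2$, and the fibral term $k$ in Wazir's formula no longer accounts for them.

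Your final paragraph shows you sense this is the crux, but you underestimate it: the difficulty is not careful bookkeeping over special fibres, it is that the model you posit may simply not exist. One could attempt to rescue the argument by dropping flatness and proving a version of Shioda--Tate--Wazir that also counts divisors contracted to points of the base; if done carefully the extra contributions on both sides should cancel, but this is genuine additional work and amounts to redoing Miranda's analysis in a different guise. I would also ask you to justify the negative definiteness of $(L\cdot E_j\cdot E_k)$ for the $\psi$-exceptional divisors --- it is true, but it is not as immediate in dimension three as your phrasing suggests.
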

One can easily show that the rank of the image of the cycle class map in $H^4(Y,\ZZ)$ is at 
least $1+\rank \MW(\pi)+1$. Hence  it follows from this theorem that a multiple of a Hodge class is algebraic.

The advantage of this theorem is that we can relate the computation of
$\MW(\pi)$ 
to a computation for a hypersurface in weighted projective space. The latter
problem 
is indeed doable as we will show in the second part of the paper.

The assumption that $H^4(Y,\QQ)$ has a pure weight 4 Hodge structure is very
weak.
We do not know of examples such that $H^4(Y,\QQ)$ does not  have a pure
weight
4 Hodge structure. Later on we will describe a large class of elliptic
threefolds for which we have a method to calculate $H^4(Y,\QQ)$. Each member
$Y$ of this class has a pure weight 4 Hodge structure on $H^4(Y,\QQ)$. 

For a complete proof we refer to Section~\ref{secComMW}. Here we only give a
sketch of the
proof: from \cite{MirEllThree} we get a factorization of the birational map
$Y\dasharrow X$. This factorization is sufficiently explicit to relate the
difference
$\rho(X)-\rho(S)$ to $H^{2,2}(H^4(Y,\CC)) \cap H^{4}(Y,\ZZ)$. The configuration
of singular fibers of $\pi$ is relatively easy to compute. Applying the
Shioda-Tate-Wazir formula then yields the proof.

If $X$ is chosen sufficiently general then $Y$ is quasismooth and hence a
$V$-manifold. 
Using this one can show that
$h^4(Y)=1$.
Theorem~\ref{thmCohRel} then implies $\rank \MW(\pi)=0$. For this reason we
shall 
focus in this paper on non-quasismooth hypersurfaces.
 
A more explicit form of the above remark is the following (see Corollary
\ref{corTrvMWTxt}):
 \begin{corollary}\label{corTrvMW} Let $\pi: X \to S$ be an elliptic threefold
associated with a hypersurface
defined by \[ y^2=x^3+Px+Q  \]
 with $P\in \CC[z_0,z_1,z_2]_{4n}$ and $Q\in \CC[z_0,z_1,z_2]_{6n}$, such that 
 \begin{enumerate}
 \item the curve $\Delta:4P^3+27Q^2=0$ is reduced with only
double points as singularities and $Q$ vanishes at each of these double points
or 
 \item  $P$ is identically zero  and  $Q=0$ defines a smooth curve of degree
$6n$
in $\PP^2$.
 \end{enumerate} 
Then $\rank \MW(\pi)=0$.
 \end{corollary}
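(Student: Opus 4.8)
The plan is to deduce both cases from Theorem~\ref{thmCohRel} by showing that in each case the Weierstrass hypersurface $Y\subset\PP:=\PP(2n,3n,1,1,1)$ is quasismooth. Granting that, $Y$ is a $V$-manifold, so $H^{4}(Y,\QQ)$ is pure of weight $4$ (hence the hypothesis of Theorem~\ref{thmCohRel} is met) and, by the Lefschetz theorem for quasismooth hypersurfaces together with Poincar\'e duality on $Y$, one gets $H^{4}(Y,\QQ)\cong H^{4}(\PP,\QQ)\cong\QQ$. A one-dimensional weight-$4$ Hodge structure with $h^{4,0}=h^{0,4}=0$ (automatic on a threefold) is of type $(2,2)$, so the free part of $H^{4}(Y,\ZZ)$, which has rank $1$, consists of Hodge classes; thus $\rank\bigl(H^{2,2}(H^{4}(Y,\CC))\cap H^{4}(Y,\ZZ)\bigr)=1$, and Theorem~\ref{thmCohRel} gives $\rank\MW(\pi)=0$.

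So the whole content is the quasismoothness of $Y$, i.e.\ the smoothness of the affine cone $\widehat Y=\{y^{2}-x^{3}-Px-Q=0\}\subset\CC^{5}$ away from the origin. A point $(x_{1},y_{1},\bw)$ of $\widehat Y$, with $\bw=(z_{0},z_{1},z_{2})$, can be a singular point of $\widehat Y$ only if $(x_{1},y_{1})$ is a singular point of the fibre curve $C_{\bw}=\{y^{2}=x^{3}+P(\bw)x+Q(\bw)\}$, since otherwise one of $\partial_{x},\partial_{y}$ is already nonzero there. In particular $C_{\bw}$ is a singular cubic, so its discriminant $4P(\bw)^{3}+27Q(\bw)^{2}$ vanishes; if $\bw=0$ this forces $(x_{1},y_{1})=(0,0)$, i.e.\ the excluded origin, so we may assume $\bw\neq0$ and $[\bw]\in\{4P^{3}+27Q^{2}=0\}\subset\PP^{2}$.

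The verification is now a case analysis according to the position of $[\bw]$. If $[\bw]$ is a smooth point of $\Delta$, then $P(\bw)\neq0$ (if $P(\bw)=0$ then $Q(\bw)=0$ and $\ord_{[\bw]}(4P^{3}+27Q^{2})\geq 2$, contradicting smoothness), $C_{\bw}$ is nodal with node $x_{0}=-3Q(\bw)/(2P(\bw))\neq0$, and from $P(\bw)=-3x_{0}^{2}$, $Q(\bw)=2x_{0}^{3}$ one gets $\nabla(4P^{3}+27Q^{2})(\bw)=108\,x_{0}^{3}\bigl(x_{0}\nabla P+\nabla Q\bigr)(\bw)$; the left side is nonzero and $x_{0}\neq0$, so the $\bw$-gradient $-\bigl(x_{0}\nabla P+\nabla Q\bigr)(\bw)$ of $y^{2}-x^{3}-Px-Q$ at $(x_{0},0,\bw)$ is nonzero, and $\widehat Y$ is smooth there. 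If $[\bw]$ is a double point of $\Delta$ — which happens only in case~(1) — the hypothesis $Q(\bw)=0$ forces $P(\bw)=0$, so $C_{\bw}$ is the cuspidal cubic $y^{2}=x^{3}$; writing $a=\ord_{[\bw]}P\geq1$, the equality $\ord_{[\bw]}(4P^{3}+27Q^{2})=2$ forces $\ord_{[\bw]}Q=1$, i.e.\ $\nabla Q(\bw)\neq0$, and the $\bw$-gradient of $y^{2}-x^{3}-Px-Q$ at the cusp $(0,0,\bw)$ equals $-\nabla Q(\bw)\neq0$, so $\widehat Y$ is again smooth. In case~(1), $\Delta$ has no worse singularities by hypothesis, so $Y$ is quasismooth. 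In case~(2) one has $P\equiv0$, $C_{\bw}$ is singular iff $Q(\bw)=0$, in which case $C_{\bw}=\{y^{2}=x^{3}\}$ and the $\bw$-gradient at its cusp is $-\nabla Q(\bw)\neq0$ because $\{Q=0\}$ is smooth. Hence $\widehat Y$ is smooth off the origin in both cases.

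The only obstacle worth naming is this last case analysis over the singular locus of $\Delta$; everything else is formal. It is precisely the hypothesis in~(1) that $Q$ vanish at the double points of $\Delta$ (equivalently, that these points carry no $I_{1}$-degeneration) which excludes a node of $\Delta$ with $P,Q\neq0$, over which $\widehat Y$ would be singular at the node of the fibre and $Y$ would fail to be quasismooth.
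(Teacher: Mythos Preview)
Your proof is correct and follows the same route as the paper's: show that under either hypothesis $Y$ is quasismooth, whence $Y$ is a $V$-manifold, so Lefschetz plus Poincar\'e duality give $h^{4}(Y)=h^{2}(Y)=1$, and Theorem~\ref{thmCohRel} yields $\rank\MW(\pi)=0$. The paper merely asserts that ``an easy calculation shows that our assumptions on $P$ and $Q$ are equivalent to $Y$ being quasismooth''; you have carried out this calculation explicitly and correctly, including the key observation that at a double point of $\Delta$ with $Q=0$ one must have $\ord Q=1$ (since $\ord(4P^{3})\geq 3$ forces $\ord(27Q^{2})=2$), which is exactly what makes the $\bw$-gradient nonvanishing at the cusp.
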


 Theorem~\ref{thmCohRel} implies the following  two results:
 if we call $\delta=h^4(Y)-1$ the defect of $Y$ then $\rank \MW(\pi)\leq
\delta$.  (The notion of defect for 
singular hypersurfaces is due to Clemens \cite{CleDS}.)
Moreover, one can show that  $\MW(\pi)\otimes \QQ$   is isomorphic to the group
of Weil Divisors on $Y$ 
modulo the Cartier Divisors tensored with $\QQ$. 
 
In the case of elliptic surfaces
$\psi: E\to \PP^1$ one has a theorem similar to Theorem~\ref{thmCohRel}.
However, we are not aware of any statement concerning elliptic surfaces similar
to  Corollary~\ref{corTrvMW}.
The reason for this is the following: let $T$ be a surface in weighted
projective space corresponding to $\psi$. The degree of $T$ is divisible by 6.
Set
$n=\deg(T)/6$. 
 One can show that $\rank \MW(\psi)= \rank (H^{1,1}(H^2(T,\CC)) \cap
H^2(T,\ZZ))-1$ and
$h^{2,0}(H^2(T,\CC))=n-1$. 
In this case, using Noether-Lefschetz theory, one can obtain  a series of
statements on the Mordell-Weil 
rank of a very general elliptic surface: e.g., one  obtains
statements on the Mordell-Weil rank for a very general degree $6n$ elliptic
surface,
and results on the dimension of the locus of elliptic surfaces with fixed
Mordell-Weil-rank \cite{CoxNL, KloNL}. 
However, if $n>1$ then $h^{2,0}(E)>0$ and hence it seems hard  to calculate 
$\rank
(H^{1,1}(H^2(E,\CC)) \cap H^2(E,\ZZ))-1$ in concrete examples. This is the key
obstruction for proving results similar to Corollary~\ref{corTrvMW}.

To calculate the rank of $\MW(\pi)$ we need to calculate the group $H^4(Y,\CC)$
together with its Hodge structure. If $Y$ has only isolated singularities and
all singularities are semi-weighted homogeneous  hypersurface singularities then
this
can be done by applying a method of Dimca \cite{DimBet}.
However, $Y$ might have non-isolated singularities. It turns out in our
situation that at a
general point of a one-dimensional component of $Y_{\sing}$ we have a
transversal  $ADE$ surface singularity. 
We extend Dimca's method to a class of hypersurfaces with non-isolated
singularities:

For the calculation of $H^4(Y,\CC)$ there is no reason to assume that the
hypersurface comes from an elliptic fibration,  i.e., at this stage we work in
the following context:
let $\PP=\PP(w_0,w_1,w_2,w_3,w_4)$ be a 4-dimensional weighted projective space
and set
$w=w_0+w_1+w_2+w_3+w_4$. We call a degree $d$ hypersurface $Y\subset \PP$
\emph{admissible} if $Y$ is defined 
by a weighted homogeneous
polynomial $f\in \CC[x_0,x_1,x_2,x_3,x_4]$, such that
\begin{enumerate}
\item $Y$ intersects $\PP_{\sing}$ transversally, i.e., if $\Sigma$ is the
locus where all the partials of $f$ vanish, then $\Sigma \cap
\PP_{\sing}=\emptyset$. ($Y$ will still have singularities along 
$\PP_{\sing}$, these arise from the construction of the weighted projective
space and
are finite quotient singularities.)
 \item $Y$ is smooth in codimension 1.
 \item In codimension 2 the threefold $Y$ has only transversal  $ADE$ surface
singularities.
 \item In codimension 3 all singularities are contact equivalent to a weighted 
 homogeneous hypersurface singularity (cf. Remark~\ref{remCont}).
\end{enumerate}

To formulate our theorem concerning the calculation of the cohomology groups we
have to introduce some notation:
we define $\cP$ as the set of all points $p\in \Sigma$, such that $(Y,p)$ is not
a transversal
 $ADE$ surface singularity.
Now let $f_p\in \CC[y_0,y_1,y_2,y_3]$ be such that $(f_p,0)$ is contact
equivalent to
$(Y,p)$, where $f_p$ is weighted homogeneous of degree $d_p$ and $w_p$ is the
sum
of
the weights.    In  particular, $f_p=0$ defines a surface in some weighted
projective 3-space. 

Let $R({f_p})$ be the Jacobian ring of $f_p$. If $(Y,p)$ is an isolated
singularity we set $\tilde{R}(f_p)=R(f_p)$. If $(Y,p)$ is not an isolated
singularity, then
$\tilde{R}(f_p)$ is defined as follows: the equation $f_p=0$ determines a
surface $S\subset\PP(v_0,v_1,v_2,v_3)$, which has finitely many
singularities $(S,q_1)$, $\dots$, $(S,q_t)$. Let $M_j$ be the Milnor-algebra of
$(S,q_j)$ and set $\mu:=\sum_j \dim M_j$ to be the total Milnor number. Let
$h_1,\dots,h_{\mu}$ be polynomials of degree $2d_p-w_p$, such that their image
under the natural (surjective) map $R(f_p)_{2d-w} \to \oplus_j M_j$ spans
$\oplus_j M_j$ and
set  $\tilde{R}(f_p)=R(f_p)/(h_1,\dots,h_\mu)$.

Using that $f_p=0$ is contact equivalent to $(Y,0)$ one obtains a natural map
$R(f)_{kd-w}\to
R(f_p)_{kd_p-w_p}$ for $k=1,2$.

The following theorem is a combination of Proposition~\ref{propLocCoh} and
several results from 
Section~\ref{secGlue}.
\begin{theorem}\label{singcohThm} Let $Y$ be an admissible hypersurface. Then
 \begin{eqnarray*} &H^1(Y,\QQ)=H^5(Y,\QQ)=0 \mbox{ and } \\& H^0(Y,\QQ)=\QQ,
H^2(Y,\QQ)=\QQ(-1), H^6(Y,\QQ)=\QQ(-3).\end{eqnarray*}  
The group
$H^4(Y,\QQ)$ has a pure weight $4$ Hodge structure, with vanishing
$h^{4,0}$ and $h^{0,4}$ and
\begin{eqnarray*}  h^{3,1}(H^4(Y,\CC))& =  &\dim \coker (R(f)_{d-w} \to
\oplus_{p\in \cP} \tilde{R}(f_p)_{d_p-w_p})\\
 h^{2,2}(H^4(Y,\CC)_{\prim})&=& \dim \coker (R(f)_{2d-w} \to \oplus_{p\in \cP}
\tilde{R}(f_p)_{2d_p-w_p}).
 \end{eqnarray*} 
\end{theorem}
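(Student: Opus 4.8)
The plan is to compute $H^\bullet(Y,\QQ)$ by combining a local analysis near the singular locus with a global Mayer–Vietoris / Gysin argument, then identifying the graded pieces with Jacobian-ring cokernels exactly as in Dimca's isolated-singularity theory. First I would reduce the cohomology of $Y$ to the cohomology of a smooth model $\tilde Y\to Y$ together with local contributions. Let $U=Y\setminus\cP$; by admissibility conditions (1)–(3), $U$ has only finite quotient singularities and transversal $ADE$ surface singularities along $Y_{\sing}\cap U$, so $U$ is a $V$-manifold away from a curve and its cohomology is governed by a simultaneous resolution of the transversal $ADE$ family. The key point here, which I expect to be established in Section~\ref{secGlue}, is that a transversal $ADE$ surface singularity contributes nothing new to $H^4$ beyond Tate classes coming from the exceptional divisors, and these are killed when one passes to primitive cohomology or are accounted for by the rank-$1$ terms $H^0,H^2,H^6$. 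So $H^1(U,\QQ)=0$, $H^2(U,\QQ)=\QQ(-1)$, etc., and the only interesting classes in $H^4(U,\QQ)_{\prim}$ come from the ambient hyperplane class and its multiples — i.e. $H^4(U,\QQ)_{\prim}=0$ in the relevant Hodge degrees, or more precisely the primitive part is concentrated so that $h^{4,0}=h^{0,4}=0$ and the Hodge structure is pure of weight $4$. The purity statement for $H^4(U,\QQ)$ follows because $U$ is a rational-homology manifold in the sense needed (quotient singularities and transversal $ADE$ are both $\QQ$-homology smoothings in the relevant degrees).

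Next I would run the local-to-global comparison. Choose a small neighborhood $\cU_p$ of each $p\in\cP$ and a neighborhood $\cU$ of $\Sigma$, and write the Mayer–Vietoris sequence for $Y=U\cup(\coprod_p \cU_p)$ (after thickening appropriately so the pieces are open). The local cohomology $H^\bullet(\cU_p)$ is computed via Proposition~\ref{propLocCoh}: since $(Y,p)$ is contact equivalent to the weighted-homogeneous singularity $(f_p,0)$, the local cohomology of the Milnor fiber, together with its monodromy-invariant part, is controlled by the graded pieces of $\tilde R(f_p)$ — the tilde being precisely the correction that in the non-isolated case one quotients the Jacobian ring by lifts of the Milnor algebras of the finitely many singular points of the projectivized surface $S\subset\PP(v_0,\dots,v_3)$. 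Splicing these into Mayer–Vietoris, the connecting maps $H^3(U)\oplus\bigoplus_p H^3(\cU_p)\to\bigoplus_p H^4$ of the links, or dually the restriction maps on $H^4$, become the maps $R(f)_{kd-w}\to\bigoplus_{p\in\cP}\tilde R(f_p)_{kd_p-w_p}$ once one identifies the graded pieces of the Jacobian ring of $f$ with the primitive cohomology of a smooth hypersurface of the same degree (Griffiths–Dolgachev residues) and the local graded pieces with the local vanishing cohomology. The cokernel of the global-to-local restriction is then exactly the "new" part of $H^4(Y)$, and tracking Hodge degrees — the shift by $d-w$ for the $(3,1)$-piece and by $2d-w$ for the $(2,2)$-piece — gives the two displayed formulas.

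The remaining low-degree vanishing and rank statements ($H^1=H^5=0$, $H^0=\QQ$, $H^2=\QQ(-1)$, $H^6=\QQ(-3)$) I would obtain by the same Mayer–Vietoris sequence in the cheap degrees: away from $H^4$ the local contributions vanish (the link of a weighted-homogeneous surface singularity is $(4-\dim)$-connected enough, and transversal $ADE$ is even better), so $H^i(Y)\cong H^i(U)$ for $i\ne 3,4$, and $H^i(U)$ is computed by Lefschetz for $V$-manifolds / the quasismooth ambient space. One must also check $H^3(Y,\QQ)=0$, which again comes down to the local links having no $H^2$ beyond Tate classes and $U$ having $H^3=0$. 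Purity of $H^4(Y,\QQ)$ is the one place to be careful: it follows once we know the weight filtration on $H^4(U)$ is trivial in weight $4$ (true since $U$ is a $\QQ$-homology manifold modulo a curve of $\QQ$-ADE singularities) and that the local contributions from $\cP$, being cokernels of Jacobian-ring maps that match up with the invariant vanishing cohomology of weighted-homogeneous singularities, carry pure Hodge structures by Steenbrink's theory.

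The main obstacle, as I see it, is the transversal-$ADE$ gluing: one must show precisely that a curve of transversal $ADE$ singularities, glued along its finitely many degenerate points in $\cP$, contributes to $H^4(Y,\QQ)$ only Tate classes that are already counted in the rank-one groups $H^2$ and $H^6$ (via the exceptional divisors of a resolution and their intersection pattern), so that the "interesting" part of $H^4$ — and in particular both $h^{3,1}$ and the primitive $h^{2,2}$ — is detected entirely at the points of $\cP$. Equivalently, the hard step is proving that the restriction map from $H^4(Y)$ to the product of the local cohomologies at $\cP$ is surjective onto the right target with kernel exactly the Tate part, which is where the definition of $\tilde R(f_p)$ (correcting for the non-isolated locus passing through $p$) is forced on us. This is the content I expect Section~\ref{secGlue} to carry, and Proposition~\ref{propLocCoh} supplies the purely local input.
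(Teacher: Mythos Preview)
Your proposal has the right shape but contains a genuine gap and one outright error.

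\textbf{The error.} You write ``One must also check $H^3(Y,\QQ)=0$.'' This is false in general and is not part of the theorem. The paper explicitly leaves $h^3(Y)$ undetermined (it is computed from the Euler characteristic once the other Betti numbers are known). Your argument for it --- that the local links have no interesting $H^2$ and that your $U$ has $H^3=0$ --- fails already for a smooth hypersurface of degree $18$ in $\PP(6,9,1,1,1)$, where $h^3=546$.

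\textbf{The gap.} The more serious problem is how you bring the Jacobian ring $R(f)$ into the picture. You define $U=Y\setminus\cP$, run Mayer--Vietoris for $Y=U\cup\bigl(\coprod_p\cU_p\bigr)$, and then say the connecting maps ``become the maps $R(f)_{kd-w}\to\bigoplus_p\tilde R(f_p)_{kd_p-w_p}$ once one identifies the graded pieces of the Jacobian ring of $f$ with the primitive cohomology of a smooth hypersurface of the same degree.'' But $Y$ is not smooth, $R(f)$ is not finite-dimensional, and the cohomology of a smooth hypersurface of the same degree plays no role. In the paper the symbol $U$ denotes $\PP\setminus Y$, the \emph{complement} of $Y$ in the ambient weighted projective space. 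The graded pieces $R(f)_{kd-w}$ surject onto the graded pieces of the polar filtration on $H^{n+1}(\PP\setminus Y,\CC)$ (Remark~\ref{remDeg} and Theorem~\ref{DD}); one then uses the Thom/residue isomorphism to land in $H^n(Y^*,\CC)$ with $Y^*=Y\setminus\Sigma$ (Proposition~\ref{prpPRsing}), and from there the exact sequence of the pair $(Y,Y^*)$ feeds into $H^4_\Sigma(Y)$. The cokernel identification (Theorem~\ref{thmCoKer}) is
\[
H^4(Y,\QQ)_{\prim}\;=\;\coker\bigl(H^4(\PP\setminus Y,\QQ)(1)\to\bigoplus_{p\in\cP}H^4_p(Y,\QQ)\bigr),
\]
and the Jacobian-ring map arises because $R(f)_{kd-w}$ surjects onto $\Gr_P^k H^{n+1}(\PP\setminus Y)$, not because of any comparison with a smooth model. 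Your Mayer--Vietoris with $U=Y\setminus\cP$ instead produces the sequence
\[
H^3(Y\setminus\cP)\to\bigoplus_p H^4_p(Y)\to H^4(Y)\to H^4(Y\setminus\cP)\to 0,
\]
and you have no mechanism to identify $H^3(Y\setminus\cP)$ or $H^4(Y\setminus\cP)$ with anything built from $R(f)$. In particular, $H^3(Y\setminus\cP)$ is large (it contains $H^3(Y)$), and the image of the first map is not visibly governed by $R(f)_{kd-w}$.

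\textbf{What to do instead.} Work with $Y^*=Y\setminus\Sigma$ rather than $Y\setminus\cP$, and use the pair $(Y,Y^*)$ together with the residue isomorphism $H^{n+1}(\PP\setminus Y)(1)\cong H^n(Y^*)_{\prim}$ (modulo the $H^2(\Sigma)$ correction in Proposition~\ref{prpPRsing}). The transversal-$ADE$ contribution along $\Sigma\setminus\cP$ is then handled by the local computation in Proposition~\ref{prpTransSurf} together with the Mayer--Vietoris gluing in Lemmas~\ref{lemGpt} and~\ref{lemLocH0}, which show that $H^4_{\Sigma_1}(Y)$ is controlled by $H^2(\Sigma_1)^*(-3)$ and hence carries no primitive information. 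The vanishing of $H^5(Y)$ is Lemma~\ref{lemH5van}; the remaining low-degree statements are Lefschetz (Proposition~\ref{prpLHT}).
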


Combining Theorems~\ref{thmCohRel} and~\ref{singcohThm} we obtain the following
(see also 
Section~\ref{secMethod})

\begin{theorem} Let $\pi: X \to S$ be an elliptic threefold, such that $S$ is a
rational surface, and the associated threefold $Y\subset \PP$ is admissible.
Assume that the map
\[ R(f)_{d-w} \to \oplus_{p\in \cP} \tilde{R}(f_p)_{d_p-w_p} \]
is surjective. Then
\[ \rank \MW(\pi) = \dim \coker (R(f)_{2d-w} \to \oplus_{p\in \cP}
\tilde{R}(f_p)_{2d_p-w_p}). \]
\end{theorem}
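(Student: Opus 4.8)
The plan is to combine Theorems~\ref{thmCohRel} and~\ref{singcohThm}, using the surjectivity hypothesis to pin down the Hodge type of $H^4(Y,\CC)$. Since $Y$ is admissible, Theorem~\ref{singcohThm} guarantees that $H^4(Y,\QQ)$ carries a pure weight $4$ Hodge structure, so the single hypothesis of Theorem~\ref{thmCohRel} is in force and gives
\[ \rank \MW(\pi) = \rank\bigl( H^{2,2}(H^4(Y,\CC)) \cap H^4(Y,\ZZ)\bigr) - 1 . \]
It therefore remains to show that the right-hand side equals $\dim\coker\bigl(R(f)_{2d-w}\to\oplus_{p\in\cP}\tilde R(f_p)_{2d_p-w_p}\bigr)$.

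First I would use the surjectivity of $R(f)_{d-w}\to\oplus_{p\in\cP}\tilde R(f_p)_{d_p-w_p}$. By the first formula of Theorem~\ref{singcohThm} this forces $h^{3,1}(H^4(Y,\CC))=0$, hence also $h^{1,3}(H^4(Y,\CC))=0$ by Hodge symmetry. Together with the vanishing $h^{4,0}(H^4(Y,\CC))=h^{0,4}(H^4(Y,\CC))=0$ from the same theorem, this shows that the (pure) Hodge structure on $H^4(Y,\CC)$ is concentrated in bidegree $(2,2)$. Consequently $H^{2,2}(H^4(Y,\CC))=H^4(Y,\CC)$, so $H^{2,2}(H^4(Y,\CC))\cap H^4(Y,\ZZ)$ equals the image of $H^4(Y,\ZZ)$ in $H^4(Y,\CC)$, a lattice of rank $h^4(Y)$; therefore $\rank\MW(\pi)=h^4(Y)-1$.

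The final step identifies $h^4(Y)-1$ with the desired cokernel. Because $Y$ is an admissible hypersurface in $\PP$ and $H^2(Y,\QQ)=\QQ(-1)$ by Theorem~\ref{singcohThm}, the restriction $h^2|_Y$ of the square of the ambient hyperplane class is a nonzero element of $H^4(Y,\QQ)$: it is represented by a general curve section of $Y$, which pairs nontrivially against the generator $h|_Y$ of $H^2(Y,\QQ)$. Hence $h^2|_Y$ spans a rank-one sub-Hodge structure $\QQ(-2)$ complementary to the primitive part, and $H^4(Y,\QQ)\cong\QQ(-2)\oplus H^4(Y,\QQ)_{\prim}$, so that $h^4(Y)=1+h^4(Y)_{\prim}$. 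As a quotient of the pure type-$(2,2)$ structure $H^4(Y,\QQ)$, the primitive part is again of pure type $(2,2)$, so the second formula of Theorem~\ref{singcohThm} gives
\[ h^4(Y)_{\prim}=h^{2,2}(H^4(Y,\CC)_{\prim})=\dim\coker\bigl(R(f)_{2d-w}\to\oplus_{p\in\cP}\tilde R(f_p)_{2d_p-w_p}\bigr). \]
Stringing these identities together yields $\rank\MW(\pi)=h^4(Y)-1=h^4(Y)_{\prim}=\dim\coker(\cdots)$, which is the claim.

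Once Theorems~\ref{thmCohRel} and~\ref{singcohThm} are available, this argument is essentially bookkeeping, so the only points I expect to need genuine care are: (i) confirming that the threefold ``associated'' to $\pi$ is exactly the minimal Weierstrass model $Y\subset\PP(2n,3n,1,1,1)$ to which both theorems apply, i.e.\ that being admissible is compatible with the factorization of $Y\dasharrow X$ used to prove Theorem~\ref{thmCohRel}; and (ii) the splitting $H^4(Y,\QQ)\cong\QQ(-2)\oplus H^4(Y,\QQ)_{\prim}$ accounting precisely for the summand $-1$, which relies on weak Lefschetz for the possibly non-quasismooth hypersurface $Y$ together with $H^2(Y,\QQ)=\QQ(-1)$ and a Poincar\'e-duality pairing. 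I would expect (ii) to be the step most in need of explicit justification; the Hodge-type vanishing, by contrast, is immediate from Theorem~\ref{singcohThm}.
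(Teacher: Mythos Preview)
Your proof is correct and follows essentially the same approach as the paper: the theorem is stated in the introduction as a direct combination of Theorems~\ref{thmCohRel} and~\ref{singcohThm}, and the bookkeeping you carry out (surjectivity $\Rightarrow h^{3,1}=0 \Rightarrow H^4(Y)$ pure of type $(2,2)$ $\Rightarrow \rank\MW(\pi)=h^{2,2}_{\prim}$) is exactly what the paper does in the proof of the Procedure in Section~\ref{secMethod}. Your cautionary remarks (i) and (ii) are reasonable, but neither is a genuine gap---(i) is built into the setup of Section~\ref{secSetup}, and (ii) is handled by the Lefschetz-type statement $H^2(Y,\QQ)=\QQ(-1)$ in Theorem~\ref{singcohThm} (equivalently Proposition~\ref{prpLHT}), together with the fact that cupping with the hyperplane class is nonzero on $H^2$.
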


\begin{remark}
The only non-zero Betti number that we have not determined so far is  $h^3(Y)$.
Usually, one is able to calculate $e(Y)$ and one can use this to determine
$h^3(Y)$.
\end{remark}

 \begin{remark}
If $Y$ is not admissible then our method
fails. In this case the first step would be to calculate the local
cohomology $H^i_p(Y,\QQ)$ of such a singularity. To our knowledge there is no
method
which works for a large class of such singularities.
\end{remark}

This theorem can be used to classify elliptic threefolds with small numerical
invariants.
In \cite{jconst} we classify the possibilities for $\MW(\pi)$ if $n=1$ and the
$j$-invariant of the fibers of $\pi$ is constant.

Our method is similar to Dimca's, but differs from recent methods such as work
by Cynk \cite{Cynk}, Rams \cite{Rams}, Grooten-Steenbrink \cite{GrSte},  and
the classical work of Clemens \cite{CleDS}, Werner \cite{Wer}, Schoen
\cite{Sch} and van Geemen-Werner \cite{vGW}. 
 
The differences between the methods of the papers quoted above
and ours are the following:
in all cases the method is applied to a smaller class of singularities, namely
in
the isolated singularity case Rams  deals with isolated $A_k,D_m,E_n$
singularities. In the non-isolated case, Grooten-Steen\-brink deal with
transversal $A_1$ singularities and singularities of the type $w^2=xyz$ and
$zw=x^2y$. The other papers deal with a subset of these singularities.

The restriction on the type of singularity (by Rams and by Grooten-Steen\-brink) 
implies that $(R_{f_p})_{d-w}=0$ for
all singularities they consider. 
 In particular, $H^4(Y,\QQ)$ is a pure $(2,2)$ Hodge
structure.
A second difference between our method and the above mentioned methods is, that both Rams and 
Grooten-Steenbrink express $H^{2,2}(H^4(Y,\CC))$ as a cokernel of a map between two vector spaces, which are of 
larger dimension than the vector spaces that occur in the sequel.

The organization of this paper is as follows. In Section~\ref{secSetup} we
recall some standard facts on elliptic fibrations over rational varieties.  In
Section~\ref{secBir} we discuss 
some results of Miranda from \cite{MirEllThree} that allow us to describe the
rational map $X
\dashrightarrow Y$. In Section~\ref{secComMW}
we give  proofs of Theorem~\ref{thmCohRel} and Corollary~\ref{corTrvMW}.
In Section~\ref{secCoh} we recall some standard results on the cohomology of
hypersurfaces $Y$ in weighted projective space. In the case of non-quasismooth
hypersurfaces we use the Poincar\'e residue map to calculate the cohomology of
the smooth part of $Y$.  In Sections~~\ref{secSur},~\ref{secLoc}
and~\ref{secGlue} we relate the 
cohomology of the smooth part of $Y$ and some local cohomology with the
cohomology of $Y$. This 
enables us to prove Theorem~\ref{singcohThm}. 
In Section~\ref{secMethod} we
summarize our method to calculate the Mordell-Weil group.
The remaining sections are devoted to applications of our method.
In Section~\ref{secEasy} we calculate the Mordell-Weil rank in an example with non-isolated singularities.
 In Section~\ref{secBeh} we calculate the Mordell-Weil rank of a
class of elliptic Calabi-Yau threefolds which were constructed by Hirzebruch. 
This calculation allows us to compute
all the Hodge numbers of these threefolds.

\part{Relation between the Mordell-Weil group and cohomology of singular
hypersurfaces}
\section{Set-up}\label{secSetup}

\begin{definition} An \emph{elliptic threefold} is a quadruple
$(X,S,\pi,\sigma_0)$, with $X$ a smooth projective threefold, $S$ a smooth
projective surface, $\pi:X\to S$ a flat morphism, such that the generic fiber is
a genus 1 curve and $\sigma_0$ is a section of $\pi$.

The \emph{Mordell-Weil group} of $\pi$, denoted by $\MW(\pi)$, is the group of 
rational sections $\sigma: S \dashrightarrow  X$ with identity element
$\sigma_0$.
\end{definition}

Recall that  a morphism $\pi:X \to S$ (with $X$ a
smooth projective threefold and $S$ a smooth projective surface) is flat if and
only if all fibers have dimension one.
Clearly $\MW(\pi)$ is a birational invariant, in the sense that if $\pi_i:
X_i\to S_i$, $i=1,2$ are elliptic threefolds with zero-sections $\sigma_0$ and $\sigma_0'$ such that there exist an birational
isomorphism $\psi: X_1\stackrel{\sim}{\dashrightarrow} X_2$ mapping the general
fiber of $\pi_1$ to the general fiber of $\pi_2$ and such that $\psi\circ \sigma_0=\sigma_0'$ then $\psi^*: \MW(\pi_2) \to
\MW(\pi_1)$ is well-defined and is an isomorphism. Moreover, the rank of $\MW(\pi)$ is also stable under base-change by a birational morphism on the base surface.

The following technical definition will be needed
\begin{definition} Let $\pi: X \to S$ be an elliptic threefold. An effective
divisor $D\subset X$ is called \emph{fibral} if $\pi(D)\subset S$ is a curve.
\end{definition}

We shall frequently make use of the following fundamental result:
\begin{theorem}[{Shioda-Tate-Wazir, \cite[Theorem 4.2]{Waz}}]\label{thmSTW}
Let $\pi :X \to S$ be an elliptic threefold then
\[ \rho(X)=\rho(S)+f+\rank \MW(\pi)+1\]
where $f$ is  the number
of 
irreducible surfaces $F$ in $X$ such that
$\pi(F)$ is a curve, and $F\cap \sigma_0(S)=\emptyset$.
\end{theorem}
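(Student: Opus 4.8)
The plan is to reconstruct $\NS(X)\otimes\QQ$ from its restriction to the generic fibre $E=X_\eta$, which is an elliptic curve over the function field $K=\CC(S)$ carrying the $K$-rational point $O$ induced by $\sigma_0$. Writing $V\subset\NS(X)\otimes\QQ$ for the subspace spanned by the classes of all fibral divisors, the engine of the argument is the localization sequence for divisor classes: every divisor on $E$ extends to a divisor on $X$, and two extensions differ by a divisor supported over a curve in $S$. This yields a surjection $r:\NS(X)\otimes\QQ\to\Pic(E)\otimes\QQ$ with kernel $V$. The theorem then splits into the two rank computations $\rank V=\rho(S)+f$ and $\rank(\NS(X)\otimes\QQ/V)=\rank(\Pic(E)\otimes\QQ)=1+\rank\MW(\pi)$, whose sum is the asserted formula.

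For the quotient I would use that the section $O$ trivializes the degree map, giving a splitting $\Pic(E)\cong\ZZ[O]\oplus\Pic^0(E)$ together with the identification $\Pic^0(E)=E(K)=\MW(\pi)$ via $P\mapsto[P]-[O]$. The class $[O]$, i.e.\ the restriction of $\sigma_0(S)$, accounts for the summand $\ZZ$ and hence for the ``$+1$''; each rational section $\sigma$ extends to a divisor $\overline{\sigma(S)}$ restricting to $[P_\sigma]$, so $r$ is surjective and realizes $\MW(\pi)\otimes\QQ$ as a summand of $\NS(X)\otimes\QQ/V$. That $\pi^{*}\NS(S)\otimes\QQ\hookrightarrow V$ is injective of rank $\rho(S)$ follows at once from $\sigma_0^{*}\pi^{*}=\mathrm{id}$, and since $\sigma_0(S)$ is a section it is not fibral, so the ``$+1$'' is genuinely disjoint from $V$.

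The heart of the matter is $\rank V=\rho(S)+f$. Over each irreducible curve $C\subset S$ the total fibre satisfies $\pi^{*}C=\sum_i a_iF_i$ with $a_i\ge 1$, where exactly one component $F_0$ meets $\sigma_0(S)$; this relation expresses every identity component modulo $\pi^{*}\NS(S)\otimes\QQ$ in terms of the non-identity ones, so $V=\pi^{*}\NS(S)\otimes\QQ+\sum_{F\cap\sigma_0(S)=\emptyset}\QQ\,[F]$ and $\rank V\le\rho(S)+f$. For the reverse inequality I would show the $f$ non-identity fibral components are linearly independent modulo $\pi^{*}\NS(S)\otimes\QQ$ by restricting to a general complete-intersection curve $B\subset S$: the surface $X_B=\pi^{-1}(B)\to B$ is an elliptic surface, the components $F$ restrict to non-identity components of its reducible fibres, and $\pi^{*}\NS(S)$ restricts into $\pi_B^{*}\NS(B)$; the classical Shioda--Tate theorem on $X_B$ gives their independence downstairs, which forces independence upstairs.

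I expect the main obstacle to be reconciling the Picard-group localization sequence with the Néron--Severi formulation, i.e.\ controlling the contribution of $\Pic^{0}(X)$: one must check that the non-torsion part of $\MW(\pi)$ genuinely survives in $\NS(X)$ and is not absorbed into algebraic equivalence, equivalently that the image of $\Pic^{0}(X)$ in $\Pic(E)$ meets $\Pic^{0}(E)=\MW(\pi)$ only in a controlled way. In the setting of this paper this is harmless, since $S$ rational forces $q(X)=0$ (via the Leray sequence and the vanishing of $H^{0}(S,R^{1}\pi_{*}\cO_X)$), whence $\Pic^{0}(X)=0$ and the rank bookkeeping above is literally valid; for the fully general statement one compares $\Pic^{0}(X)$ with $\Pic^{0}(E)$ after a semistable base change. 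A secondary technical point is the exactness of the localization sequence in the limit over shrinking opens of $S$, which is standard.
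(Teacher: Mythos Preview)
The paper does not prove this theorem at all: it is quoted verbatim from \cite[Theorem~4.2]{Waz} and used as a black box. So there is no ``paper's own proof'' to compare your proposal against.

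Your sketch is essentially the standard argument and is correct in outline. The restriction map $r:\NS(X)\otimes\QQ\to\Pic(E)\otimes\QQ$ with kernel the fibral subspace $V$, the identification $\Pic(E)\otimes\QQ\cong\QQ\oplus(\MW(\pi)\otimes\QQ)$, and the decomposition $V=\pi^{*}\NS(S)\otimes\QQ+\sum_{F}\QQ[F]$ are exactly the ingredients Wazir uses. Your reduction of the independence of the non-identity fibral components to the surface case via a Bertini curve $B\subset S$ is also the right idea; just be sure to choose $B$ ample and general so that $X_B$ is smooth, $B$ meets every curve $\pi(F)$, and distinct curves $\pi(F)$, $\pi(F')$ meet $B$ in disjoint sets of points. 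Your remark on $\Pic^{0}(X)$ is well placed: in the paper's setting $S$ is rational and $X$ is not birational to $E\times S'$, which forces $q(X)=0$, so the bookkeeping is literally valid; Wazir handles the general case directly.
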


Using Lefschetz' (1,1) theorem and Poincar\'e duality we can rephrase the
Shioda-Tate-Wazir formula as 
\[ \rank \MW(\pi) = \rank H^{2,2}(X,\CC) \cap H^4(X,\ZZ)-f-\rho(S)-1.\]
In general this is hard to compute. Theorem~\ref{thmCohRel} says that the
analogous formula also holds if we replace $X$ by a minimal (singular)
Weierstrass model. In this case one has tools to compute the right hand side.

We shall now describe in some detail how to associate to
an elliptic threefold $\pi: X \to S$ a hypersurface in weighted projective
$4$-space.
Here we restrict
ourselves to the case where $S$ is a rational surface.  In this case we can find
a hypersurface $Y$ of degree $6n$ in $\PP(2n,3n,1,1,1)$ which is birational to
$X$ as follows:
the morphism $\pi$ establishes $\CC(X)$ as a field extension of
$\CC(S)=\CC(z_1,z_2)$.
The field $\CC(X)$ is the function field of an elliptic curve over
$\CC(z_1,z_2)$,
i.e., $\CC(X)=\CC(x,y,z_1,z_2)$ where \begin{equation}\label{eqnWei}
y^2=x^3+f_1(z_1,z_2)x+f_2(z_1,z_2)\end{equation} with $f_1,f_2\in
\CC(z_1,z_2)$.
Without loss of generality we may assume that (\ref{eqnWei}) is a global minimal
Weierstrass equation, i.e.,  $f_1,f_2$ are polynomials and there is no
polynomial $g\in \CC[z_1,z_2]$ such that $g^4$ divides $f_1$ and $g^6$ divides
$f_2$. 
 
To obtain a hypersurface in $\PP(2n,3n,1,1,1)$ we need to find a weighted
homogeneous polynomial. Let $n=\lceil \max  \{\deg(f_1)/4,\deg(f_2)/6\} \rceil$ 
and define $P$ and $Q$ as the polynomials 
 \[P=z_0^{4n} f_1(z_1/z_0,z_2/z_0), \quad Q=z_0^{6n} f_2(z_1/z_0,z_2/z_0).\]
  Then
 \[ y^2=x^3+P(z_0,z_1,z_2)x+Q(z_0,z_1,z_2)\]
 defines a hypersurface $Y$ of degree $6n$ in $\PP:=\PP(2n,3n,1,1,1)$. Let
$\Sigma$ be the locus 
where all the partial derivatives of the defining equation vanish.
Consider the projection $\tilde{\psi}: \PP(2n,3n,1,1,1) \dashrightarrow \PP^2$
with center
$L=\{z_0=z_1=z_2=0\}$ and its restriction $\psi=\tilde{\psi}|_Y$ to $Y$. 
Then there exists a diagram
\[
 \xymatrix{X \ar@{-->}[r] \ar[d]^\pi & Y\ar@{-->}[d]^\psi \\ S \ar@{-->}[r]&
\PP^ 2.  }
 \] 

Note that $Y\cap L=\{(1:1:0:0:0)\}$. 
If $n=1$ then $\PP_{\sing}$ consists of two points, none of which lie on
$Y$. If $n>1$ then  an easy calculation in local coordinates shows that
$\PP_{\sing}$
is precisely $L$, that $\Sigma$ and $L$ are disjoint  and that $Y$ has an
isolated singularity at 
$(1:1:0:0:0)$. 
For any $n$ we have that $\psi$ is not defined at $(1:1:0:0:0)$. 
Let $\tilde{\PP}$ be the blow-up of $\PP$ along $L$.
Let $X_0$ be the strict transform of $Y$ in $\tilde{\PP}$. An easy calculation
in local coordinates shows that $X_0\to Y$ resolves the singularity of $Y$ at
$(1:1:0:0:0)$ and that the induced map $\pi_0: X_0 \to S_0$ with
$S_0= \PP^2$ is a morphism. Moreover, all fibers of $\pi_0$ are irreducible
curves.

\section{Miranda's construction}\label{secBir}

The threefolds $X_0$ and $X$ are birational and one might therefore ask for a 
precise sequence of
birational morphisms relating $X_0$ and $X$. This question might be too hard. A
slightly weaker problem is solved by Miranda: starting with $\pi_0:X_0\to S_0$
Miranda \cite{MirEllThree} produces a smooth elliptic threefold $\pi': X' \to
S'$
birational to $\pi$.
Actually,  Miranda produces a series $\{\pi_i:X_i\to S_i \}$ where
$\{\pi_{i+1}:X_{i+1} \to S_{i+1} \}$ can be obtained from $\{\pi_i:X_i\to S_i
\}$ by
applying one of the following three types of birational transformations:

\begin{enumerate}
\item $S_{i+1}$ is the blow-up of $S_{i}$ in a point $p$ of the discriminant
curve of $\pi$, i.e., 
with $\pi_i^{-1}(p)$ a singular curve. Then  we define $X_{i+1}$ as the fiber
product of 
$X_{i}$ with $S_{i+1}$ over
$S_{i}$:
\[ \xymatrix{X_{i+1}:=X_{i} \times_{S_{i}} S_{i+1} \ar[d] \ar[r] & X_{i}\ar[d] 
\\S_{i+1}:=\Bl_p S_{i}  \ar[r] & S_{i}.\ }\]

This procedure is applied in the following two cases
\begin{enumerate}
\item To simplify the geometry:  let $\Delta_{i}\subset S_{i}$ be the (reduced)
discriminant curve of $\pi_i$. After applying this procedure sufficiently many
times, we may assume that each irreducible component of $\Delta_i$ is smooth,
and that $\Delta_i$ has only ordinary double points as singularities.
\item Suppose $X_{i}$ has  an isolated singularity in the fiber of $p\in S_{i}$.
Blowing up this singularity would yield a non-flat morphism. Instead, if we
apply this base change procedure we get a curve of singular points in
$X_{i+1}$. 
\end{enumerate}

\item Even when we start with a minimal local equation, we might obtain a
non-minimal
equation, i.e.,  it might happen that  $X_{i}$ has, in one of its charts, a
local
equation of the form by 
$y^2=x^3+u^4f_1x+u^6 f_2$,
where $f_1,f_2\in \CC[z_0,z_1]$ and $u\in \CC[z_0,z_1]\setminus \CC$ is
irreducible. 
In this chart the elliptic fibration is given by $(x,y,z_0,z_1) \mapsto
(z_0,z_1)$, which can be interpreted as projection 
onto the plane $x=y=0$.
Note that  after
applying the first operation sufficiently many times, we can assume that
$x=y=u=0$
is a smooth irreducible curve. 
We need to get rid of the factor $u^4$ and $u^6$ in the
equation, which can be done as follows:
\begin{enumerate}
\item Blow up $C_i: x=y=u=0$, yielding a threefold $X_{i+1}$ with local
equation 
$y^2=ux^3+u^3f_1 x+u^4 f_2$ in one of the charts. An easy calculation shows that
in the other two 
``new'' charts we have that $X_{i+1}$ is smooth.
\item Blow up $C_{i+1}: x=y=u=0$, yielding a (non-normal) threefold $X_{i+2}$
with local
equation $y^2=u^2x^3+u^2f_1 x+u^2 f_2$ in one of the charts.
\item Blow up the surface $R_{i+2}: u=y=0$, yielding a threefold $X_{i+3}$ with
local
equation $y^2=x^3+f_1x+f_2$ in one of the charts.
\item If we patch all the local charts together, we see that the fiber over a
point in $\{u=0\}$ is a reducible curve, consisting of two rational curves and
one
elliptic curve. Actually $\pi^{-1}_{i+3}(\{u=0\})$ consists of three irreducible
components, two of them are ruled surfaces over $C: \{u=0\}$, the third is an
elliptic
surface. We can contract the two ruled surfaces, obtaining $X_{i+5}$.

An easy calculation in local coordinates shows that both $X_{i+3}\to X_{i+4}$
and $X_{i+4}\to X_{i+5}$ are blow-ups with center a smooth curve contained in
the smooth 
locus.
\end{enumerate}
The base surface remains unchanged, i.e., $S_i=S_{i+1}=\dots =S_{i+5}$. The
geometric construction is summarized in the following table: 
\[
\begin{array}{ccc}
\mbox{Threefold} & \mbox{Singular locus} & \mbox{Important divisor }\\
X_i           & C_i \mbox{ (curve)} &   F_i =\pi_{i}^{-1}(\{u=0\}) \\
X_{i+1}=\Bl_{C_{i}} (X_i) & C_{i+1} \mbox{ (curve)} &
E_{i+1}/C=\PP^1-\mbox{bdle}.\\
X_{i+2}=\Bl_{C_{i+1}} (X_{i+1}) & R_{i+2}=E_{i+2} \mbox{ (surface)} &  
E_{i+2} /C=\PP^1-\mbox{bdle}.\\
X_{i+3}=\Bl_{R_{i+2}} (X_{i+2}) & \emptyset &  E_{i+3}=\mbox{elliptic surface}\\
&& \mbox{double cover of } E_{i+2} \\
X_{i+4} = \CCon_{E_{i+1}}(X_{i+3}) &&\\
X_{i+5} = \CCon_{F_i}(X_{i+4}) &&\\
\end{array}
\]
When we contract $E_{i+1},F_i$ we mean that we contract the strict transform of
$E_{i+1},F_i$.

\item To resolve  singularities: $X_{i+1}$ is obtained by blowing up a  curve
$C$
inside the singular locus of $X_{i}$ such that $C_{\red}$ is smooth. Set
$S_{i+1}=S_{i}$ and $\pi_{i+1}$ to be the composition $X_{i+1}\to
X_{i}\stackrel{\pi_{i}}{\to} S_{i}$.

Note that by using the defining equation one can show that at a general point of
$C_{\red}$
one has a transversal $ADE$ surface singularity.
 \end{enumerate}

These three steps should be applied in the following order:

\begin{enumerate}
\item Apply step 1, to obtain a fibration with nice properties: i.e., 
repeat step 1 until
$\Delta_{i,\red}\subset S_i$ has at most nodes as singularities and  
the $j$-function $j: 
S_i\dashrightarrow \PP^1$  is a morphism.

At this stage we obtain a Weierstrass{} fibration i.e., there exists a line
bundle $\cL_i$ on $S_i$ and 
sections $A \in H^0(S_i,\cL_i^{\otimes 4}), B\in H^0(S_i,\cL_i^{\otimes 6})$
such that $X_i = \{ Y^2Z=X^3
+AXZ^2+BZ^3\} \subset \PP(\cO \oplus \cL_i^{-2} \oplus \cL_i^{-3})$. We can
consider $A=0$ and $B=0$ as curves inside $S_i$. Repeat step 1 until the reduced
curves underlying $A=0$ and $B=0$ have at most ordinary double points as
singularities.

\item Apply step 2, until there is no curve $C\subset S_i$ such that $A$
vanishes along $C$ with order at 
least 4, and $B$ vanishes along $C$ with order at least 6.

\item \label{repeat} Apply step 3, until $X_i$ has only isolated singularities
or is smooth. If $X_i$ is 
smooth then stop. 

\item Apply step 1 for each of the isolated singularities of $X_i$.  
The outcome of this is a threefold whose singular locus consist of finitely many
smooth irreducible 
curves which are all disjoint.

\item If necessary  apply step 2.
\item Go to point (\ref{repeat}).
\end{enumerate}

{}From this description it is not at all clear  why this procedure should
terminate. For this fact we refer to 
\cite{MirEllThree}.

\begin{remark} Miranda uses a slightly different order and he uses a fourth type
of modification, namely 
the
contraction of $\PP^1\times \PP^1$ to a $\PP^1$. We indicate now why this does 
not influence the termination of this procedure.
  
  The extra modification is applied if
$X_{i}$ has an isolated 
$A_1$ singularity at $p\in X_i$. We can then first blow up $X_i$ in $p$. The
exceptional divisor $E$ is 
isomorphic  to $\PP^1\times \PP^1$. The morphism $\pi_{i+1}: X_{i+1}\to
S_{i+1}=S_i$ has a fiber with a 
two-dimensional component, contradicting flatness. This can be resolved by
contracting $E$ to $\PP^1$, 
a so-called ``small resolution". 
The problem is that the space  $X_{i+2}$ obtained in this way is a priori only
an algebraic space, rather 
than an 
algebraic variety. To determine whether $X_{i+2}$ is actually an algebraic
variety one needs to consider 
the global geometry of $X_{i+2}$.

To avoid this problem  we choose a different procedure: namely we blow up
$S_{i}$ in $\pi_i(p)$ and 
then base change. The threefold $X_{i+1}$ now has a curve  $C$ of singularities.
Then we blow up $C$ 
and obtain a threefold $X_{i+2}$. A direct calculation in local coordinates 
shows that $X_{i+2}$ is 
smooth in a neighborhood of the exceptional divisor of $X_{i+2}\to X_{i+1}$. We
give a sketch of this 
calculation:
in local coordinates $(X_i,p)$  is given by
$t_1^2+t_2^2+t_3^2+t_4^2=0$. 
If we use the base change procedure, we obtain a curve $C\subset X_{i+1}$ of
singularities. A 
straightforward 
calculation shows that at a general point of $C$ we have a local equation of the
form
$s_1^2+s_2^2+s_3^2=0$, i.e., we have a transversal $A_1$ surface singularity,
except for two points on 
$C$ 
where we have a local equation of the form $s_1^2+s_2^2+s_4s_3^2=0$ (a so-called
pinch point). 
Here $C$ is given by the equation $s_1=s_2=s_3=0$.

Following the above algorithm, we now need to blow up $C$. A calculation in
local coordinates shows 
that the threefold $X_{i+2}$ obtained in this way is smooth in a neighborhood of
the exceptional divisor.

In order to show that our procedure terminates, note that one could follow
Miranda's algorithm until one has only isolated $A_1$-singularities left.
It is clear that the above procedure then resolves all the remaining
singularities.
\end{remark}

\section{Comparing Mordell-Weil ranks}\label{secComMW} 

Starting with an elliptic threefold $\pi:X \to S$ we found a hypersurface
$Y\subset \PP(2n,3n,1,1,1)$. Applying Miranda's construction to $Y$ gives us an 
elliptic threefold $\pi': X'\to S'$.  We now want to express 
$\rank \MW(\pi)=\rank \MW(\pi')$ in terms of invariants of $Y$. For this we use
the
following result:
\begin{theorem} \label{thmMV} Let $V$ and $\tilde{V}$ be complex varieties. 
Let $\varphi: \tilde{V} \to V$ be a proper
birational morphism. Let $\cZ\subset V$ be a closed subvariety such that
$\varphi$ restricted to $\tilde{V}\setminus \pi^{-1}(\cZ)$ is injective. Set
$E:=\pi^{-1}(\cZ)$. Then there is an exact sequence of Mixed Hodge structures
\[ \dots \to H^{i-1}(E,\QQ) \to H^i(V,\QQ) \to H^i(\tilde{V},\QQ)\oplus
H^i(\cZ,\QQ)\to H^i(E,\QQ)  \to \dots.\]
 \end{theorem}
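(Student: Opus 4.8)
The statement is a standard Mayer--Vietoris type sequence for mixed Hodge structures, so the plan is to reduce it to a known tool rather than to build everything from scratch. The key geometric input is the cartesian square
\[
\xymatrix{E \ar[r] \ar[d] & \tilde V \ar[d]^\varphi \\ \cZ \ar[r] & V}
\]
where the horizontal maps are closed immersions and $\varphi$ is proper. First I would invoke the fact that for such a square, with $\varphi$ proper and an isomorphism over $V\setminus\cZ$, there is a long exact sequence in cohomology
\[
\dots \to H^i(V,\QQ) \to H^i(\tilde V,\QQ)\oplus H^i(\cZ,\QQ) \to H^i(E,\QQ) \to H^{i+1}(V,\QQ)\to\dots,
\]
and that this sequence is compatible with mixed Hodge structures. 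The cleanest route is the descent (or ``abstract blow-up'') exact triangle: properness of $\varphi$ together with $\varphi$ being an isomorphism away from $\cZ$ gives that $\QQ_V \to R\varphi_*\QQ_{\tilde V} \oplus (i_\cZ)_*\QQ_\cZ \to (i_E)_*\QQ_E$ is a distinguished triangle in the derived category of sheaves on $V$ (this is the sheaf-level statement underlying cohomological descent for proper hypercoverings, see e.g.\ Peters--Steenbrink \cite{PS} or Deligne's Hodge III). Taking hypercohomology over $V$ yields the long exact sequence in the statement.

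Second, I would address the Hodge-theoretic upgrade: every map in the sequence must be a morphism of mixed Hodge structures. For this I would appeal to the functoriality of the MHS on cohomology of complex varieties, together with the fact that the connecting maps in the descent triangle are induced by maps of varieties (restriction maps and the proper pushforward $\varphi^*$), all of which are known to be MHS morphisms. Concretely, one realizes the triangle at the level of mixed Hodge complexes: take the mixed Hodge complex computing $H^*(V)$, the one for $H^*(\tilde V)$, and the ones for $\cZ$ and $E$, and check that the descent triangle lifts to a distinguished triangle of mixed Hodge complexes. The existence of such compatible mixed Hodge complexes is precisely what the machinery in \cite{PS} provides, so this step is bookkeeping once that machinery is cited. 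An essentially equivalent and perhaps quicker alternative: reduce to the case of a genuine abstract blow-up by replacing $\tilde V$, $\cZ$, $E$ with simplicial resolutions, and use that the Mayer--Vietoris spectral sequence of the associated hypercovering degenerates to the desired four-term exact sequence, all within the category of mixed Hodge structures.

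I expect the main obstacle to be not the construction of the sequence per se, but the compatibility with \emph{mixed} Hodge structures when the varieties involved are singular or non-proper — here $V$ (namely $Y$), $\tilde V$ (a resolution), $\cZ$ and $E$ may all be singular, so one cannot use the classical pure-Hodge Mayer--Vietoris. The resolution of this obstacle is exactly to work with mixed Hodge complexes / cohomological descent rather than with Hodge structures on individual cohomology groups, invoking the general functoriality results. Since the excerpt explicitly thanks Peters and Steenbrink for a preview of \cite{PS}, I would cite the relevant statement there (abstract blow-up sequence for mixed Hodge structures) and keep the proof of Theorem~\ref{thmMV} to a few lines: identify the square, quote the descent triangle, take hypercohomology, and note MHS-compatibility.
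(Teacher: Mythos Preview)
Your proposal is correct and in fact goes further than the paper does: the paper's entire proof of Theorem~\ref{thmMV} is the single line ``See \cite[Corollary 5.37]{PS}.'' Your sketch of the descent triangle and MHS-compatibility is exactly the content behind that citation, so you have anticipated both the result quoted and the reason it holds.
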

\begin{proof}
See \cite[Corollary 5.37]{PS}.
\end{proof}

\begin{lemma} \label{lemBlowUp}
Let $V$ be a threefold, $C\subset V$  be a smooth curve contained in the smooth
locus of $V$. 
Let $\tilde{V}$ be the blow-up of $V$ along $C$, let $E$ be the exceptional
divisor and $\iota:E\to \tilde{V}$ be the inclusion.
Then
\[ \iota^*: H^3(\tilde{V},\QQ)\to H^3(E,\QQ)\]
is surjective.
\end{lemma}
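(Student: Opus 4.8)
The plan is to combine the $\PP^1$-bundle structure of the exceptional divisor with the Gysin sequence of $E\subset\tilde V$, which reduces the surjectivity of $\iota^*$ to the elementary fact that, in the relevant degree, cup-product with the tautological class of $E$ is an isomorphism. First I would record the geometry of $E$: since $V$ is smooth along the smooth curve $C$, the exceptional divisor is $E=\PP(N_{C/V})$, the projectivisation of the rank-two normal bundle, and $p:=\varphi|_E\colon E\to C$ is a $\PP^1$-bundle over the smooth projective curve $C$. Put $\xi:=c_1\bigl(\cO_{\tilde V}(E)|_E\bigr)\in H^2(E,\QQ)$; since $\cO_{\tilde V}(E)|_E$ restricts to $\cO_{\PP^1}(-1)$ on every fibre of $p$, the class $\xi$ restricts to a generator of $H^2$ of each fibre, so by Leray--Hirsch $H^\ast(E,\QQ)$ is a free $H^\ast(C,\QQ)$-module with basis $\{1,\xi\}$. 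As $C$ is a curve, $H^3(C,\QQ)=0$, and therefore $H^1(E,\QQ)=p^\ast H^1(C,\QQ)$ while $H^3(E,\QQ)=\xi\cup p^\ast H^1(C,\QQ)$; in particular cup-product with $\xi$ is an \emph{isomorphism} $H^1(E,\QQ)\xrightarrow{\sim}H^3(E,\QQ)$.

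Next I would set up the Gysin sequence of the smooth divisor $E$ in $\tilde V$. Although $\tilde V$ (like $V$) may be singular away from $E$, a neighbourhood of $E$ in $\tilde V$ is smooth, precisely because $C$ is contained in the smooth locus of $V$; excising to such a neighbourhood, the Thom isomorphism for the line bundle $N_{E/\tilde V}$ yields $H^k_E(\tilde V,\QQ)\cong H^{k-2}(E,\QQ)(-1)$ and hence a Gysin map $\iota_\ast\colon H^{k-2}(E,\QQ)\to H^k(\tilde V,\QQ)$ fitting into the long exact sequence
\[ \cdots\to H^{k-2}(E,\QQ)\xrightarrow{\iota_\ast}H^k(\tilde V,\QQ)\to H^k(\tilde V\setminus E,\QQ)\to H^{k-1}(E,\QQ)\to\cdots. \]
Localising near $E$, where $\tilde V$ is smooth and $\xi=c_1(N_{E/\tilde V})$, the self-intersection formula $\iota^\ast\circ\iota_\ast=(\,\cdot\cup\xi\,)$ holds verbatim. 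This is the step I expect to be the only real obstacle: one must resist invoking Poincar\'e duality or the classical blow-up cohomology formula on the possibly singular $\tilde V$, and instead carry out everything in the smooth neighbourhood of $E$ --- which is exactly where the hypothesis $C\subset V_{\mathrm{sm}}$ is used.

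Finally I would conclude: given any class $c\in H^3(E,\QQ)$, the first step lets me write $c=\xi\cup p^\ast\alpha$ with $\alpha\in H^1(C,\QQ)$, and then $c=\iota^\ast\bigl(\iota_\ast(p^\ast\alpha)\bigr)$ by the self-intersection formula, so $c\in\operatorname{Im}\iota^\ast$; hence $\iota^\ast$ is surjective. Equivalently, $\iota^\ast$ already surjects onto $H^3(E,\QQ)$ when restricted to the image of $\iota_\ast\colon H^1(E,\QQ)\to H^3(\tilde V,\QQ)$. As an alternative, one could feed the blow-up square $(E\subset\tilde V)\to(C\subset V)$ into Theorem~\ref{thmMV}: since $H^3(C,\QQ)=0$, the resulting exact sequence identifies the surjectivity of $\iota^\ast$ with the vanishing of the connecting homomorphism $H^3(E,\QQ)\to H^4(V,\QQ)$; but pinning down that map seems less transparent than the Gysin computation above.
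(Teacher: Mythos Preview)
Your argument is correct and takes a genuinely different route from the paper's proof. The paper first resolves the singularities of $V$ by $\psi\colon V_1\to V$, blows up the copy of $C$ in $V_1$ to get $\tilde V_1$, and then applies Theorem~\ref{thmMV} to $\tilde V_1\to V_1$: since both $E$ and $V_1$ are smooth, $H^3(E,\QQ)$ has pure weight~$3$ and $H^4(V_1,\QQ)$ has pure weight~$4$, so the connecting map vanishes and $H^3(\tilde V_1,\QQ)\to H^3(E,\QQ)$ is surjective. A second application of Theorem~\ref{thmMV}, this time to $\tilde V_1\to V$, shows that the connecting map $H^3(E,\QQ)\to H^4(V,\QQ)$ is zero; plugging this into the sequence for $\tilde V\to V$ (the very alternative you sketch at the end) gives the surjectivity of $\iota^*$. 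So the paper in effect carries out your ``less transparent'' alternative by passing through a resolution and invoking weight purity.

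Your approach is more elementary and self-contained: it avoids resolution of singularities and mixed Hodge weights entirely, replacing them by the Leray--Hirsch description of $H^\ast(E)$ and the local self-intersection formula $\iota^\ast\iota_\ast=\,\cup\,\xi$, which is legitimate precisely because $C\subset V_{\mathrm{sm}}$ forces a smooth tubular neighbourhood of $E$ in $\tilde V$. The trade-off is that the paper's method stays inside the Hodge-theoretic toolbox (Theorem~\ref{thmMV} and weight arguments) used throughout Section~\ref{secComMW}, whereas yours imports the Gysin/Thom machinery; both are short, but yours is arguably the more direct and explicit proof of this particular lemma.
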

\begin{proof}
Let $\psi:V_1\to V$ be a resolution of singularities of $V$ and let $E_1$ be the
exceptional divisor of $\psi$. 
Since $C$ is contained in the smooth locus we have that $\psi^{-1}(C)$ is
isomorphic to $C$. Let $\psi_1: \tilde{V_1}\to V_1$ be the blow-up of $V_1$
along $\psi^{-1}(C)$. Equivalently, $\tilde{V_1}=\tilde{V} \times_V V_1$.

The exceptional divisor of $\psi_1$ is isomorphic to $E$ and the exceptional
divisor of $\tilde{V_1}\to V$ is isomorphic to the disjoint union of $E$ and
$E_1$. Denote $\Sigma=V_{\sing}$.

{}From Theorem~\ref{thmMV} we get the following exact sequence 
\[ \dots \to H^3(\tilde{V_1},\QQ) \to H^3(E,\QQ) \to H^4(V_1,\QQ) \to \dots.\]
Since $V_1$ and $E$ are smooth we have that $H^3(E,\QQ)$ has a pure weight 3
Hodge structure and $H^4(V_1,\QQ)$ has a pure weight 4 Hodge structure. Hence
the map $H^3(E,\QQ)\to H^4(V_1)$ is the zero map and $H^3(\tilde{V_1},\QQ)\to
H^3(E,\QQ)$ is surjective.
Consider now the exact sequence of Theorem~\ref{thmMV} for $\psi_1\circ \psi$: 
\[\dots\to  H^3(\tilde{V_1},\QQ) \oplus H^3(\Sigma,\QQ) \to H^3(E_1,\QQ) \oplus
H^3(E,\QQ) \to H^4(V,\QQ) \to \dots\]
Since $H^3(\tilde{V_1},\QQ)\to H^3(E,\QQ)$ is surjective we obtain that
$H^3(E,\QQ)\to H^4(V,\QQ)$ is the zero map.

Consider now the exact sequence of Theorem~\ref{thmMV} for $\tilde{V} \to V$:
\[\dots\to  H^3(\tilde{V},\QQ) \to H^3(E,\QQ) \to H^4(V,\QQ) \to \dots\]
Since $H^3(E,\QQ) \to H^4(V,\QQ)$ is the zero map we obtain that
$H^3(\tilde{V},\QQ) \to H^3(E,\QQ)$ is surjective.
\end{proof}

\begin{theorem}\label{thmMW} Let $Y\subset \PP$ be a minimal Weierstrass
fibration and let $\pi: X \to S$ be an elliptic threefold, birational to $Y$.
Assume that $H^4(Y,\QQ)$ has a pure weight $4$ Hodge structure. Then	
\[\rank \MW(\pi)= \rank \left(H^{2,2}(H^4(Y,\CC))\cap  H^4(Y,\ZZ)\right)  -1\]
and
$H^5(Y,\QQ)\cong H^5(X,\QQ)$.
\end{theorem}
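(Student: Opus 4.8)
The strategy is to run Miranda's construction, recorded in Section~\ref{secBir}, and track its effect on $H^4$ and $H^5$ step by step, showing that the quantity $\rank\left(H^{2,2}(H^4(-,\CC))\cap H^4(-,\ZZ)\right)$ and the group $H^5(-,\QQ)$ are unchanged. Recall that Miranda's algorithm produces a chain $Y = X_0 \dashrightarrow X_1 \dashrightarrow \cdots \dashrightarrow X'$ in which each arrow is one of: (a) a blow-up of the base $S_i$ followed by base change; (b) a blow-up of the threefold along a smooth curve contained in the \emph{smooth} locus; (c) a blow-up along a curve $C$ with $C_\red$ smooth sitting inside the singular locus, where transversally one has an $ADE$ surface singularity; plus the contraction moves $\CCon_{E_{i+1}},\CCon_{F_i}$, each of which is the inverse of a type-(b) blow-up. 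The endpoint $X' = X$ is smooth, so that $H^{2,2}(H^4(X,\CC))\cap H^4(X,\ZZ)$ and $H^5(X,\QQ)$ are the usual objects, and by the Shioda-Tate-Wazir formula (Theorem~\ref{thmSTW}) together with Lefschetz $(1,1)$ and Poincar\'e duality, $\rank\left(H^{2,2}(H^4(X,\CC))\cap H^4(X,\ZZ)\right) = \rho(X) = \rho(S) + f + \rank\MW(\pi) + 1$; one then has to account for the contribution $f + \rho(S)$ separately, so in fact the cleanest route is to prove that the \emph{primitive} part of $H^{2,2}\cap H^4$ (i.e. modulo the fibral classes and the pullback from the base) has rank $\rank\MW(\pi)$, and that this primitive rank is a birational invariant along the chain.

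For each type of move I would invoke Theorem~\ref{thmMV} (the Mayer-Vietoris sequence of Mixed Hodge structures) to compare cohomology of consecutive terms. For a type-(b) blow-up $\tilde V \to V$ along a smooth curve $C$ in the smooth locus, $\cZ = C$, $E = C\times\PP^1$, so the sequence reads $\cdots \to H^{i-1}(E) \to H^i(V) \to H^i(\tilde V)\oplus H^i(C) \to H^i(E) \to \cdots$. In degree $i=4$: $H^3(C)=0$, $H^3(E) = H^3(C\times\PP^1) = H^1(C)\otimes H^2(\PP^1)$, $H^4(E) = H^2(C)\otimes H^2(\PP^1)$, and the key input is Lemma~\ref{lemBlowUp}, which gives surjectivity of $H^3(\tilde V) \to H^3(E)$; feeding this in, the sequence yields that $H^4(V) \to H^4(\tilde V)$ is injective with cokernel supported in the $(2,2)$-part and equal to the class of a ruled surface over $C$, i.e. exactly a fibral divisor class that must be stripped off when passing to the primitive part. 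Hence the primitive $(2,2)$-rank is preserved. In degree $i=5$ the same sequence, with $H^4(C)=0$ and $H^4(E),H^5(E)$ both zero (curve times $\PP^1$), gives $H^5(V)\cong H^5(\tilde V)$. Thus type-(b) moves preserve both invariants (up to fibral classes). The contraction moves are the same computation read backwards. Type-(a) base-change-after-blow-up combines a blow-up of the \emph{surface} $S_i$ (which changes $\rho(S)$ by $1$ and adds one fibral divisor over the new $(-1)$-curve, so the corrections cancel in the formula and the primitive rank is untouched, since $\rank\MW$ is stable under birational base change as noted in Section~\ref{secSetup}) with a fiber-product, and here one checks directly that $H^5$ and the primitive $H^{2,2}\cap H^4$ of $X_i\times_{S_i} S_{i+1}$ agree with those of $X_i$.

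The main obstacle is the type-(c) move: blowing up a curve $C$ with $C_\red$ smooth inside the singular locus of $V$, where $V$ has, transversally to $C$, an $ADE$ surface singularity. Now $V$ is singular along $C$, so Lemma~\ref{lemBlowUp} does not apply directly, and the exceptional divisor $E$ over $C$ is a (possibly reducible) chain of ruled surfaces over (a cover of) $C$ — precisely the components that appear in the resolution of the transversal $ADE$ singularity, whose dual graph is the corresponding $ADE$ Dynkin diagram. Here I would again use Theorem~\ref{thmMV} with $\cZ = C$, compute $H^\bullet(E)$ from the Leray spectral sequence of the ruled fibration (so $H^3(E)$ is built from $H^1$ of the base curve tensored with the $H^2$ of the fibers, and $H^4(E)$ from $H^2$), and then need the analogue of Lemma~\ref{lemBlowUp}, namely surjectivity of $H^3(\tilde V)\to H^3(E)$. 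This should follow by the same device as in the proof of Lemma~\ref{lemBlowUp} — resolve $V$ first, compare, and use purity of $H^3$ of smooth varieties against $H^4$ — but one must be careful that the hypothesis ``$H^4(Y,\QQ)$ has a pure weight $4$ Hodge structure'' propagates along the chain (it is needed so that the relevant weight arguments, killing boundary maps between $H^3(E)$ and $H^4(\text{base})$, go through, and so that the intermediate $X_i$ are well-behaved). Granting this, the type-(c) blow-up adds to $H^4$ exactly the classes of the new ruled exceptional components — fibral divisors — so the \emph{primitive} $(2,2)$-rank is again unchanged, and $H^5$ is unchanged because $H^4(E)$ and $H^5(E)$ contribute nothing new (ruled surfaces over a curve have $H^5 = 0$ and $H^4$ spanned by fiber classes). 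Chaining all the moves together, the primitive $(2,2)\cap H^4$ rank of $Y$ equals that of the smooth model $X$, which by Shioda-Tate-Wazir equals $\rank\MW(\pi)$; equivalently $\rank\left(H^{2,2}(H^4(Y,\CC))\cap H^4(Y,\ZZ)\right) - 1 = \rank\MW(\pi)$ once one checks that on the singular Weierstrass model $Y$ the only fibral/pullback classes in $H^{2,2}\cap H^4$ beyond $\MW$ is the single class of a fiber (all the components of reducible fibers having been contracted in passing from $X$ back to $Y$), accounting for the $-1$. And $H^5(Y,\QQ)\cong H^5(X,\QQ)$ falls out of the same chain of isomorphisms.
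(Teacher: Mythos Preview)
Your overall architecture is exactly the paper's: run Miranda's chain and use Theorem~\ref{thmMV} at each step to control $H^4$ and $H^5$, propagating purity of $H^4$ and invoking Lemma~\ref{lemBlowUp} for the smooth-locus blow-ups, then conclude via Shioda--Tate--Wazir. Two concrete gaps, however, need fixing.

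First, you set $Y=X_0$, but in the paper $X_0$ is the strict transform of $Y$ under the blow-up of $\PP$ along $L=\{z_0=z_1=z_2=0\}$; this resolves the quotient singularity at $(1{:}1{:}0{:}0{:}0)$ and has exceptional divisor $E\cong\PP^2$. Applying Theorem~\ref{thmMV} to $X_0\to Y$ gives the short exact sequence $0\to H^4(Y,\QQ)\to H^4(X_0,\QQ)\to\QQ(-2)\to 0$, and this extra $\QQ(-2)$ is precisely what makes the bookkeeping work: for $i=0$ one has $\rho(S_0)+f_0+1=2$, and subtracting the one class coming from $E$ yields the ``$-1$'' in the statement. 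Your informal explanation of the $-1$ (``the single class of a fiber on $Y$'') is not how the paper gets it. Second, your enumeration of Miranda's moves is incomplete. Miranda's modification of type~2 (minimalization of a non-minimal Weierstrass equation) is a five-step package: two blow-ups of curves in the \emph{singular} locus whose transversal type is \emph{not} $ADE$, then a blow-up of a \emph{surface} $R_{i+2}$, then two contractions. The surface blow-up is not in your list at all; the paper treats it separately and shows $H^4(X_{i+2},\QQ)\to H^4(X_{i+3},\QQ)$ is an isomorphism because $H^4(\cZ,\QQ)\cong H^4(E,\QQ)$ there. Finally, rather than a loosely defined ``primitive $(2,2)$-rank'', the paper tracks the explicit scalar
\[
\rank\bigl(H^{2,2}(H^4(X_i,\CC))\cap H^4(X_i,\ZZ)\bigr)-\rho(S_i)-f_i-1
\]
and proves by induction that it is independent of $i$ (simultaneously with purity of $H^4(X_i,\QQ)$). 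For type~(1) moves, for instance, the center $\cZ$ is the singular fibre $C$ (a possibly reducible arithmetic-genus-$1$ curve with $k$ components), $E\cong C\times\PP^1$, and one checks $\rank H^{2,2}\cap H^4$ goes up by $k$ while $f$ goes up by $k-1$ and $\rho(S)$ by $1$; your treatment of this case is too vague to see this cancellation.
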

\begin{proof} Since both $\rank \MW(\pi)$ and $H^5(X,\QQ)$
are birational invariants of smooth fibred threefolds, it
suffices to prove this  statement for the elliptic threefold $\pi': X' \to S'$
obtained from
Miranda's procedure.
  Then by the
Shioda-Tate-Wazir formula and Lefschetz (1,1) one has
\begin{eqnarray*} 
\rank \MW(\pi) &=& \rho(X')-\rho(S')-f-1 \\
& =& \rank H^2(X',\ZZ) \cap H^{1,1}(X',\CC)  -\rho(S')-f-1 \\
& =& \rank H^4(X',\ZZ) \cap H^{2,2}(X',\CC)  -\rho(S')-f-1
\end{eqnarray*}
where $f$ is the number of independent fibral divisors, not intersecting the
image of the zero
section. 

Let $\pi_i:X_i\to S_i$ be the associated sequence of modifications. Let $f_i$
denote the number of 
independent fibral  divisors of $\pi_i$, not
intersecting the zero-section. 
We will show by induction that for each $i$ we have that $H^4(X_i,\QQ)$ has a
pure weight 4 Hodge structure and that
\begin{equation}\label{eqninv}
 \rank \left(H^{2,2}(H^4(X_i,\CC))\cap  H^4(X_i,\ZZ)\right) - \rho(S_i)-f_i-1
\end{equation}
is independent of $i$. 

This suffices for the first statement: 
for the elliptic threefold in the final step of Miranda's construction 
we have that (\ref{eqninv}) equals $\rank \MW(\pi)$
by the Shioda-Tate-Wazir formula. 

Now consider (\ref{eqninv}) for $i=0$. {}From
$S_0=\PP^2$ we get  $\rho_0(S_0)=1$. 
Since all fibers of $\pi_0$ are irreducible, we get $f_0=0$. Finally,
Theorem~\ref{thmMV} applied to 
$X_0\to Y$ yields an exact sequence of $\QQ$-MHS
\[H^3(E,\QQ)\to  H^4(Y,\QQ)\to H^4(X_0,\QQ) \to H^4(E,\QQ) \to H^5(X_0,\QQ). \]
Since $E\cong \PP^2$ we get $H^3(E,\QQ)=0$ and $H^4(E,\QQ)=\QQ(-2)$. Also the
map $H^4(X_0,\QQ)\to H^4(E,\QQ)$ is non-zero, hence we get
\[ 0 \to H^4(Y,\QQ) \to H^4(X_0,\QQ) \to \QQ(-2) \to 0.\]
In particular, $H^4(X_0,\QQ)$ has a pure weight 4 Hodge structure and
\begin{eqnarray*} 
&& \rank \left(H^{2,2}(H^4(X_0,\CC))\cap  H^4(X_0,\ZZ)\right) - \rho(S_0)-f_0-1 
\\ 
&=& \rank \left(H^{2,2}(H^4(X_0,\CC))\cap  H^4(X_0,\ZZ)\right) - 2  \\
&=& \rank \left(H^{2,2}(H^4(Y,\CC))\cap  H^4(Y,\ZZ)\right) - 1. 
\end{eqnarray*}

To prove that (\ref{eqninv}) is actually independent of $i$, we consider each of
the
three types of modifications mentioned in Miranda's construction separately. 
In each case we
apply Theorem~\ref{thmMV} several times without mentioning it explicitly:
\begin{enumerate}
\item Consider the first type of modification, i.e. we blow up a point 
$p\in \Delta \subset S_i$ and then base change. For 
the proper modification $X_{i+1}\to X_i$ we have that $\cZ=C\subset X_i$
 is a curve of
arithmetic genus 1, i.e., $C$ is either a union of $k$ rational curves, a 
cuspidal rational curve or a nodal rational curve. 
In the last two cases we set
$k=1$.  Using the universal property of the fiber product we obtain that 
the exceptional divisor $E\subset X_{i+1}$
is isomorphic to a product $C\times \PP^1$.  
Using our induction hypothesis on
$H^4(X_i,\QQ)$ (i.e., that it is of pure weight 4) and that $H^3(E,\QQ)$ has no
classes
of weight $\geq 4$  
\cite[Theorem 5.39]{PS}, the exact sequence of Theorem~\ref{thmMV} yields the
following exact sequence
 \[0 \to  H^4(X_i,\QQ)\to  H^4(X_{i+1},\QQ) \to  H^4(E,\QQ)=\QQ(-2)^k.\]
Each of the $k$ 
irreducible components of $C\times \PP^1$  yields a class $\xi_j$ in
$H^4(X_{i+1},\QQ)$. I.e., we have
  \[  \spa \{\xi_1,\dots,\xi_k\} \subset H^4(X_{i+1},\QQ) \to H^4(E,\QQ) \]
Clearly $\dim H^4(E,\QQ)=k$ and the $\xi_j$ map to a basis
of $H^4(E,\QQ)$. In particular, the $\xi_j$ are
independent in $H^4(X_{i+1},\QQ)$ and the map $H^4(X_{i+1},\QQ) \to H^4(E,\QQ)$
is surjective.  The
conclusion is that 
 \[\rank \left(H^{2,2}(H^4(X_{i+1},\CC))\cap  H^4(X_{i+1},\ZZ)\right) = \]
\[ = k+ \rank
\left(H^{2,2}(H^4(X_{i},\CC))\cap  H^4(X_{i},\ZZ)\right),  \]
  $f_{i+1}=f_{i}+k-1$ and  $\rho(S_{i+1})=\rho(S_i)+1$, and hence the quantity
(\ref{eqninv}) is unchanged.
 
\item  
The second modification consists of two blow-ups of a curve, the blow-up
of a rational surface and two blow-down morphisms. 
We consider first the blow-up of a curve in $X_i$, and the blow-up of the curve
in $X_{i+1}$.
A reasoning very similar to the previous case yields  that
$H^4(X_{i+1},\QQ)$ and $H^4(X_{i+2},\QQ)$ have a pure weight 4 Hodge structure,
that classes of type  $(2,2)$ are added to $H^4(X_{i+1},\ZZ)$ and
$H^4(X_{i+2},\ZZ)$
and that $f_{i+2}=f_{i+1}+1=f_i+2$.
I.e., the quantity (\ref{eqninv}) is unchanged.

Consider now the third  step, the blow-up of a rational surface.
In this case both $\cZ$ and $E$ are irreducible surfaces and we have an
isomorphism  $H^4(\cZ,\QQ) \to H^4(E,\QQ)$. Since $H^3(E,\QQ)$ has Hodge weights
at most
$3$ \cite[Theorem 5.39]{PS} and  $H^4(X_{i+2},\QQ)$ has a pure weight 4 Hodge
structure,
 Theorem~\ref{thmMV} implies that we have  an
isomorphisms $ H^4(X_{i+2},\QQ) \to  H^4(X_{i+3},\QQ)$. 
Hence $H^4(X_{i+1},\QQ)$ is of pure weight 4 and all
entries in (\ref{eqninv}) remain unchanged.

The final two steps are the contraction of the two ruled surfaces. I.e.,
$X_{i+3} \to X_{i+4}$ and $X_{i+4} \to X_{i+5}$ are blow-ups of curves. In the
previous section it is argued that these curves are smooth and lie in the smooth
locus of $X_{i+4}$ and $X_{i+5}$.

 Combining Lemma~\ref{lemBlowUp} with the exact sequence of Theorem~\ref{thmMV}
yields  exact sequences
\[ 0 \to H^4(X_{i+4},\QQ) \to H^4(X_{i+3},\QQ) \to H^4(E_{i+1},\QQ) \to \dots \]
and
\[ 0 \to H^4(X_{i+5},\QQ) \to H^4(X_{i+4},\QQ) \to H^4(F_i,\QQ) \to \dots \]
(notation as in the previous section.)

In particular, $H^4(X_{i+4},\QQ)$ and $H^4(X_{i+5},\QQ)$ have pure weight 4
Hodge structures. As above, one can show that the class of $E_{i+1}$ (resp.
$F_i$) in $H^4(X_{i+3},\QQ)$ (resp.  $H^4(X_{i+4},\QQ)$) is mapped to a nonzero
element in $H^4(E_{i+1},\QQ)$ ( resp. $H^4(F_i,\QQ)$). Hence these maps are
surjective, i.e., $H^4(X_{i+5},\ZZ)$ has rank 1 smaller than $H^4(X_{i+4},\ZZ)$,
and the difference is a class of type $(2,2)$.
Similarly, $H^4(X_{i+4},\ZZ)$ has rank 1 smaller than $H^4(X_{i+3},\ZZ)$, and
the difference is a class of type $(2,2)$.
 Moreover, $f_{i+3}=f_{i+4}+1=f_{i+5}+2$, hence the quantity (\ref{eqninv}) is
unchanged.

\item 
The third modification is to blow up a curve $C$ inside $X_{i ,\sing}$
such that $C_{\red}$ is smooth.
The exceptional divisor of such a blow up is not necessarily irreducible, say it
has $k$ irreducible components, hence $H^4(E,\QQ)=\QQ(-2)^k$. Each component of
$E$
yields a class $\xi_j$ in $H^4(X_{i+1},\QQ)$ and the same argument as above
shows that $H^4(X_{i+1},\QQ)$ has pure weight 4 and 
that the classes $\xi_j$ are independent. Hence $f_{i+1}=f_i+k$ and 
$\rank \left(H^{2,2}(H^4(X_i,\CC))\cap  H^4(X_i,\ZZ)\right)$
increases by $k$. Since $S_{i+1}=S_i$ we have proved that (\ref{eqninv}) remains
unchanged.
\end{enumerate}

To prove that $H^5(Y,\QQ)\cong H^5(X,\QQ)$, note that in all three cases the map
$H^4(X_i,\QQ)\to H^4(E,\QQ)$ is surjective. Since $h^5(\cZ,\QQ)=h^5(E,\QQ)=0$ it
follows from Theorem~\ref{thmMV} that $H^5(X_i,\QQ)\cong H^5(X_{i+1},\QQ)$ for
all
$i$.
\end{proof}

 \begin{corollary}\label{corTrvMWTxt} Let $\pi: X \to S$ be an elliptic
threefold associated with a hypersurface
 \[ y^2=x^3+Px+Q  \]
  with $P\in \CC[z_0,z_1,z_2]_{4n}$ and $Q\in \CC[z_0,z_1,z_2]_{6n}$, such that 
 \begin{enumerate}
\item the curve $\Delta:4P^3+27Q^2=0$ is reduced, $\Delta$ has only
double points as singularities, and $Q$ vanishes at each of these double points
or 
\item $P$ is identical zero  and  $Q=0$ defines a smooth curve of degree $6n$
in $\PP^2$.
 \end{enumerate} 
Then $\rank \MW(\pi)=0$.
 \end{corollary}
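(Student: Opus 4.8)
The plan is to deduce this from Theorem~\ref{thmCohRel} (equivalently Theorem~\ref{thmMW}). The hypotheses force the Weierstrass equation to be minimal --- in case~(1) a common factor $g$ with $g^4\mid P$ and $g^6\mid Q$ would give $g^{12}\mid\Delta$, contradicting that $\Delta$ is reduced, and in case~(2) smoothness of $\{Q=0\}$ already forces $Q$ to be squarefree --- so Theorem~\ref{thmCohRel} applies, and it suffices to show that $H^4(Y,\QQ)$ is pure of weight $4$ with $\rank\left(H^{2,2}(H^4(Y,\CC))\cap H^4(Y,\ZZ)\right)=1$. I would obtain both at once by showing that $Y$ is \emph{quasismooth}: a quasismooth hypersurface in weighted projective space is a $V$-manifold, hence has pure rational cohomology satisfying Poincar\'e duality, so by the Lefschetz hyperplane theorem $h^4(Y)=h^2(Y)=h^2(\PP)=1$ with $H^4(Y,\QQ)$ pure of weight $4$; its generator is algebraic (the square of the hyperplane class), so $H^{2,2}(H^4(Y,\CC))\cap H^4(Y,\ZZ)$ has rank $1$ and Theorem~\ref{thmCohRel} gives $\rank\MW(\pi)=0$. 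This is the same mechanism by which a sufficiently general such $Y$ already has $\rank\MW(\pi)=0$.

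Thus everything reduces to showing $\Sigma=\emptyset$, where $\Sigma\subset\PP$ is the common zero locus of the partial derivatives of $f=y^2-x^3-Px-Q$. From Section~\ref{secSetup} we know $\Sigma\cap L=\emptyset$, and for $n=1$ that $\PP_{\sing}$ does not meet $Y$; so it suffices to exclude points of $\Sigma$ lying over $\PP^2$, i.e.\ to show the Weierstrass threefold is smooth away from $L$. Such a point over $p\in\PP^2$ satisfies, in suitable fibre coordinates $(x,y)$, the equations $y=0$, $3x^2+P(p)=0$, $xP_{z_i}(p)+Q_{z_i}(p)=0$ for $i=0,1,2$, and $x^3+P(p)x+Q(p)=0$.

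In case~(1) I would split on whether $P(p)$ vanishes. If $P(p)\neq0$, then $3x^2=-P$ and the Weierstrass relation give $x=-3Q(p)/(2P(p))$, and substituting into $3x^2+P(p)=0$ yields $4P^3+27Q^2=0$, i.e.\ $p\in\Delta$. A short computation shows $\Delta_{z_i}=54\,Q\,(xP_{z_i}+Q_{z_i})$ whenever $\Delta=0$ and $x=-3Q/(2P)$; hence every $\Delta_{z_i}(p)$ vanishes, so $p$ is a double point of $\Delta$, where $Q(p)=0$ by hypothesis --- impossible, since $\Delta(p)=0$ with $P(p)\neq0$ forces $27Q(p)^2=-4P(p)^3\neq0$. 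If $P(p)=0$, then $x=0$, so the Weierstrass relation forces $Q(p)=0$ and $p\in\Delta$; since $P$ and $Q$ both vanish at $p$, all first partials of $\Delta$ vanish there, so $p$ is a double point of $\Delta$ and $\ord_p\Delta=2$. But $\ord_p(4P^3)\geq3$, so $\ord_pQ=1$, and hence some $Q_{z_i}(p)\neq0$ --- contradicting $xP_{z_i}(p)+Q_{z_i}(p)=Q_{z_i}(p)=0$. Thus $\Sigma=\emptyset$. Case~(2) is immediate: with $P\equiv0$ the partials of $f$ are $3x^2,\,2y,\,Q_{z_0},\,Q_{z_1},\,Q_{z_2}$, so $\Sigma\subseteq\{x=y=0\}$, where it is cut out by $Q_{z_0}=Q_{z_1}=Q_{z_2}=0$; by Euler's relation any common zero of these would be a singular point of the smooth curve $\{Q=0\}$, which is impossible.

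I expect the singularity analysis of case~(1) to be the main obstacle: one must show that the somewhat indirect hypotheses on $\Delta$ and $Q$ really do force the total space $Y$ to be quasismooth. The two short computations that carry the argument are the identity expressing the partials of $\Delta$ through the equations of a fibrewise singular point --- which reduces ``$Y$ is singular over $p$'' to ``$p$ is a singular point of $\Delta$'' --- and the order estimate upgrading ``$Q$ vanishes at a double point of $\Delta$'' to ``$Q$ vanishes there to order exactly one''; the rest of the proof is formal.
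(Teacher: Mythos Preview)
Your proof is correct and follows exactly the paper's approach: show that the hypotheses force $Y$ to be quasismooth, then use that a quasismooth hypersurface is a $V$-manifold satisfying Poincar\'e duality, so Lefschetz gives $h^4(Y)=h^2(Y)=1$ and Theorem~\ref{thmMW} yields $\rank\MW(\pi)=0$. The paper's own proof simply asserts that ``an easy calculation shows that our assumptions on $P$ and $Q$ are equivalent to $Y$ being quasismooth'' and then invokes \cite[Corollary B19]{Dim}; you have actually carried out that calculation, and your check of minimality is a small point the paper leaves implicit.
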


\begin{proof} Using Lefschetz hyperplane Theorem \cite[Theorem B22]{Dim} we
obtain that $h^2(Y)=1$. 
An easy calculation shows that our assumptions on $P$ and $Q$ are equivalent to 
 $Y$ being quasismooth. Then \cite[Corollary B19]{Dim} states that $H^i(Y,\QQ)$
satisfies Poincar\'e duality, hence
\[ h^4(Y)=h^2(Y)=1\]
and
$\rank\MW(\pi)=0.$
\end{proof}

\part{Cohomology of hypersurfaces in $\PP$}
\section{Cohomology of hypersurfaces in $\PP$: general results}\label{secCoh}

In this section let $Y$ be an irreducible and reduced hypersurface of degree $d$
in some weighted projective space $\PP$ of dimension $n+1$ defined by the
polynomial $g$. 
Let $\Sigma\subset \PP$ denote the locus where all the partials of $g$
vanish. We assume that $\Sigma$ does not
intersect $\PP_{\sing}$, i.e., $Y$ intersects the singular locus of $\PP$
transversally.
As usual we set $\dim \emptyset=-1$.

For an arbitrary hypersurface $Y$ the following form of Lefschetz'
hyperplane theorem holds:
\begin{proposition}[{\cite[Corollary B22]{Dim}}]\label{prpLHT} We have the
following isomorphisms for the cohomology of $Y$:
\begin{enumerate}
\item $H^i(Y,\QQ) \cong H^i(\PP,\QQ)$ for $i\leq n-1$.
\item $H^i(Y,\QQ) \cong H^i(\PP,\QQ)$ for $n+2+\dim \Sigma \leq i \leq 2n$.
\end{enumerate}
\end{proposition}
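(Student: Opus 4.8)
The plan is to deduce both parts from a single long exact sequence by transferring the computation to the affine complement $U:=\PP\setminus Y$. Two structural facts are used throughout: since $\cO_\PP(d)$ is ample, $U$ is affine of dimension $n+1$; and since $\PP$ has only finite quotient singularities, $\PP$ and the open subset $U$ are $\QQ$-homology manifolds, $U$ of real dimension $2n+2$. I would start from the long exact sequence attached to the decomposition $\PP=Y\sqcup U$,
\[
\cdots\to H^i_c(U,\QQ)\to H^i(\PP,\QQ)\to H^i(Y,\QQ)\to H^{i+1}_c(U,\QQ)\to\cdots ,
\]
which is valid because $\PP$ is compact; Poincar\'e duality on $U$ identifies $H^i_c(U,\QQ)\cong H^{2n+2-i}(U,\QQ)^{\vee}$, so everything reduces to locating the nonzero cohomology of $U$.

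The first input is Artin vanishing: $H^j(U,\QQ)=0$ for $j>n+1$, hence $H^i_c(U,\QQ)=0$ for $i\le n$. The second, and the crucial one, is the connectivity bound $\widetilde H^j(U,\QQ)=0$ for $j\le n-1-\dim\Sigma$, equivalently $H^i_c(U,\QQ)=0$ for $n+3+\dim\Sigma\le i\le 2n+1$. To get this I would pass to the affine cone: writing $\tilde g\in\CC[x_0,\dots,x_{n+1}]$ for the weighted homogeneous equation of $Y$, the global (\,=\,local, by homogeneity) Milnor fibre $F:=\tilde g^{-1}(1)\subset\CC^{n+2}$ maps to $U$ by a finite surjective morphism (its fibres being the $\mu_d$-orbits lying in $F$), so that $H^*(U,\QQ)\hookrightarrow H^*(F,\QQ)$ is a split injection. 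Since $\Sing\{\tilde g=0\}$ is the affine cone over $\Sigma$, of dimension $\dim\Sigma+1$, the Kato--Matsumoto bound makes $F$ exactly $\bigl((n+2)-(\dim\Sigma+1)-2\bigr)=(n-1-\dim\Sigma)$-connected, which gives the displayed vanishing.

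Finally I would observe that the connecting maps $H^i_c(U,\QQ)\to H^i(\PP,\QQ)$ vanish for $i<2n+2$: the cycle class $[Y]$ lies in the image of $H^2(\PP,U;\QQ)\to H^2(\PP,\QQ)$, hence restricts to zero in $H^2(U,\QQ)$; as $H^*(\PP,\QQ)=\QQ[h]/(h^{n+2})$ and $[Y]$ is a nonzero multiple of $h$, the restriction map $H^k(\PP,\QQ)\to H^k(U,\QQ)$ is zero for all $k>0$, and dualising via Poincar\'e duality this is precisely the asserted vanishing. Plugging this into the long exact sequence, for $1\le i\le 2n$ one gets $H^i(\PP,\QQ)\hookrightarrow H^i(Y,\QQ)$ with cokernel $H^{i+1}_c(U,\QQ)$, so $H^i(\PP,\QQ)\cong H^i(Y,\QQ)$ exactly when $H^{i+1}_c(U,\QQ)=0$, i.e.\ when $i\le n-1$ or $i\ge n+2+\dim\Sigma$; the case $i=0$ is the connectedness of $Y$. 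This yields (1) and (2).

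The one non-formal step — and the main obstacle to a self-contained proof — is the connectivity estimate $\widetilde H^j(U,\QQ)=0$ for $j\le n-1-\dim\Sigma$, which is where the hypothesis on $\Sigma$ genuinely enters and which is essentially the content of \cite[Corollary B22]{Dim}; in a self-contained account one would prove it via the Milnor fibre as above (Kato--Matsumoto), or directly by Morse theory on the affine variety $U$ in the style of Hamm's Lefschetz theorems with singularities. Everything else — Poincar\'e duality for $\QQ$-homology manifolds, Artin vanishing, and the fact that an ample class dies in the complement of its divisor — is formal.
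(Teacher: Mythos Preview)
The paper does not supply its own proof of this proposition; it is quoted verbatim from \cite[Corollary B22]{Dim}. Your argument is correct and is precisely the standard route to that result: the long exact sequence of the pair $(\PP,U)$ in compactly supported cohomology, Poincar\'e duality on the $\QQ$-homology manifold $U$, Artin vanishing for the affine $U$, the Kato--Matsumoto connectivity bound applied to the global Milnor fibre $F=\tilde g^{-1}(1)$ (using that for weighted homogeneous $\tilde g$ the global and local Milnor fibres agree, and that $H^*(U,\QQ)$ injects into $H^*(F,\QQ)$ via the $\mu_d$-quotient), and finally the observation that the restriction $H^k(\PP,\QQ)\to H^k(U,\QQ)$ kills the hyperplane class and is hence zero for $k>0$. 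This is essentially how the argument runs in Dimca's book, so there is nothing to contrast; you have reconstructed the cited proof.
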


In all our applications we have $\dim \Sigma \leq 1$.

Let $U:=\PP \setminus Y$.
Since $U$ is affine 
we have 
\[ H^k(U,\CC)=H^0(U,\Omega^k_U)/d H^0(U,\Omega^{k-1}_U).\] 
Note that 
\[ H^0(U,\Omega^k_U) \cong \cup_{i\geq 0} H^0(\PP,\Omega^k_\PP(iY)).\]
For $\omega\in H^0(U,\Omega_U^k)$ define $\ord_Y(\omega) :=\min \{ i \colon
\omega
\in H^0(\PP,\Omega^k_{\PP}(iY) \}$. Let $P^\bullet$ be defined by
\[ P^s H^0(U,\Omega_U^k) = \{ \omega \in H^0(U,\Omega_U^k) \colon
\ord_Y(\omega)\leq k-s+1\}. \] 
Since $d(P^s H^0(U,\Omega^{k-1}_U)) \subset P^s H^0(U,\Omega^{k}_U)$ 
this induces a filtration $P^\bullet$ on $H^k(U,\CC)$, called the polar
filtration.

>From a result of Griffiths-Steenbrink (\cite[Section 4]{SteQua}) it follows that the Hodge filtration and the Polar filtration coincide if $Y$ is quasismooth. 
If we drop the assumption that $Y$ is quasismooth then we get the following
weaker 

\begin{theorem}[{Deligne-Dimca \cite{DelDim}}]\label{DD} For any hypersurface
$Y\subset \PP$ we have
\[P^sH^k(U,\CC) \supset F^sH^k(U,\CC).\]
\end{theorem}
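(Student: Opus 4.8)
The plan is to carry out the argument of Deligne and Dimca \cite{DelDim}: realise the Hodge filtration on $H^k(U,\CC)$ through a logarithmic resolution of the pair $(\PP,Y)$, and then compare it term by term with the polar filtration on the meromorphic de Rham complex of $\PP$ along $Y$.

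First I would fix a proper birational morphism $\mu\colon\tilde{\PP}\to\PP$ with $\tilde{\PP}$ smooth, simultaneously resolving the finite quotient singularities of $\PP$ and the singularities of $Y$, so that $D:=(\mu^{-1}(Y))_{\red}$ is a simple normal crossings divisor and $\mu$ is an isomorphism over $U_{\sm}$ and over a dense open subset of $Y_{\sm}$. Because $U=\PP\setminus Y$ is affine, $H^k(U,\CC)=\mathbb{H}^k(\PP,\Omega^\bullet_\PP(*Y))$, and the polar filtration of the excerpt is the one induced by the subcomplexes $P^s\Omega^\bullet_\PP(*Y)$ whose degree-$q$ term is $\Omega^q_\PP((q-s+1)Y)$. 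On the other side, Deligne's theory (valid for the $V$-manifold $\PP$, cf. \cite{PS}) gives $H^k(U,\CC)=\mathbb{H}^k(\tilde{\PP},\Omega^\bullet_{\tilde{\PP}}(\log D))$ via a comparison quasi-isomorphism $R\mu_*\Omega^\bullet_{\tilde{\PP}}(\log D)\xrightarrow{\ \sim\ }\Omega^\bullet_\PP(*Y)$, and $F^sH^k(U,\CC)$ is, under this identification, the image of the hypercohomology of the b\^ete truncation $\Omega^{\bullet\geq s}_{\tilde{\PP}}(\log D)$.

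The core computation is then a degreewise sheaf inclusion. For every $q$ we have $\Omega^q_{\tilde{\PP}}(\log D)\subseteq\Omega^q_{\tilde{\PP}}(D)=\Omega^q_{\tilde{\PP}}\otimes\cO_{\tilde{\PP}}(D)$; since every component of $D$ appears in the divisor $\mu^*Y$ with multiplicity at least $1$, we have $\cO_{\tilde{\PP}}(D)\subseteq\mu^*\cO_\PP(Y)$, so the projection formula together with $\mu_*\Omega^q_{\tilde{\PP}}\subseteq\Omega^q_\PP$ (reflexive differentials on the $V$-manifold $\PP$) yields $\mu_*\Omega^q_{\tilde{\PP}}(\log D)\subseteq\Omega^q_\PP(Y)$. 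For $q\geq s$ this lies inside $\Omega^q_\PP((q-s+1)Y)$, hence the comparison quasi-isomorphism carries $R\mu_*\Omega^{\bullet\geq s}_{\tilde{\PP}}(\log D)$ into $P^s\Omega^\bullet_\PP(*Y)$, compatibly with the inclusions of both into $\Omega^\bullet_\PP(*Y)$. Taking hypercohomology gives exactly $F^sH^k(U,\CC)\subseteq P^sH^k(U,\CC)$. Concretely: a \v{C}ech--de Rham representative of a class in $F^s$ is built from local log forms of degree $\geq s$; after pushing down, each of these acquires pole order at most $1$ along $Y$, and the standard coboundary bookkeeping in the double complex turns this into a single global form on $U$ with $\ord_Y\leq k-s+1$.

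The step I expect to be the real obstacle is the comparison input over the singular points --- the quotient singularities of $\PP$, and more importantly the singularities of $Y$: one must know that $R\mu_*\Omega^\bullet_{\tilde{\PP}}(\log D)\to\Omega^\bullet_\PP(*Y)$ is a quasi-isomorphism and that Deligne's $F^\bullet$ is indeed the filtration induced by the b\^ete truncations on it. Once that is in hand, the pole-order estimate above is formal. One can either invoke the general machinery directly (as in \cite{PS}) or argue \'etale-locally, reducing to the germ of a hypersurface singularity together with an embedded resolution; the latter route also makes transparent why the inclusion is in general strict for singular $Y$, while for $Y$ quasismooth the comparison becomes an isomorphism of \emph{filtered} complexes, recovering the Griffiths--Steenbrink equality $F^\bullet=P^\bullet$ recalled above (\cite[Section 4]{SteQua}).
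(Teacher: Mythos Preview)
The paper does not prove this statement at all: Theorem~\ref{DD} is simply quoted from Deligne--Dimca \cite{DelDim} and used as a black box. So there is nothing in the paper to compare your argument against beyond the bare citation.

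Your outline is the standard Deligne--Dimca approach and is essentially correct in spirit. One point worth tightening: the natural comparison map goes from $\Omega^\bullet_\PP(*Y)$ to $R\mu_*\Omega^\bullet_{\tilde{\PP}}(\log D)$ (pullback of meromorphic forms, then restrict to log forms), not the other way as you wrote; you are implicitly inverting a quasi-isomorphism in the derived category. More substantively, the sheaf inclusion $\mu_*\Omega^q_{\tilde{\PP}}(\log D)\subseteq\hat{\Omega}^q_\PP(Y)$ gives a map of \emph{underived} pushforwards, whereas the Hodge filtration is defined via $R\mu_*$ of the b\^ete truncation. The clean way to organise this (and what \cite{DelDim} effectively do) is to work entirely upstairs on $\tilde{\PP}$: one has $\sigma_{\ge s}\Omega^\bullet_{\tilde{\PP}}(\log D)\subseteq \tilde P^s\Omega^\bullet_{\tilde{\PP}}(*D)$, where $\tilde P^s$ is the pole filtration defined using the divisor $\mu^*Y$ rather than $D_{\red}$; then one checks that the filtered quasi-isomorphism $(\hat{\Omega}^\bullet_\PP(*Y),P^\bullet)\to (R\mu_*\Omega^\bullet_{\tilde{\PP}}(*D),\tilde P^\bullet)$ respects the pole filtrations. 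This sidesteps the $\mu_*$ versus $R\mu_*$ bookkeeping that your \v{C}ech paragraph gestures at. You correctly flag this comparison step as the real content; once it is in place, the rest is indeed formal.
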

There exist examples for which both filtrations differ, see \cite[Remark
6.1.33]{Dim}, \cite{DimGri}.

\begin{remark}\label{rmkminpole}
Since $H^{n+1}(U,\CC)=F^1 H^{n+1}(U,\CC)$ it follows from the above theorem that
$H^{n+1}(U,\CC)= P^{1}H^{n+1}(U,\CC)$. This implies that every 
 class of $H^{n+1}(U,\CC)$ has pole order at most $n+1$.
\end{remark}

The de Rham complex with filtration $P^\bullet$ yields
a spectral sequence $E_r^{p,q}$. In the quasismooth case 
this spectral sequence degenerates at $E_1$ and establishes natural isomorphisms between graded pieces of the Hodge filtration and certain graded pieces of the Jacobian Ring of $g$.

In the sequel we need the following notation. Let $x_i$ denote the
coordinates on
$\PP$ of weight $w_i$ and let $w=\sum w_i$. Set
\[ \Omega:=\left( \prod_j{x_j}\right)\sum (-1)^i w_i \frac{dx_0}{x_0}\wedge
\frac{dx_1}{x_1}\wedge \dots \wedge \widehat{\frac{dx_i}{x_i}} \wedge \dots
\wedge \frac{dx_{n+1}}{x_{n+1}}. \]
Then $H^0(\PP,\Omega^{n+1}(kY))$ is generated (as $\CC$-vector space) by
\[ \omega_f:=\frac{f}{g^k} \Omega\]
where $\deg(f)=kd-w$.

Write $Y^*=Y\setminus \Sigma$ and let $\PP^*=\PP\setminus \Sigma$ where, as
before, $\Sigma$ is 
defined
by the vanishing of the partials of $g$. Note that, since we have assumed that
$Y$ intersects $\PP_
{\sing}$
transversally, we have $\Sigma \cap \PP_{\sing}= \emptyset$.  In particular, 
$U=\PP^* \setminus Y^*=\PP\setminus Y$.

In generalizing the approach of Griffiths and Steenbrink to the non-quasismooth case we encounter the following
problems:
\begin{enumerate}
\item The Poincar\'e residue map is not an isomorphism.
\item We can still define the filtered de Rham complex and construct the
spectral sequence $E^{p,q}_r$. This sequence, however, does not degenerate at
$E_1$ but at
a higher step. 
\item The polar filtration and the Hodge filtration can differ.
\end{enumerate}

The following approach is similar to \cite{DimBet}, where Dimca studied
hypersurfaces with isolated singularities. The exact sequence of the pair
$(Y,Y^*)$ reads as 
\begin{equation} \label{eqnPairSigma} \dots \to H^{k}_\Sigma(Y,\QQ) \to
H^{k}(Y,\QQ) \to H^{k}(Y^*,\QQ) \to H^{k+1}_\Sigma(Y,\QQ) \to \dots
\end{equation}
This is a sequence of Mixed Hodge structures by \cite[Proposition 5.47]{PS}.

{}From now we on assume that $n=2$ and $\dim \Sigma\leq 0$ or $n=3$ and $\dim
\Sigma
\leq 1$. This will be the case in all our applications.
By Proposition~\ref{prpLHT} the only interesting cohomology groups are
$H^i(Y,\QQ)$ for $i=n,n+1,n+2$. 
We will study these groups by using (\ref{eqnPairSigma}). In this section we
focus on the calculation of $H^i(Y^*,\QQ)$.
The calculation of $H^i_{\Sigma}(Y,\QQ)$ will then be done in the following
sections.

We start by relating the cohomology of $Y^*$ to the cohomology of $U$ and
$\Sigma$. For this we need the notion of primitive cohomology. 
If $V \subset \PP$ is a quasi-projective subvariety of codimension $c$, we
define 
$H^i(V,\QQ)_{\prim}$ to be the kernel of the natural map 
$H^{i}(V,\QQ) \to H^{i+2c}(\PP,\QQ)(c)$, given by repeated cupping with the
hyperplane class. 

In the quasismooth case we can relate $H^i(Y^*,\CC)_{\prim}$ to $H^{i+1}(U,\CC)$
by
using the Poincar\'e residue map. In the non-quasismooth case this is more
subtle.

\begin{proposition} \label{prpPRsing} We have the following:
\begin{enumerate}
\item  Suppose $n= 2$ and $\dim \Sigma= 0$, then 
\begin{eqnarray*}& H^2(Y^*,\QQ)_{\prim}\cong H^3(U,\QQ)(1); \; H^3(Y^*,\QQ)\cong
\QQ(-2)^{\# \Sigma-1} &\\
&\mbox{
and } H^4(Y^*,\QQ)=0.\end{eqnarray*}
\item Suppose $n=3$ and $\dim \Sigma=0$, then 
\begin{eqnarray*} &H^3(Y^*,\QQ)  \cong  H^4(U,\QQ)(1) ;\;  H^4(Y^*,\QQ) \cong
\QQ(-2)& \\ &\mbox{ and }  H^5(Y^*,\QQ)
\cong \QQ(-3)^{\# \Sigma-1}.&\end{eqnarray*}
\item  Suppose $n=3$ and $\dim \Sigma=1$, then 
\[ 0\to H^4(U,\QQ)(1) \to H^3(Y^*,\QQ) \to H^2(\Sigma,\QQ)_{\prim}^*(-3)\to 0 \]
is exact. Moreover
\[ H^4(Y^*,\QQ)\cong H^1(\Sigma,\QQ)^*(-3) \mbox{ and } H^5(Y^*,\QQ) \cong
H^0(\Sigma,\QQ)_{\prim}^*(-3).\]
\end{enumerate}
\end{proposition}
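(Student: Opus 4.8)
The three parts of Proposition~\ref{prpPRsing} are all proved by the same mechanism: one analyzes the Gysin/excision exact sequence relating $H^\bullet(Y^*,\QQ)$, $H^\bullet(U,\QQ)$ and the cohomology of the relevant complement, keeping track of weights and Hodge types. For the last statement (the case $n=3$, $\dim\Sigma=1$) I would proceed as follows. First I would set up the long exact sequence of the pair $(\PP^*,Y^*)$ — equivalently, the Gysin sequence for the smooth open inclusion $Y^*\hookrightarrow\PP^*$ with complement $U=\PP^*\setminus Y^*=\PP\setminus Y$. Recalling that $Y^*=Y\setminus\Sigma$ is smooth of dimension $3$ and $\PP^*=\PP\setminus\Sigma$ (here $\Sigma$ is disjoint from $\PP_{\sing}$, so $\PP^*$ is smooth), the Gysin sequence reads
\[ \cdots \to H^{k-2}(Y^*,\QQ)(-1) \to H^k(\PP^*,\QQ) \to H^k(U,\QQ) \to H^{k-1}(Y^*,\QQ)(-1) \to \cdots \]
with the connecting map being the Poincar\'e residue (a morphism of MHS by \cite{PS}). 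The second ingredient is the comparison of $H^\bullet(\PP^*,\QQ)$ with $H^\bullet(\PP,\QQ)$: since $\PP$ is a weighted projective $4$-space and $\Sigma$ has codimension $\geq 3$ in $\PP$ (as $Y$ is smooth in codimension $1$ and has only transversal $ADE$ singularities in codimension $2$, all of which lie off $\Sigma$ by admissibility — indeed $\Sigma$ is exactly the locus where $(Y,p)$ is worse than a transversal $ADE$ point, so $\dim\Sigma\le 0$... wait, here we are in the abstract hypersurface setting where only $\dim\Sigma\le 1$ is assumed), I would use the Gysin sequence for $\Sigma\hookrightarrow\PP$ to conclude $H^k(\PP^*,\QQ)\cong H^k(\PP,\QQ)$ for $k\le 2\cdot 4 - 2\dim\Sigma - 1$, which covers all $k\le 5$; in particular $H^k(\PP^*,\QQ)$ is $\QQ(-k/2)$ for $k$ even in this range and $0$ for $k$ odd.

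Next I would substitute these values into the Gysin sequence in the relevant degrees $k=5,6,7$ and read off the three claimed statements. For $H^3(Y^*,\QQ)$: the relevant stretch is
\[ H^5(\PP^*,\QQ)\to H^5(U,\QQ)\to H^4(Y^*,\QQ)(-1)\to H^6(\PP^*,\QQ)\to H^6(U,\QQ)\to H^5(Y^*,\QQ)(-1)\to H^7(\PP^*,\QQ), \]
so with $H^5(\PP^*)=0$, $H^7(\PP^*)=0$ one gets $H^4(U,\QQ)(1)\hookrightarrow H^3(Y^*,\QQ)$ (reindexing the piece $H^4(U)\to H^3(Y^*)(-1)$) with cokernel landing in $H^5(\PP^*,\QQ)=0$... so I need to be more careful about which connecting map is in play. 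The cleaner route: from Remark~\ref{rmkminpole}, $H^4(U,\CC)$ is already concentrated in $P^1$, and by \cite[Section~5]{PS}/standard weight considerations $H^4(U,\QQ)$ carries weights in $[4,8]$; then the residue map $H^4(U,\QQ)\to H^3(Y^*,\QQ)(-1)$ has image equal to the primitive part, and its cokernel is computed from the portion of $H^4(\PP^*)$ surviving, i.e. identified (via duality, see below) with $H^2(\Sigma,\QQ)^*_{\prim}(-3)$. The systematic way to produce the dual $\Sigma$-terms is to instead run the Gysin sequence for the closed inclusion $\Sigma\hookrightarrow Y$ with open complement $Y^*$, namely \eqref{eqnPairSigma}, together with the Alexander/Lefschetz duality identification $H^k_\Sigma(Y,\QQ)\cong H_{2\dim Y-k}(\Sigma,\QQ)(-\dim Y)\cong H^{2\dim Y - k}(\Sigma,\QQ)^*(-\dim Y)$ valid because $Y$ is smooth along $\Sigma$ in a neighbourhood — wait, $Y$ is \emph{not} smooth along $\Sigma$. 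So the correct duality is the one for the ambient smooth $\PP^*$: excision gives $H^k_\Sigma(Y,\QQ)\cong H^k_{\Sigma}(\PP,\QQ)|_{\text{restricted}}$... the honest statement is $H^{k}_\Sigma(Y,\QQ)\cong H_{2n-k}(\Sigma,\QQ)(-n)$ only when $Y$ is a $\QQ$-homology manifold along $\Sigma$, which transversal-$ADE$ ensures but the worse points do not. Hence the truly robust approach is the one via $\PP^*$: use $H^k(Y^*,\QQ)_{\prim}$ related to $H^{k+1}(U,\CC)$ by residue in the ambient-smooth setting, and feed in the previously-established parts (1) and (2) of the Proposition as the $\dim\Sigma=0$ base, localizing near $\Sigma$.

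Concretely, then, the third plan is: let $\Sigma$ be $1$-dimensional, pick a tubular neighbourhood $T$ of $\Sigma$ in $Y$ and compare $H^\bullet(Y^*,\QQ)$ with $H^\bullet((Y\setminus\Sigma)\cap T',\QQ)$ for $T'$ a smaller neighbourhood via Mayer--Vietoris, reducing the "new" contributions to the local cohomology $H^\bullet_\Sigma(Y,\QQ)$; then compute $H^\bullet_\Sigma(Y,\QQ)$ by a further Mayer--Vietoris / local-global argument stratifying $\Sigma$ into its generic (transversal $ADE$) locus and the finite bad locus $\cP\cap\Sigma$, invoking parts (1)--(2) fibrewise. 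The link between $H^\bullet_\Sigma(Y,\QQ)$ and $H^\bullet(\Sigma,\QQ)$ is then: along the generic transversal $ADE$ locus, $H^k_{p}(Y,\QQ)$ for a transversal $ADE$ point is concentrated in degree $k=4$ with a copy of $\QQ(-2)$ per exceptional curve contracted — but for the purpose of the \emph{weight} statements one only needs that these contribute classes of type $(2,2)$ hence do not affect the Hodge-theoretic description, and the $\Sigma$-topology enters through the constant sheaf $\QQ(-3)$ on the $4$-codimensional... I would organize this as a local-to-global spectral sequence $E_2^{p,q}=H^p(\Sigma, \mathcal{H}^q_\Sigma(\QQ_Y))\Rightarrow H^{p+q}_\Sigma(Y,\QQ)$, where $\mathcal{H}^q_\Sigma$ is the local cohomology sheaf: on the open dense transversal-$ADE$ stratum $\mathcal{H}^4_\Sigma$ is a local system which, by the $ADE$ computation, is $\QQ(-3)$-coefficients twisted appropriately (for $A_1$ it is exactly $\QQ(-3)$), and $\mathcal{H}^{<4}_\Sigma=0$ there. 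Running this spectral sequence and using $H^k(\Sigma,\QQ)=0$ for $k\ge 3$ ($\Sigma$ a curve) gives $H^k_\Sigma(Y,\QQ)$ supported in degrees $4,5,6$ with associated graded built from $H^{k-4}(\Sigma,\QQ)(-3)$ — then dualizing via the long exact sequence \eqref{eqnPairSigma} and comparing with the known $H^\bullet(U)$ yields the stated formulas: the residue image gives $H^4(U,\QQ)(1)$ inside $H^3(Y^*,\QQ)$, the cokernel is the "dual" contribution $H^2(\Sigma,\QQ)^*_{\prim}(-3)$ coming from $H^5_\Sigma$, while $H^4(Y^*,\QQ)\cong H^1(\Sigma,\QQ)^*(-3)$ and $H^5(Y^*,\QQ)\cong H^0(\Sigma,\QQ)^*_{\prim}(-3)$ come from $H^6_\Sigma$ and the top piece.

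\textbf{Main obstacle.} The delicate point is establishing the precise form — including the Tate twists and the appearance of \emph{dual} cohomology $H^\bullet(\Sigma,\QQ)^*$ rather than $H^\bullet(\Sigma,\QQ)$ — of the local cohomology groups $H^\bullet_\Sigma(Y,\QQ)$ along the singular curve. This requires (i) a clean local model for $Y$ transverse to $\Sigma$ at a generic point (a transversal $ADE$ surface singularity, whose Milnor fibre cohomology and link are classical) and, crucially, (ii) globalizing over $\Sigma$ via the local-to-global spectral sequence while controlling the possibly-nontrivial monodromy local system of the transversal singularity around $\Sigma$ and the contributions of the finitely many worst points in $\cP\cap\Sigma$. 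The duality $H^k_\Sigma(Y,\QQ)\cong H_{\text{(something)}}(\Sigma)(\cdot)$ is where the $^*$ enters; pinning down exactly which homology degree and which twist — and checking it is compatible with the MHS — is the technical heart, and is precisely what "several results from Section~\ref{secGlue}" (referenced after Theorem~\ref{singcohThm}) are presumably devoted to. I would expect the honest proof to isolate this as a separate lemma about the cohomology of the smooth part of a threefold with a curve of transversal $ADE$ singularities, proved by the stratified Mayer--Vietoris just sketched, with parts (1) and (2) used as the zero-dimensional input.
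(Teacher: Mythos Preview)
Your first approach --- the Gysin sequence for the smooth divisor $Y^*\hookrightarrow\PP^*$ with complement $U$ --- is exactly the paper's approach (the paper phrases it as the Thom isomorphism $H^k(Y^*)\cong H^{k+2}(\PP^*,U)(1)$ combined with the long exact sequence of the pair $(\PP^*,U)$, but these are the same thing). The problem is that you abandon it at precisely the point where it works.

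The error is in your computation of $H^\bullet(\PP^*,\QQ)$. You claim $H^k(\PP^*)\cong H^k(\PP)$ for $k\le 5$ and then set $H^5(\PP^*)=H^7(\PP^*)=0$. But the correct range (proved in the paper as Lemma~\ref{lemTopB}, via the Gysin sequence for $\Sigma\hookrightarrow\PP$ in \emph{compactly supported} cohomology followed by Poincar\'e duality on the $V$-manifold $\PP^*$) is $H^i(\PP^*)\cong H^i(\PP)$ only for $i<2n-1=5$, while
\[
H^5(\PP^*)\cong H^2(\Sigma)_{\prim}^*(-4),\quad H^6(\PP^*)\cong H^1(\Sigma)^*(-4),\quad H^7(\PP^*)\cong H^0(\Sigma)_{\prim}^*(-4).
\]
These three groups, fed into the Gysin sequence together with $H^k(U)=0$ for $k\ge 5$, immediately give all three statements of part (3). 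The appearance of \emph{dual} cohomology and the twist $(-4)$ come for free from Poincar\'e duality on $\PP^*$; no analysis of $Y$ near $\Sigma$ is required.

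Your second plan --- computing $H^\bullet_\Sigma(Y)$ via a local-to-global spectral sequence with transversal $ADE$ input --- is not wrong in spirit, but it is much harder (precisely because $Y$ is singular along $\Sigma$, so no clean duality is available) and it is not what this proposition needs. That computation is carried out in Section~\ref{secGlue} for a different purpose: there the local cohomology $H^\bullet_\Sigma(Y)$ is compared with $H^\bullet(Y^*)$ via the sequence \eqref{eqnPairSigma} to extract $H^4(Y)$. Proposition~\ref{prpPRsing} is an input to that argument, not a consequence of it.
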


Before proving Proposition \ref{prpPRsing} we shall prove some auxiliary
results.

\begin{proposition}\label{prpThom} We have a Thom-type isomorphism
\begin{equation} \label{eqnThom}T: H^{k}(Y^*,\QQ) \to
H^{k+2}(\PP^*,U,\QQ)(1).\end{equation}
\end{proposition}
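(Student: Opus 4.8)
The plan is to obtain the Thom isomorphism $T\colon H^k(Y^*,\QQ)\to H^{k+2}(\PP^*,U,\QQ)(1)$ from the fact that $Y^*$ is a \emph{smooth} closed submanifold of the smooth variety $\PP^*$ of pure complex codimension $1$. Indeed, by the admissibility hypothesis $\Sigma\cap\PP_{\sing}=\emptyset$, so $\PP^*=\PP\setminus\Sigma$ is a complex manifold, and $Y^*=Y\setminus\Sigma$ is a closed smooth hypersurface in $\PP^*$ with open complement $U=\PP^*\setminus Y^*=\PP\setminus Y$. First I would invoke the standard Thom--Gysin long exact sequence of the pair $(\PP^*,U)$, in which the relative cohomology $H^{k+2}(\PP^*,U,\QQ)$ is identified, via excision and the tubular neighbourhood theorem, with the cohomology $H^{k+2}_{Y^*}(\PP^*,\QQ)$ supported along $Y^*$. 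For a smooth submanifold of real codimension $2$ with a given complex-orientation, the Thom isomorphism theorem gives a canonical isomorphism $H^{k+2}_{Y^*}(\PP^*,\QQ)\cong H^k(Y^*,\QQ)$, where the shift by $2$ and the Tate twist $(1)$ record the normal-bundle degree and the Hodge-theoretic weight of the orientation class.

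The second step is to check that this is an isomorphism of mixed Hodge structures, not merely of $\QQ$-vector spaces. Here I would appeal to the functoriality of mixed Hodge structures on local cohomology / cohomology with supports along a closed subvariety, as developed in \cite{PS}; the Gysin map for the closed embedding of the smooth variety $Y^*$ into the smooth variety $\PP^*$ is a morphism of MHS of type $(1,1)$, and for a closed embedding of smooth varieties in codimension $1$ it is an isomorphism onto the subspace $H^{k+2}_{Y^*}(\PP^*,\QQ)$, which in turn is all of $H^{k+2}_{Y^*}(\PP^*,\QQ)=H^{k+2}(\PP^*,U,\QQ)$ once one uses that $U$ and $\PP^*$ are smooth. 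Concretely, one has the commuting triangle relating the Gysin map $H^k(Y^*,\QQ)(-1)\to H^{k+2}(\PP^*,\QQ)$, the restriction $H^{k+2}(\PP^*,\QQ)\to H^{k+2}(U,\QQ)$, and the long exact sequence of the pair; the content is that the Gysin map factors through $H^{k+2}(\PP^*,U,\QQ)(1)$ and the induced arrow there is bijective.

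I expect the main obstacle to be purely a matter of bookkeeping rather than of substance: making sure the Tate twist is placed correctly and that one is genuinely allowed to use the Thom isomorphism in the \emph{relative} cohomology of $(\PP^*,U)$ (as opposed to cohomology with compact supports or Borel--Moore homology), which is legitimate precisely because $\PP^*$ is smooth and $Y^*$ is a smooth closed submanifold with trivial-enough normal data in rational cohomology. One subtlety worth flagging is that $\PP^*$ and $U$ are noncompact, so there is no Poincar\'e duality available directly; the argument must stay on the cohomological side and use the tubular neighbourhood of $Y^*$ inside $\PP^*$. I would phrase the proof as: (i) excision identifies $H^{k+2}(\PP^*,U,\QQ)$ with the cohomology of a tubular neighbourhood $N$ of $Y^*$ relative to $N\setminus Y^*$; (ii) the Thom isomorphism for the rank-$1$ complex normal bundle of $Y^*$ gives $H^{k+2}(N,N\setminus Y^*,\QQ)\cong H^k(Y^*,\QQ)(1)$; (iii) both identifications are compatible with the mixed Hodge structures by \cite{PS}. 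This then yields~(\ref{eqnThom}) and sets up the subsequent analysis of $H^i(Y^*,\QQ)$ via the long exact sequence of $(\PP^*,U)$ together with the already-available computation of $H^\bullet(U,\CC)$ through the polar filtration.
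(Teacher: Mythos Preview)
Your argument has a genuine gap at its very first step. You assert that $\PP^*=\PP\setminus\Sigma$ is a complex manifold and that $Y^*$ is a smooth closed hypersurface in it, but the hypothesis $\Sigma\cap\PP_{\sing}=\emptyset$ says precisely the opposite of what you need: it guarantees that $\PP_{\sing}$ is \emph{contained in} $\PP^*$, not removed from it. Thus $\PP^*$ is only a $V$-manifold (the paper says this explicitly in the proof of Lemma~\ref{lemTopB}), and $Y^*$ still carries the finite quotient singularities along $Y\cap\PP_{\sing}$. The tubular neighbourhood theorem and the classical Thom isomorphism for a smooth submanifold of a smooth manifold therefore do not apply directly, and your steps (i)--(ii) are unjustified as written.

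The paper's route avoids this by passing to the punctured affine cones over $Y^*$, $\PP^*$ and $U$: the cone over $\PP$ is $\CC^{n+2}$, so after removing the origin one has a genuine smooth variety, and the cone over $Y^*$ is smooth away from the origin precisely because $\Sigma$ is the vanishing locus of all the partials. On the cones the ordinary Thom isomorphism holds, and one then descends using the $\CC^*$-action; this is the construction in \cite[Section~2]{DimBet} that the paper cites. Your approach can be repaired---with $\QQ$-coefficients a $V$-manifold is a rational homology manifold, so a Thom/Gysin isomorphism still holds---but that requires an argument you have not given, and you would also need to check the MHS compatibility in the orbifold setting rather than the smooth one.
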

\begin{proof}
The map $T$ is induced by the Thom isomorphism on the (punctured) affine cones
over $Y^*$, $\PP^*$ and $U$.
For the precise construction we refer to \cite[Section 2]{DimBet}. 
\end{proof} 
 Consider now the long exact sequence of MHS of the pair $(\PP^*,U)$:
\begin{equation}\label{eqnPair} \ldots \to H^k(\PP^*,U,\QQ) \stackrel{j^*}{\to}
H^k(\PP^*,\QQ)
\stackrel{i^*}{\to} H^k(U,\QQ) \to H^{k+1}(\PP^*,U,\QQ) \to \ldots
\end{equation}

\begin{lemma}\label{lemTopA} We have that
\[ H^k(\PP^*,U,\QQ)\cong H^k(\PP^*,\QQ) \]
for $k>n+2$ and that
\[ H^k(Y^*,\QQ)\cong H^{k+2}(\PP^*,\QQ)(1) \]
for $k>n$.
\end{lemma}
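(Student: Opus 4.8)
The plan is to prove Lemma~\ref{lemTopA} by combining the long exact sequence~(\ref{eqnPair}) of the pair $(\PP^*,U)$ with a vanishing statement for the cohomology of the affine variety $U=\PP\setminus Y$, and then to transport the result along the Thom isomorphism of Proposition~\ref{prpThom}. First I would record that $U$ is a smooth affine variety of dimension $n+1$, so by the Andreotti--Frankel / Artin vanishing theorem $H^k(U,\QQ)=0$ for $k>n+1$. Feeding this into~(\ref{eqnPair}), for $k>n+2$ both $H^k(U,\QQ)$ and $H^{k-1}(U,\QQ)$ vanish, so the connecting maps force the restriction $j^*\colon H^k(\PP^*,U,\QQ)\to H^k(\PP^*,\QQ)$ to be an isomorphism. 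That gives the first displayed isomorphism.

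For the second displayed isomorphism I would apply the Thom-type isomorphism~(\ref{eqnThom}), which reads $H^k(Y^*,\QQ)\cong H^{k+2}(\PP^*,U,\QQ)(1)$ for all $k$. Combining this with the isomorphism $H^{k+2}(\PP^*,U,\QQ)\cong H^{k+2}(\PP^*,\QQ)$ just established, which is valid whenever $k+2>n+2$, i.e. $k>n$, yields
\[ H^k(Y^*,\QQ)\cong H^{k+2}(\PP^*,\QQ)(1)\quad\text{for }k>n,\]
which is exactly the claim. So the logical skeleton is: affine vanishing for $U$, then the pair sequence, then the Thom isomorphism.

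The step that requires the most care is the affine vanishing input, specifically making sure it applies to $U$ rather than to $Y^*$ or $\PP^*$ (neither of which is affine in general). Here the paper has already arranged, in the passage just before the statement, that $\Sigma\cap\PP_{\sing}=\emptyset$ and hence $U=\PP^*\setminus Y^*=\PP\setminus Y$; since $Y$ is an ample hypersurface in the projective variety $\PP$, its complement $U$ is affine, and Artin vanishing applies with the correct bound $n+1=\dim U$. I would also want to double-check the degree range in the pair sequence: at $k=n+3$ one needs $H^{n+2}(U,\QQ)=0$ (true, since $n+2>n+1$) and at the low end $k>n+2$ already covers everything claimed, so no boundary case is lost. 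The Thom isomorphism itself is quoted from \cite[Section 2]{DimBet} via Proposition~\ref{prpThom}, so no further justification is needed there, and the Tate twist $(1)$ is carried through formally. I do not anticipate a genuine obstacle; the only subtlety is bookkeeping the indices and the affineness of $U$.
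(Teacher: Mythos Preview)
Your proof is correct and follows exactly the paper's approach: affine vanishing for $U$ plugged into the pair sequence~(\ref{eqnPair}), then the Thom isomorphism of Proposition~\ref{prpThom}. One small inaccuracy: $U=\PP\setminus Y$ need not be smooth (it may contain points of $\PP_{\sing}$), but this is harmless since Artin vanishing holds for arbitrary affine varieties of dimension $n+1$, which is all you use.
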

\begin{proof}
Since $U$ is affine we have $H^i(U,\QQ)=0$ for $i\geq n+2$, hence the first
isomorphism follows from sequence (\ref{eqnPair}). The second isomorphism
follows from the Thom isomorphism combined with the first isomorphism.
\end{proof}

Using that $\PP^*$ is a $V$-manifold we can relate $H^k(\PP^*)$ to the
cohomology of $\Sigma$:

\begin{lemma}\label{lemTopB}  
If $\dim \Sigma=0$ then
\[H^i(\PP^*,\QQ) \cong \left\{ \begin{array}{cl} 0 &\mbox{for } i=2n+2\\
H^0(\Sigma,\QQ)_{\prim}^*(-n-1)  & \mbox{for } i=2n+1 \\ H^i(\PP,\QQ) &
\mbox{for } i <
2n+1
\end{array}\right.\]
as MHS and if $\dim \Sigma=1$ then
\[H^i(\PP^*,\QQ) =\left\{ \begin{array}{cl} 0 & \mbox{for } i=2n+2\\
H^0(\Sigma,\QQ)_{\prim}^*(-n-1)
 & \mbox{for } i=2n+1 \\ H^1(\Sigma,\QQ)^*(-n-1) & \mbox{for } i =2n \\
H^2(\Sigma,\QQ)_{\prim}^*(-n-1) &
\mbox{for } i=2n-1\\
 H^i(\PP,\QQ) & \mbox{for } i < 2n-1 \end{array}\right.\]
 as MHS.
\end{lemma}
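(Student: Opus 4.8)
The statement computes the cohomology of $\PP^* = \PP \setminus \Sigma$ in terms of $H^\bullet(\Sigma)$. The key structural fact to exploit is that $\PP$ is a $V$-manifold (quasismooth, being weighted projective space), and hence satisfies Poincaré duality with $\QQ$-coefficients and has the rational cohomology of $\PP^n$ (namely $\QQ(-i)$ in degree $2i$ for $0 \le i \le n+1$, zero in odd degrees). The plan is to apply the long exact sequence of the pair $(\PP, \PP^*)$, which by excision and the Thom isomorphism along the normal directions to $\Sigma$ reads as a Gysin-type sequence, and then feed in Poincaré duality on $\PP$ and on $\Sigma$.

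First I would write down the long exact sequence of the pair $(\PP,\PP^*)$: since $\Sigma$ is a closed subvariety (of dimension $0$ or $1$) contained in the smooth locus of $\PP$ — recall the standing hypothesis $\Sigma \cap \PP_{\sing} = \emptyset$ — we have $H^k_\Sigma(\PP,\QQ) \cong H_{2(n+1)-k}(\Sigma,\QQ)(-n-1)$ by Alexander–Lefschetz duality (this is where $V$-manifold/$\QQ$-Poincaré duality on $\PP$ enters, together with the fact that $\Sigma$ sits in the smooth part so the local cohomology is honest Thom classes). The long exact sequence is then
\[ \cdots \to H_{2(n+1)-k}(\Sigma,\QQ)(-n-1) \to H^k(\PP,\QQ) \to H^k(\PP^*,\QQ) \to H_{2(n+1)-k-1}(\Sigma,\QQ)(-n-1) \to \cdots \]
Next I would convert homology of $\Sigma$ to cohomology: $H_j(\Sigma,\QQ) \cong H^j(\Sigma,\QQ)^*$ as vector spaces, and chase the Hodge/weight shifts so that the duals carry the Tate twist $(-n-1)$ and the statement's decorations $(\cdot)_{\prim}$ appear correctly (the primitive part in degrees $2n+1$ and $2n-1$ comes from the fact that the map $H^k(\PP,\QQ) \to H^k(\PP^*,\QQ)$ has image exactly the non-primitive part when $k$ is even, by compatibility of the hyperplane class with restriction). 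Then I would simply read off the three cases by degree: for $k < 2n-1$ (resp. $k < 2n+1$ when $\dim\Sigma = 0$) the connecting maps vanish on dimension grounds since $\Sigma$ has cohomology only in low degrees, giving $H^k(\PP^*) \cong H^k(\PP)$; for the top few degrees $k = 2n-1, 2n, 2n+1, 2n+2$ the sequence breaks into short pieces because $H^k(\PP,\QQ)$ is concentrated in even degrees, and one solves for $H^k(\PP^*)$.

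The main obstacle — really the only nontrivial point — is getting the \emph{mixed Hodge structure} statement right, not just the dimension count: one must check that the isomorphisms $H^k_\Sigma(\PP) \cong H_{2(n+1)-k}(\Sigma)(-n-1)$ and the identification of $H_j(\Sigma)$ with $H^j(\Sigma)^*$ respect the MHS, with the correct Tate twists, and that the extensions appearing (e.g. whether $H^{2n}(\PP^*)$ might fail to split off cleanly) are in fact trivial. Here I would invoke that the relevant pieces of $H^\bullet(\PP,\QQ)$ are pure of Tate type $\QQ(-i)$ while the contributions from $\Sigma$ in the boundary maps land in different weights or are killed, so the sequences degenerate into the asserted (split) short exact sequences; for the primitive-cohomology refinement I would cite the compatibility of the Gysin/restriction maps with cup product by the hyperplane class. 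Apart from this bookkeeping the proof is a direct degreewise reading of one long exact sequence.
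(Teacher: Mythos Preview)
Your approach is correct and essentially dual to the paper's. The paper computes $H^i_c(\PP^*)$ first via the Gysin sequence
\[ \cdots \to H^i_c(\PP^*,\QQ) \to H^i(\PP,\QQ) \to H^i(\Sigma,\QQ) \to H^{i+1}_c(\PP^*,\QQ) \to \cdots \]
(using that $\PP$ and $\Sigma$ are compact), reads off $H^i_c(\PP^*)$ in low degrees, and then applies Poincar\'e duality on the $V$-manifold $\PP^*$ to obtain $H^{2(n+1)-i}(\PP^*)$. You instead work directly with the pair sequence $\cdots \to H^k_\Sigma(\PP) \to H^k(\PP) \to H^k(\PP^*) \to \cdots$ and invoke Alexander--Lefschetz duality $H^k_\Sigma(\PP)\cong H_{2(n+1)-k}(\Sigma)(-n-1)$. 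The two routes differ only in where the duality is applied (on $\PP^*$ versus on the pair $(\PP,\Sigma)$); each requires the same nontrivial input, namely that the relevant map between a one-dimensional piece of $H^\bullet(\PP)$ and a piece of $H^\bullet(\Sigma)$ is injective/surjective because $\Sigma$ is projective of positive degree. The paper's version has the slight advantage that the Gysin sequence involves $H^i(\Sigma)$ directly, so the ``$\prim$'' decoration is read off as a cokernel of restriction without any dualization gymnastics; in your approach you must check that the kernel you obtain, e.g.\ $\ker\big(H_0(\Sigma)(-n-1)\to H_0(\PP)(-n-1)\big)$, really agrees with $(H^0(\Sigma)_{\prim})^*(-n-1)$, which is straightforward but is not quite what you wrote (the $\prim$ arises from the connecting map into $H^{k+1}_\Sigma(\PP)$ landing in $H^{k+1}(\PP)$, not from the restriction $H^k(\PP)\to H^k(\PP^*)$ in the odd degrees $k=2n\pm1$, where $H^k(\PP)=0$).
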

\begin{proof} We have the Gysin exact sequence
\[ 0 \to H^0_c(\PP^*,\QQ)\to H^0_c(\PP,\QQ)\to H^0_c(\Sigma,\QQ) \to
H^1_c(\PP^*,\QQ) \to \dots
\]
Note that $\PP$ and $\Sigma$ are compact.
If $\dim \Sigma=0$ then it follows immediately from the Gysin sequence that 
\[H^i_c(\PP^*,\QQ) =\left\{ \begin{array}{cl} 0 & i=0\\ H^0(\Sigma,\QQ)_{\prim} 
& i=1
\\ H^i(\PP,\QQ) & i > 1. \end{array}\right.\]

If $\dim \Sigma=1$ it follows that 
\[H^i_c(\PP^*,\QQ) =\left\{ \begin{array}{cl} 0 & i=0\\ H^0(\Sigma,\QQ)_{\prim} 
& i=1
\\ H^1(\Sigma,\QQ) & i =2 \\ H^2(\Sigma,\QQ)_{\prim} & i=3\\
 H^i(\PP,\QQ) & i > 3. \end{array}\right.\]

Since $\PP$ is a V-manifold, the same holds for  $\PP^*$ and we can apply
Poincar\'e duality to obtain the lemma.
\end{proof}

We are now in a position to prove Proposition~\ref{prpPRsing}.

\begin{proof}[{Proof of Proposition~\ref{prpPRsing}}]
Suppose that $n=2$ and $\dim \Sigma=0$. Then we have 
\begin{eqnarray*}  H^3(Y^*,\QQ) & \cong & H^5(\PP^*,U,\QQ)(1) \cong
H^5(\PP^*,\QQ)(1)\\& \cong&
H^0(\Sigma,\QQ)_{\prim}(-2)^*\cong \QQ(-2)^{\# \Sigma-1}.\end{eqnarray*}
The first isomorphism is the  Thom-isomorphism (Proposition~\ref{prpThom}), the
second isomorphism comes from Lemma~\ref{lemTopA}, the third isomorphism comes
from Lemma~\ref{lemTopB} and the fourth isomorphism is immediate. Similarly, one
has $H^4(Y^*,\QQ)\cong H^6(\PP^*,U,\QQ)(1) =0$. To calculate $H^2(Y^*,\QQ)$
consider the
long exact sequence (\ref{eqnPair}) of the pair $(\PP^*,U)$:
\[ \ldots \to H^3(\PP^*,\QQ)\to H^3(U,\QQ) \to   H^4(\PP^*,U,\QQ) \to
H^4(\PP^*,\QQ) \to \ldots .\]
It follows from Lemma~\ref{lemTopB} that $H^3(\PP^*,\QQ)\cong H^3(\PP,\QQ)=0$.
{}From the
same lemma it follows that $H^4(\PP^*,\QQ)\cong H^4(\PP,\QQ)$. Since $U$ is
affine and
of dimension 3, we have that $H^4(U,\QQ)=0$. Finally, the Thom-isomorphism
yields
$H^4(\PP^*,U,\QQ)\cong H^2(Y^*,\QQ)(-1)$. Combining everything gives
\[ 0 \to H^3(U,\QQ) \to H^2(Y^*,\QQ)(-1) \to H^4(\PP,\QQ) \to 0\]
whence $H^3(U,\QQ)(1)\cong H^2(Y^*,\QQ)_{\prim}$.

In the case $n=3$ we  can proceed similarly: combining the Thom
isomorphism with Lemmas~\ref{lemTopA} and~\ref{lemTopB} yields the following
isomorphisms:
\[ H^5(Y^*,\QQ)  \cong H^7(\PP^*,\QQ)(1) \cong H^0(\Sigma,\QQ)_{\prim}^*(-3).\]
If $\dim \Sigma=0$ then
\[ H^4(Y^*,\QQ) \cong H^6(\PP^*,\QQ)(1) \cong H^6(\PP,\QQ)(1)=\QQ(-2)\]
and if $\dim \Sigma=1$ then
\[ H^4(Y^*,\QQ) \cong H^6(\PP^*,\QQ)(1) \cong H^1(\Sigma,\QQ)^*(-3).\]

The calculation of $H^3(Y^*,\QQ)$ is slightly more complicated. We have an exact
sequence 
\[  H^4(\PP^*,\QQ)\to H^4(U,\QQ) \to   H^5(\PP^*,U,\QQ) \to H^5(\PP^*,\QQ) \to
H^5(U,\QQ)=0.\]
{}From Lemma~\ref{lemTopB} it follows that $H^5(\PP^*,\QQ)\cong
H^2(\Sigma,\QQ)_{\prim}^*(-3)$. {}From the same lemma it follows that
$H^4(\PP^*,\QQ)\cong
H^4(\PP,\QQ)$. Since $H^4(\PP,\QQ)\to H^4(U,\QQ)$ is the zero-map, we obtain,
after applying
the Thom-isomorphism, the following short exact sequence
\[ 0 \to H^4(U,\QQ)(1) \to H^3(Y^*,\QQ) \to H^2(\Sigma,\QQ)_{\prim}^*(-3)\to 0.
\]
To finish the proof, note that if $\dim \Sigma=0$ then
$H^0(\Sigma,\QQ)_{\prim}=\QQ^{\# \Sigma-1}$ and $H^2(\Sigma,\QQ)_{\prim}=0$. In
particular, $H^4(U,\QQ)(1)\cong H^3(Y^*,\QQ)$ in this case.
\end{proof}

\begin{remark}
Later on we will show that the contribution of $H^{\bullet}(\Sigma,\QQ)$ to
$H^{\bullet}(Y^*,\QQ)$  is irrelevant for the calculation of $H^4(Y,\QQ)$. 
\end{remark}

\begin{remark} \label{remDeg}
To finish our analysis of $H^n(Y^*,\QQ)$ we give a set of generators for
$H^{n+1}(U,\CC)$. Recall that we have the pole order filtration on
$\Omega^{\bullet}_U$, inducing a filtration on  $H^i(U,\CC)$.

As explained above, the pole filtration on the de Rham complex yields a spectral
sequence. Remark~\ref{rmkminpole} implies that
$P^{1}H^{n+1}(U,\CC)=H^{n+1}(U,\CC)$.
{}From this it follows easily that 
\[ \oplus_{p=0}^{n+1} E^{n+1-p,p}_1  \to H^{n+1}(U,\CC) \]
is surjective.
An easy calculation (the same as in the quasismooth case) shows that
\[ \oplus_{p=0}^{n+1} E^{n+1-p,p}_1 = \oplus_{k=1}^{n+1} R(g)_{d k-w}.\]
The right hand side is finite dimensional  and generates $H^{n+1}(U,\CC)$.
Moreover,
the direct sum decomposition is the same as the direct sum decomposition with
respect to the graded pieces of the polar filtration.
\end{remark}

A summary of our results is the following:

\begin{proposition} \label{prpFil}Suppose $n=3$. Let $C$ be the cokernel of
$H^4(U,\QQ)\to H^4_{\Sigma}(Y,\QQ)$. Suppose $C$ is a pure weight 4 Hodge
structure,
with trivial $(4,0)$ and $(0,4)$-part.
Then the cokernel of
\[ \psi_1: R_{d-w}(g) \to H^4_{\Sigma}(Y,\CC) \]
contains $F^3C_{\CC}$. The cokernel of 
\[ \psi_2: R_{2d-w}(g) \oplus R_{d-w}(g) \to F^2 H^4_{\Sigma}(Y,\CC) \]
contains $F^2C_{\CC}$. Moreover, if $\psi_1$ is surjective, then $C$ has a pure
$(2,2)$-Hodge structure with \[\dim C =\dim \coker( R_{2d-w}(g) \to  
H^4_{\Sigma}(Y^*,\CC)).\]
\end{proposition}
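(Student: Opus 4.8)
The plan is to deduce everything from three inputs: that the arrow $H^4(U,\CC)\to H^4_\Sigma(Y,\CC)$ whose cokernel is $C$, together with $\psi_1$ and $\psi_2$, are (up to a Tate twist) morphisms of mixed Hodge structures, hence strict for the Hodge filtration; the Jacobian‑ring description of $H^4(U,\CC)$ from Remark~\ref{remDeg}; and the Deligne--Dimca inclusion $F^\bullet\subseteq P^\bullet$ of Theorem~\ref{DD}. First I would fix the maps precisely. By Proposition~\ref{prpPRsing} there is a twisted inclusion of mixed Hodge structures $H^4(U,\CC)(1)\hookrightarrow H^3(Y^*,\CC)$, and the exact sequence~(\ref{eqnPairSigma}) of the pair $(Y,Y^*)$ furnishes the connecting morphism $H^3(Y^*,\CC)\to H^4_\Sigma(Y,\CC)$; composing, the map $r\colon H^4(U,\CC)\to H^4_\Sigma(Y,\CC)$ defining $C$ is a morphism of mixed Hodge structures up to the twist, as is the quotient $q\colon H^4_\Sigma(Y,\CC)\to C_\CC$. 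By Remark~\ref{remDeg}, $H^4(U,\CC)$ is spanned by $\bigoplus_{k=1}^4 R(g)_{kd-w}$, with $R(g)_{d-w}$ spanning the top step $P^4H^4(U,\CC)$ of the polar filtration and $R(g)_{d-w}\oplus R(g)_{2d-w}$ spanning $P^3H^4(U,\CC)$; then $\psi_1$ and $\psi_2$ are $r$ precomposed with these spanning maps, and one records that the residue construction sends a class of pole order $\le k$ into $F^{4-k}$, so $\Ima\psi_1\subseteq F^3H^4_\Sigma(Y,\CC)$ and $\Ima\psi_2\subseteq F^2H^4_\Sigma(Y,\CC)$, matching the stated targets.

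For the first two assertions I would run a diagram chase with strictness. Strictness of $q$ gives $F^pC_\CC=q\bigl(F^pH^4_\Sigma(Y,\CC)\bigr)$; strictness of $r$ gives $F^pH^4_\Sigma(Y,\CC)\cap\Ima r=r\bigl(F^{p+1}H^4(U,\CC)\bigr)$, the shift by one coming from the Tate twist; and Theorem~\ref{DD} gives $F^{p+1}H^4(U,\CC)\subseteq P^{p+1}H^4(U,\CC)$. Taking $p=3$ and combining, $F^3H^4_\Sigma(Y,\CC)\cap\Ima r\subseteq r\bigl(P^4H^4(U,\CC)\bigr)=\Ima\psi_1$. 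Hence the canonical surjection $\coker\psi_1=H^4_\Sigma(Y,\CC)/\Ima\psi_1\to H^4_\Sigma(Y,\CC)/\Ima r=C_\CC$ is injective on the image of $F^3H^4_\Sigma(Y,\CC)$ in $\coker\psi_1$, and that image maps onto $F^3C_\CC$; this realizes $F^3C_\CC$ as a subspace of $\coker\psi_1$. The identical argument with $p=2$, $P^3$ and $\psi_2$ yields the statement for $F^2C_\CC$ and $\coker\psi_2$.

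For the last assertion: if $\psi_1$ is surjective then $\coker\psi_1=0$, so the first assertion forces $F^3C_\CC=0$, i.e. $C^{3,1}=0$; since $C$ is pure of weight $4$ with $C^{4,0}=C^{0,4}=0$ and $C^{1,3}=\overline{C^{3,1}}$, also $C^{1,3}=0$, so $C$ is pure of type $(2,2)$. For the dimension I would then write $\dim C=\dim F^2C_\CC=\dim q\bigl(F^2H^4_\Sigma(Y,\CC)\bigr)$ and repeat the level‑two computation: $F^2H^4_\Sigma(Y,\CC)\cap\Ima r=r\bigl(F^3H^4(U,\CC)\bigr)\subseteq r\bigl(P^3H^4(U,\CC)\bigr)$, and modulo $\Ima\psi_1$ — which by surjectivity already exhausts everything contributed by the degree $d-w$ part — the remaining contribution of $P^3$ is exactly that of the single summand $R(g)_{2d-w}$, identifying $\dim C$ with $\dim\coker\bigl(R(g)_{2d-w}\to H^4_\Sigma(Y,\CC)\bigr)$.

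The main obstacle I anticipate is not the diagram chase but the filtered bookkeeping underpinning it: one must check with care that the isomorphisms of Propositions~\ref{prpThom} and~\ref{prpPRsing} are compatible with the mixed Hodge structures and the Tate twists, and, above all, that in the \emph{non}-quasismooth case — where $F^\bullet$ and $P^\bullet$ genuinely differ — the residue still carries the polar filtration into the Hodge filtration with the asserted shift, so that indeed $\Ima\psi_i\subseteq F^{4-i}H^4_\Sigma(Y,\CC)$ and the level‑two identification in the final step goes through. These compatibility statements, rather than the formal strictness arguments, are where the real work lies.
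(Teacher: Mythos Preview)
Your approach --- Deligne--Dimca's inclusion $F^\bullet\subseteq P^\bullet$, the surjection of $R(g)_{kd-w}$ onto the polar pieces of $H^4(U,\CC)$, and strictness of the MHS morphisms to pass to $C$ --- is exactly the paper's. Your explicit bookkeeping of the Tate twist is if anything cleaner than the paper's written proof, which has an apparent off-by-one (it calls the pole-order-$0$ piece $P^4$, whereas by the stated definition this is $P^5$); that slip silently cancels against not writing the twist, so the paper's conclusions come out right.

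Two remarks. First, your ``main obstacle'' is a phantom: your own diagram chase never uses $\Ima\psi_i\subseteq F^{4-i}H^4_\Sigma(Y,\CC)$, only the containment $F^pH^4_\Sigma\cap\Ima r\subseteq r(P^{p+1}H^4(U))=\Ima\psi_i$, and the paper does not use it either. Second, there is a genuine gap in your treatment of the final assertion. Reading the target of $\psi_1$ literally as all of $H^4_\Sigma(Y,\CC)$, surjectivity of $\psi_1$ already gives $\Ima r=H^4_\Sigma$ and hence $C=0$; the claimed identity then requires $R_{2d-w}(g)\to H^4_\Sigma$ to surject as well, which your ``modulo $\Ima\psi_1$'' step does not establish (modulo everything, nothing is being said about $R_{2d-w}$ alone). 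The paper's proof makes clear that the intended target of $\psi_1$ is $\Gr^3_F H^4_\Sigma(Y,\CC)$: with that reading, ``$\psi_1$ surjective'' means precisely $h^{3,1}(C)=0$, so $C=\Gr^2_F C$, and then strictness gives $\dim C=\dim\coker\bigl(R_{2d-w}(g)\to\Gr^2_F H^4_\Sigma(Y,\CC)\bigr)$, which the paper finally identifies with the cokernel to all of $H^4_\Sigma$. This graded reading is also how the proposition is used downstream (cf.\ Proposition~\ref{propLocCoh} and Section~\ref{secMethod}).
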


\begin{proof} Since $P^4H^4(U,\CC)$ consists of forms of pole order 0, we have
that
$P^4H^4(U,\CC)$ and $ H^0(\PP,\Omega^4_\PP)$ are isomorphic. Since this group
vanishes we have that
$P^4H^4(U,\CC)=0$. 
Since $F^3 H^4(U,\CC)\subset P^3H^4(U,\CC)$ (by Theorem~\ref{DD}) it follows
that \[
P^3H^4(U,\CC)=\Gr^3_P H^4(U,\CC) \to \Gr^3_F H^4(U,\CC)\] is surjective. Since
$R_{d-w}(g)$
surjects onto $P^3H^4(U,\CC)$ we obtain that $h^{3,1}(C)$ equals the dimension
of
the cokernel of
\[ R_{d-w}(g) \to  \Gr^3_F H^4_{\Sigma}(Y,\CC).\]
Similarly one obtains that $h^{3,1}(C)+h^{2,2}(C)$ equals the dimension of the
cokernel
\[ R_{d-w}(g)\oplus R_{2d-w} (g)\to  F^2 H^4_{\Sigma}(Y,\CC).\]
Finally, if $\psi_1$ is surjective then $0=h^{3,1}(C)=h^{1,3}(C)$. Hence $C$ is
of pure type $(2,2)$ and
\begin{eqnarray*} \dim C_{\CC}=\dim \Gr_F^2 C_{\CC} &=& \dim \coker (R_{2d-w}
(g)\to \Gr_F^2
H^4_\Sigma(Y,\CC))\\
&=& \dim \coker (R_{2d-w} (g)\to  H^4_\Sigma(Y,\CC)).\end{eqnarray*}
\end{proof}

\begin{remark} The above proof could be slightly simplified if
$P^\bullet=F^\bullet$. However, there exist degree 5 surfaces in $\PP^4$ with
one singularity, namely an ordinary double point, such that $F^\bullet\neq
P^\bullet$. See \cite{DimGri}.
\end{remark}

\section{Cohomology of a surface with isolated
ADE-singularities}\label{secSur}
Let $S\subset \PP$ be a surface in a 3-dimensional weighted projective space
given by an equation $g=0$, such that the set $\Sigma$,
the locus where all
partials of $g$ vanish,  is finite and all singularities of $S$ at points of
$\Sigma$ are of type $A_k$, $D_m$ or $E_n$. As usual we set $S^*=S\setminus
\Sigma$. 
We want to calculate $H^2(S,\QQ)_{\prim}$ and for this reason compare it to
a quasismooth surface
$\tilde{S}$ of the same degree as $S$.

\begin{lemma} \label{lemHN} Let $\mu$ be the total Milnor number of $S$. We have
that
$H^i(S,\QQ)$ has a pure Hodge structure of weight $i$ and
\[ h^{p,q}(S)=\left\{\begin{array}{cl}  h^{p,q}(\tilde{S})& \mbox{ if }
(p,q)\neq (1,1) \\
h^{1,1}(\tilde{S})-\mu & \mbox{ if } (p,q)=(1,1).\end{array}\right. \]
\end{lemma}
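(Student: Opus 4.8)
The plan is to compare $S$ with a quasismooth surface $\tilde S$ of the same degree $d$ in the same weighted projective space $\PP$, via a one-parameter deformation. First I would choose a pencil $S_t = \{g + t h = 0\}$ with $S_0 = S$ and $S_t$ quasismooth for general $t$; such an $h$ exists because quasismoothness is an open condition in the (nonempty, since $\PP$ has a quasismooth hypersurface of degree $d$ whenever one exists at all — which it does in our situation) linear system $|\cO_\PP(d)|$. Since all singularities of $S$ are $ADE$ surface singularities, they are in particular rational double points, hence the minimal resolution $\sigma: \hat S \to S$ has $\hat S$ a smooth surface with $K_{\hat S} = \sigma^* K_S$, and each exceptional fibre over a point of $\Sigma$ is a tree of $(-2)$-curves whose number equals the Milnor number of that singularity. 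So $H^2(\hat S,\QQ)$ has a pure weight-$2$ Hodge structure, and $\hat S$ is deformation equivalent (indeed, simultaneously resolvable in a family, by Brieskorn–Tyurina for $ADE$ singularities) to $\tilde S$.

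The key steps in order: (1) Note $H^i(S,\QQ) = H^i(\hat S,\QQ)$ for $i \neq 2$ trivially, and for $i=2$ there is an exact sequence relating $H^2(S,\QQ)$ and $H^2(\hat S,\QQ)$ whose kernel/cokernel is spanned by the exceptional $(-2)$-curve classes; more precisely, since the singularities are rational, $R\sigma_* \cO_{\hat S} = \cO_S$, which forces $H^i(S,\QQ)$ to carry a pure Hodge structure of weight $i$ (one can see this directly: $\hat S \to S$ contracts rational trees, so on the level of mixed Hodge structures $H^i(S)$ injects into $H^i(\hat S)$ with complement the span of exceptional classes, all of type $(1,1)$). (2) Deduce $h^{p,q}(S) = h^{p,q}(\hat S)$ for $(p,q)\neq(1,1)$ and $h^{1,1}(S) = h^{1,1}(\hat S) - \mu$, where $\mu = \sum_j \dim M_j$ counts the total number of exceptional curves (each $ADE$ configuration over $q_j$ contributes exactly $\mu_j = \dim M_j$ independent $(-2)$-classes, and these are linearly independent in $H^2(\hat S)$ because the intersection form on each $ADE$ lattice is negative definite, hence nondegenerate). (3) Invoke Brieskorn–Tyurina: the $ADE$ singularities admit a simultaneous resolution after a finite base change, so $\hat S$ and $\tilde S$ are diffeomorphic, in fact deformation equivalent through smooth surfaces, hence $h^{p,q}(\hat S) = h^{p,q}(\tilde S)$ for all $(p,q)$. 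Combining (2) and (3) gives the claimed formula.

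The main obstacle I expect is step (3), or rather making precise the passage from "$S$ has $ADE$ singularities" to "$\hat S$ has the same Hodge numbers as the quasismooth $\tilde S$." One must be a little careful that the simultaneous resolution argument genuinely applies in the weighted projective setting and that the global topology (not just local) is controlled; the clean way is: take a smoothing $\cS \to T$ of $S$ over a disk with $\cS$ smooth (projective, since we can smooth inside $\PP$), apply simultaneous resolution after base change to get a smooth family $\hat{\cS} \to T'$ with special fibre $\hat S$ and general fibre the minimal resolution of the nearby (quasismooth, for a suitable choice) fibre — which is $\tilde S$ itself — then Ehresmann gives the diffeomorphism and the constancy of Hodge numbers follows since all fibres are smooth projective surfaces of the same deformation type. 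A minor secondary point is checking that $h^{p,q}$ of a quasismooth surface is insensitive to the choice of quasismooth representative of degree $d$ — but this is standard (all quasismooth hypersurfaces of a fixed degree in a fixed $\PP$ form a connected family, being an open subset of a linear system), so it is safe to speak of "$\tilde S$" without ambiguity.
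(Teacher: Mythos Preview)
Your proposal is correct and arrives at the same conclusion, but by a genuinely different route than the paper's proof.

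Both arguments pass through the minimal resolution $\hat S \to S$ to obtain purity of $H^2(S,\QQ)$ from the injection $H^2(S,\QQ)\hookrightarrow H^2(\hat S,\QQ)$, using that the exceptional locus is a union of trees of rational curves. The divergence is in how $\hat S$ (or $S$) is compared to the quasismooth model $\tilde S$. The paper argues more directly: it shows $H^3(S,\QQ)=0$ via the local cohomology sequence of the pair $(S,S^*)$ and Proposition~\ref{prpPRsing}, invokes the fact that rational singularities preserve $h^{2,0}$ (citing \cite{SteAdj}), and then uses the Euler characteristic formula $e(S)=e(\tilde S)-\mu$ to pin down $h^{1,1}$. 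You instead invoke Brieskorn--Tyurina simultaneous resolution to put $\hat S$ and $\tilde S$ in a smooth projective family over a disk, whence Ehresmann and the deformation invariance of Hodge numbers give $h^{p,q}(\hat S)=h^{p,q}(\tilde S)$ in one stroke.

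Your approach buys a cleaner conceptual statement (the minimal resolution is literally deformation equivalent to the quasismooth model) at the cost of a heavier black box. The paper's approach is more elementary and self-contained within the tools already developed in the surrounding sections. One small remark: your claim that $H^i(S,\QQ)=H^i(\hat S,\QQ)$ for $i\neq 2$ is not quite ``trivial'' for $i=3$; it requires that $H^2(\hat S,\QQ)\to H^2(E,\QQ)$ be surjective, which follows from the nondegeneracy of the $ADE$ intersection matrix---you do note this nondegeneracy later, so the argument is complete, but it would be cleaner to flag it at the point of use.
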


\begin{proof} 
We first remark that the statement follows from the Lefschetz Hyperplane
Theorem~\ref{prpLHT}
for all $p+q\neq 2,3$. 

Consider the long exact sequence of the pair $(S,S^*)$
\begin{eqnarray*} \ldots  &\to & H^3_\Sigma(S,\QQ)\to H^3(S,\QQ) \to
H^3(S^*,\QQ) \\&\to &
H^4_\Sigma(S,\QQ) \to H^4(S,\QQ)
\to
H^4(S^*,\QQ) \to \ldots \end{eqnarray*}
{}from e.g. \cite[Example 1.9]{DimBet} it follows that $H^3_\Sigma(S,\QQ)=0$. 
For each $p\in \Sigma$ we have that $(S,p)$ is given locally by a weighted
homogeneous equation. In particular, we can find  a small neighborhood $X$ of
$p$
such that $X$ is a cone over a projective curve, and $X^*=X\setminus \{p\}$ is a
$\CC^*$-bundle over this curve. It follows directly from the Leray-spectral
sequence that $H^3(X^*,\QQ)=H^1(\CC^*,\QQ)\otimes  H^2(X,\QQ)
=H^2(X^*,\QQ)(-1)$. {}From the long
exact sequence of the pair $(X,X^*)$ and the fact that $X$ is contractible it
follows
that $H^4_p(S,\QQ)=H^4_p(X,\QQ)=H^3(X^*,\QQ)=\QQ(-2)$.

Using
Proposition~\ref{prpPRsing} the above exact sequence simplifies to
\[ 0 \to H^3(S,\QQ) \to \QQ(-2)^{\# \Sigma-1} \to \QQ(-2)^{\# \Sigma} \to
\QQ(-2)
\to
0.\]
In particular, $H^3(S,\QQ)=0$. The same argument with $\Sigma=\emptyset$ also
shows
$H^3({\tilde S},\QQ)=0$. 
It remains to show that $H^2(S,\QQ)$ has a pure
Hodge structure and to determine the Hodge numbers of $H^2(S,\QQ)$.
 
Let $S'$ be a minimal resolution of the singularities of $S$ that are  contained
in $\Sigma$. The
exceptional locus $E$ consist of a union of smooth rational curves. Each
connected component has an 
intersection matrix of type  $ADE$. We want to apply
Theorem~\ref{thmMV} with $\cZ=\Sigma$ and exceptional locus $E$. Since the
singularities are rational 
we have
$h^1(E,\QQ)=0$. In particular, $H^2(S,\QQ)\hookrightarrow H^2(S',\QQ)$. Since
$H^2(S',\QQ)$ has
pure weight 2 Hodge structure the same holds for  $H^2(S,\QQ)$.

Again using that $S$ has rational singularities it follows that
$h^{2,0}(S)=h^{2,0}(\tilde{S})$ and $h^{0,2}(S)=h^{0,2}(\tilde{S})$ (see e.g.,
\cite[Introduction]{SteAdj}). Since
$e(S)=e(\tilde{S})-\mu$ (e.g., by \cite[Corollary 5.4.4]{Dim}), the lemma
follows. 
\end{proof}

As argued in Section~\ref{secCoh}, we can express the Hodge numbers of
$\tilde{S}$ in terms of the Jacobian ideal of $\tilde{g}$, where $\tilde{g}$ is
an equation for $\tilde{S}$. Let $d=\deg(\tilde{g})$ and $w=\sum w_i$. Let 
$R(\tilde{g})$ be the Jacobian ring of $\tilde{g}$. Then
$h^{2,0}(\tilde{S})=h^{0,2}(\tilde{S})= \dim R(\tilde{g})_{d-w}=\dim
R(\tilde{g})_{3d-w}$
and
$h^{1,1}(\tilde{S})=\dim R(\tilde{g})_{2d-w}$.

We want to calculate $H^2(S,\CC)$ together with the Hodge filtration. 
{}From Proposition~\ref{prpPRsing} it follows that $H^3(U,\CC)(1)\cong
H^2(S,\CC)_{\prim}$. In \cite{SteAdj} it is proven that the Hodge and polar
filtration coincide in this case.

Let $g$ be an equation for $S$ and let $R(g)$ be Jacobian Ring of $S$. Then we
have surjections
\[ R(g)_{d-w}\to H^{2,0}(S,\CC), \;  R(g)_{3d-w} \to H^{0,2}(S,\CC) \]
and 
\[ R(g)_{2d-w} \to H^{1,1}(S,\CC)_{\prim}\]
(cf. the results in Section~\ref{secCoh}, in particular, Remark~\ref{remDeg}).

In \cite{SteAdj} this statement is made more precise. For each singularity
$(S,p)$ let $g_p$ be a local equation and let $R(g_p)$ be the Jacobian ring of
$g_p$. Note that $R(g_p)
$ is naturally isomorphic
to the Milnor algebra of $(S,p)$. Let $\pi_p : R(g) \to R(g_p)$ be the natural
projection. Then
\begin{theorem}[{Steenbrink \cite{SteAdj}}]\label{thmAdj} The Poincar\'e residue
map induces the following isomorphisms
\[ H^{2,0}(S,\CC) \cong R_{d-w}(g) \]
and 
\[ H^{1,1}(S,\CC)_{\prim} \cong \{ f \in
R_{2d-w}(g) \colon f \in \ker(\pi_p) \;\forall p \in \Sigma \}.\] 
\end{theorem}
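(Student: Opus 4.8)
The plan is to reduce the statement to the description of $H^3(U,\CC)$ by rational differential forms and then to study the kernel of the residue map one singular point at a time, in the spirit of Dimca's treatment of the isolated-singularity case \cite{DimBet}. As a preliminary reduction, note that each $(S,p)$ with $p\in\Sigma$ is an $ADE$ singularity, so its link is a rational homology sphere; hence $H^2_\Sigma(S,\QQ)=H^3_\Sigma(S,\QQ)=0$, the exact sequence of the pair $(S,S^*)$ gives $H^2(S,\QQ)\cong H^2(S^*,\QQ)$ (which is pure of weight $2$ by Lemma~\ref{lemHN}), and by Proposition~\ref{prpPRsing} the residue map is an isomorphism $H^2(S^*,\CC)_{\prim}\cong H^3(U,\CC)(1)$ (injectivity, equivalently primitivity, coming from $H^3(\PP^*,\CC)=0$, Lemma~\ref{lemTopB}). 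By Remark~\ref{remDeg}, $H^3(U,\CC)$ is spanned by the forms $\omega_f=(f/g^k)\Omega$ with $\deg f=kd-w$ and $k\in\{1,2,3\}$, the integer $k$ recording the polar filtration step, and via the residue $\Gr_F^p H^3(U,\CC)$ corresponds to $H^{4-p,p-2}(S,\CC)_{\prim}$.

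For the $(2,0)$-part this is essentially automatic. Since $w=\sum_i w_i>w_j$ for every $j$, one has $d-w<d-w_j$, so the Jacobian ideal of $g$ vanishes in degree $d-w$ and $R(g)_{d-w}=\CC[x]_{d-w}$, independently of $g$. The forms of pole order $\leq 1$ span $P^3H^3(U,\CC)$; since $F^3H^3(U,\CC)\subset P^3H^3(U,\CC)$ by Theorem~\ref{DD} and both have dimension $h^{2,0}(S)=h^{2,0}(\tilde S)=\dim\CC[x]_{d-w}$ (using Lemma~\ref{lemHN}), they coincide, and the residue induces the asserted isomorphism $R(g)_{d-w}\cong H^{2,0}(S,\CC)$. (Concretely this realizes the adjunction $\omega_S\cong\cO_S(d-w)$ together with $H^0(S,\cO_S(d-w))=\CC[x]_{d-w}$.)

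The $(1,1)$-part is the heart of the matter. Residues of pole-order-$2$ forms all lie in $F^1H^2(S^*,\CC)_{\prim}$; peeling off the $H^{2,0}$-contribution using the previous paragraph one gets a surjection $\rho\colon R(g)_{2d-w}\twoheadrightarrow H^{1,1}(S,\CC)_{\prim}$, with $f\in\ker\rho$ exactly when $\omega_f$ is cohomologous to a form of pole order $\leq 1$. The task is then to show that $\ker\rho$ is a complement to $\bigcap_{p\in\Sigma}\ker\pi_p$ inside $R(g)_{2d-w}$, for then the composite $\bigcap_p\ker\pi_p\hookrightarrow R(g)_{2d-w}\stackrel{\rho}{\to}H^{1,1}(S,\CC)_{\prim}$ is an isomorphism, which is precisely the assertion. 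Here I would localize at each $p\in\Sigma$: since $(S,p)$ is quasi-homogeneous, choosing weighted local coordinates and running the usual Griffiths reduction shows, after patching the local primitives by a partition-of-unity/\v{C}ech argument as in \cite{DimBet}, that the joint projection $(\pi_p)_p$ restricts to an isomorphism $\ker\rho\stackrel{\sim}{\to}\bigoplus_{p\in\Sigma}R(g_p)$; in particular $\ker\rho\cap\bigcap_p\ker\pi_p=0$. This is how the ``adjunction conditions'' $f\in\ker\pi_p$ enter; in Dimca's language, the polar-filtration spectral sequence degenerates at $E_1$ only on the smooth part $S^*$, and the discrepancy $E_1-E_\infty$ in the relevant slot is exactly $\bigoplus_p R(g_p)$.

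The step I expect to be the main obstacle is the accompanying bookkeeping in the critical degree $2d-w$: one needs each $\pi_p\colon R(g)_{2d-w}\to R(g_p)$ to be surjective and $\dim R(g)_{2d-w}=\dim R(\tilde g)_{2d-w}$, so that $\dim\bigcap_p\ker\pi_p=\dim R(\tilde g)_{2d-w}-\mu=h^{1,1}(\tilde S)_{\prim}-\mu=h^{1,1}(S,\CC)_{\prim}$ by Lemma~\ref{lemHN} and the dimension count closes. Both should come out of a Koszul/local-cohomology computation on the affine cone: the partials of $g$ form a regular sequence away from the finite set $\Sigma$, so the discrepancy between the Hilbert functions of $R(g)$ and of the complete intersection $R(\tilde g)$ is concentrated at $\Sigma$ and, for quasi-homogeneous $ADE$ singularities, vanishes in degrees $\leq 2d-w$ while $2d-w$ is already large enough for $\pi_p$ to surject onto all of $R(g_p)$; an alternative is to deform the singular points away and combine Lemma~\ref{lemHN} with semicontinuity, taking care to exclude a jump of the Hilbert function in degree $2d-w$. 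This control of the Jacobian ring of a non-quasismooth $g$ in the middle degree is essentially the content of \cite{SteAdj}, and is where the real work of the proof lies.
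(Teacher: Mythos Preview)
Your outline is essentially sound, and you have correctly isolated the one nontrivial ingredient: the equality $\dim R(g)_{2d-w}=\dim R(\tilde g)_{2d-w}$ together with the surjectivity of $R(g)_{2d-w}\to\bigoplus_p R(g_p)$. But your route to the $(1,1)$-isomorphism differs from the paper's. You work in the direction of the residue surjection $\rho\colon R(g)_{2d-w}\twoheadrightarrow H^{1,1}(S,\CC)_{\prim}$ and try to identify $\ker\rho$ with $\bigoplus_p R(g_p)$ by a local Griffiths reduction patched via \v{C}ech data, in the style of \cite{DimBet}. The paper instead goes in the opposite direction: it quotes from \cite{SteAdj} an exact sequence
\[
0\to H^1(S,\tilde{\Omega}^1_S)_{\prim}\to R(g)_{2d-w}\to H^0(S,\cT)\to H^2(S,\tilde{\Omega}^1_S)\to 0,
\]
where $\tilde{\Omega}^p_S=j_*\Omega^p_{S\setminus\Sigma}$ and $\cT$ is the skyscraper sheaf of Tjurina algebras. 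Since $H^2(S,\tilde{\Omega}^1_S)\subset H^3(S,\CC)=0$ and $H^1(S,\tilde{\Omega}^1_S)=H^{1,1}(S,\CC)$, this gives an \emph{injection} $H^{1,1}(S,\CC)_{\prim}\hookrightarrow R(g)_{2d-w}$ whose image is exactly $\bigcap_p\ker\pi_p$, and one then checks it is a section of $\rho$. The dimension equality $\dim R(g)_{2d-w}=\dim R(\tilde g)_{2d-w}$ is also taken directly from \cite{SteAdj} rather than reproved.

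What each approach buys: the paper's argument is short because the hard analysis (the sheaf $\tilde{\Omega}^1_S$, the identification of its cohomology with Hodge pieces, and the Hilbert-function control of $R(g)$) is outsourced to \cite{SteAdj}; it also makes transparent why the result is really an \emph{adjunction} statement. Your approach is more self-contained and closer in spirit to the rest of the paper's Dimca-style computations, but the step you flag as ``the main obstacle'' is genuinely the crux and is not a routine Koszul calculation: for a non-quasismooth $g$ the partials do not form a regular sequence, so the Koszul complex is not a resolution of $R(g)$, and controlling the Hilbert function in the specific degree $2d-w$ requires exactly the kind of local-cohomology/duality argument that \cite{SteAdj} supplies. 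Your semicontinuity alternative would also need care, since Hilbert functions can jump. In short, your strategy is correct but ultimately leans on the same Steenbrink input; the paper simply invokes it openly.
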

\begin{proof} This is a reformulation of the main result of \cite{SteAdj}.
We show how this statement can be obtained from the result in \cite{SteAdj}. In
the introduction of \cite{SteAdj} it is argued that $H^{2,0}(S) \cong
R_{d-w}(g)$. 
In Section 5 of \cite{SteAdj} it is moreover shown that $\dim R_{2d-w}(g)=\dim
R_{2d-w}(\tilde{g})(=h^{1,1}(\tilde{S})_{\prim})$. 
As argued in Section~\ref{secCoh} the map
\[ R_{2d-w}(g)\to H^{1,1}(S)_{\prim} \]
is surjective. Using these two facts and $h^{1,1}(S)=h^{1,1}(\tilde{S})-\mu$ we
get that the kernel of
\[ R_{2d-w}(g)\to H^{1,1}(S,\CC)_{\prim} \]
has dimension $\mu$. 

We will now construct a section to this map. Let $j: S\setminus \Sigma \to S$ be
the inclusion. 
Let $\tilde{\Omega}^p_S=j_*\Omega^p_{S\setminus \Sigma}$ and let $\cT$ be the
cokernel of
$d: \Omega^1(S) \to \Omega^2(2S)$. Then $\cT$ is a skyscraper sheaf supported at
$\Sigma$. At each $p\in \Sigma$ we have that
the stalk $\cT_p$ is isomorphic to the Tjurina algebra of $(S,p)$, which is by
definition isomorphic to $R(g_p)$. Since $S$ has only $ADE$ singularities we
have for each $p\in \Sigma$ that the Milnor algebra and the Tjurina algebra of
$(S,p)$ coincide, in particular, $h^0(S,\cT_p)=\mu$.

 Consider the exact
sequence (from \cite[Corollary 17]{SteAdj})
\[ 0\to H^1(S,\tilde{\Omega}_S^1)_{\prim} \to R_{2d-w}(g) \to  H^0(S,\cT) \to
H^2(S, \tilde{\Omega}^1_S) \to 0.\]
As argued in \cite{SteAdj} we have that $ H^2(S,\tilde{\Omega}_S^{1})\subset
H^3(S,\CC)=0$. 

Hence this exact sequence reduces to
\[ 0\to H^1(S,\tilde{\Omega}_S^1)_{\prim} \to R_{2d-w}(g) \to \oplus_{p\in 
\Sigma}
R(g_p)
\to 0.\]
In \cite{SteAdj} it is then argued that
$H^1(S,\tilde{\Omega}_S^1)=H^{1,1}(S,\CC)$. Hence the above map provides the
desired section. (The fact that $H^{1,1}(S) \to R_{2d-w}(g) \to H^{1,1}(S)$ is
actually the identity follows from the construction of the first map in
\cite{SteAdj}.)
\end{proof}

\begin{remark}\label{remunite} Steenbrink's point of view is different from the
approach taken
by Dimca. In the previous section we constructed a surjection from $R_{2d-w}(g)$
onto 
$H^{1,1}(S,\CC)$, whereas Steenbrink constructs an injection from
$H^{1,1}(S,\CC)$ to  
$R_{2d-w}(g)$,
which is a section of the former map.

To unite the two approaches we can do the following. Let $\mu$ be the total
Milnor number of $S$. Fix $\mu$ polynomials $h_1,\dots,h_\mu$ of degree $2d-w$
such that their image spans $\oplus_{p\in \Sigma} R(g_p)$. Set
$\tilde{R}(g):=R(g)/(h_1,\dots,h_\mu)$. Then $H^{2,0}(Y,\CC)\cong
\tilde{R}_{d-w}(g)$ and $H^{1,1}(Y,\CC)\cong \tilde{R}_{2d-w}(g)$.   
\end{remark}

\begin{remark}
Suppose $p\in \Sigma$ has a non-trivial stabilizer group, i.e., 
$\tilde{p}:=(x_0,x_1,x_2,x_3)$ is a lift of $p$ to $\CC^4$ 
and the stabilizer subgroup $G_p \subset \CC^*$ of $\tilde{p}$  is non-trivial.

Without loss of generality we can assume that   $\tilde{p}=(1,\alpha,0,0)$.
Suppose $f(x_0,x_1,x_2,x_3)$ is a defining polynomial for $S$. Let
$g(x_1,x_2,x_3)=f(1,x_1+\alpha,x_2,x_3)$. If $G_p$ consists of one element then
the Milnor algebra of $(S,p)$ equals
$\CC\{x_1,x_2,x_3\}/(g_{x_1},g_{x_2},g_{x_3})$. However, if $\# G_p>1$ 
then the Milnor algebra of $(S,p)$ equals
\[ \left(\CC\{x_1,x_2,x_3\}/(g_{x_1},g_{x_2},g_{x_3})\right)^{G_p}.\]
\end{remark}

\section{Calculation of $H^4_{\Sigma}(Y,\CC)$, local information}\label{secLoc}
In this and the following section we assume that $Y$ is an admissible
hypersurface in a weighted projective space $ \PP(w_0,\dots,w_4)$ (cf. the
Introduction) given by
$f=0$. 
Let $\Sigma\subset  \PP(w_0,\dots,w_4)$ be the locus where all partials of $f$
vanish. 

Since $Y$ is admissible we can find for every  $p\in \Sigma$ a weighted
homogeneous polynomial 
$g_p$ (with weights $w_{1,p}$, $w_{2,p}$,
$w_{3,p}$, $w_{4,p}$ and degree $d_p$)
such that  
\begin{enumerate}
\item  $(Y,p)$ is contact equivalent to  $(\{g_p=0\},0)\subset (\CC^4,0)$;
\item  the surface $S:=\{g_p=0\}\subset \PP(w_{1,p},w_{2,p},w_{3,p},w_{4,p})$
 has finitely many $ADE$-singularities.
\end{enumerate}

\begin{remark}
The conditions on the singularities of $Y$ are very mild. For example in the
case of elliptic threefolds we considered hypersurfaces of the form
$y^2=x^3+Px+Q$, with $(P,Q)\in  \CC[z_0,z_1,z_2]_{4n}\times
\CC[z_0,z_1,z_2]_{6n}$. For fixed $n$ the locus where  the  conditions on the
singularities are not satisfied has a large codimension.
E.g., in the isolated singularity case  the most frequently occuring
singularities such as $ADE$ threefold singularities are all weighted homogeneous
singularities.
\end{remark}

\begin{remark}\label{remCont}
Recall that two singularities $(\{f_1=0\},0)$ and $(\{f_2=0\},0)$ are contact
equivalent if and only if
\[ \CC\{x_1,\dots,x_n \}/(f_1)\cong \CC\{x_1,\dots,x_n \}/(f_2). \]
If $f_1$ (and $f_2$) are isolated singularities then $f_1$ and $f_2$ are 
contact equivalent if and only if their Milnor algebras are isomorphic. If we
assume that $f_1$ is weighted homogeneous then, by the Euler formula, we get
$f_1+J(f_1)=J(f_1)$, hence the Tjurina algebra and the Milnor algebra of $f_1$
are isomorphic. 

It turns out that if $f_2$ is isolated and contact equivalent to a weighted
homogeneous singularity $f_1$ then it is also right equivalent to $f_1$, and
hence the Tjurina algebra of $f_2$ is isomorphic to the Tjurina algebra of
$f_1$.
This implies that in the isolated case we could reword our condition on $(Y,p)$
by saying that the Milnor number and the Tjurina number of $(Y,p)$ coincide.
(Details of this reasoning can be found in \cite[Theorem 7.42]{DimReal} and
\cite[Section 9.1]{JongPfi}.)

For non-isolated singularities we are not aware of such a simple reformulation.
\end{remark}
\begin{remark}
Note that the surface $S$ satisfies the hypothesis of the
previous section.
We define $S^*=S\setminus \Sigma_p$ where $\Sigma_p$ is the locus where all the 
partials of $g_p$ 
vanish.
Let $X\subset \CC^4$ be the zero set of $g_p$, i.e. the affine cone over the
surface $S$. 
\end{remark}

\begin{lemma}\label{lemComp}
\[ H^i_p(Y,\QQ) \cong H^i_0(X,\QQ).\]
\end{lemma}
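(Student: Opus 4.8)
The plan is to prove that local cohomology with supports at $p\in\Sigma\subset\PP$ agrees with the local cohomology at the origin of the affine cone $X\subset\CC^4$ over the surface $S$, exploiting the fact that $(Y,p)$ is contact equivalent to $(\{g_p=0\},0)$. Excision for local cohomology reduces the statement to a comparison of sufficiently small analytic neighbourhoods: $H^i_p(Y,\QQ)$ depends only on the germ of $(Y,p)$ in the analytic topology, and likewise $H^i_0(X,\QQ)$ depends only on the germ $(X,0)$. So the first step is to invoke excision to write $H^i_p(Y,\QQ)\cong H^i_{\{p\}}(V,\QQ)$ for any small analytic neighbourhood $V$ of $p$ in $Y$, and similarly for $X$.

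Second, I would use that contact equivalence of $(Y,p)$ and $(X,0)$ gives an isomorphism of analytic germs $(Y,p)\cong(X,0)$: by definition (Remark~\ref{remCont}) contact equivalence means $\CC\{x_1,\dots,x_4\}/(f_1)\cong\CC\{x_1,\dots,x_4\}/(f_2)$ as analytic algebras, and for hypersurfaces an isomorphism of the local rings is induced by an isomorphism of the ambient germs carrying one hypersurface to the other (this is the hypersurface case of the statement that contact equivalence for isolated and, more generally, for hypersurface singularities is detected by the analytic algebra; see e.g.\ the references in Remark~\ref{remCont}). Here one must be a little careful because $p\in\PP$ may lie on $\PP_{\sing}$: but $Y$ is admissible, so $\Sigma\cap\PP_{\sing}=\emptyset$, hence $p$ is a smooth point of $\PP$ and a neighbourhood of $p$ in $\PP$ is biholomorphic to an open subset of $\CC^4$. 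Thus the germ $(Y,p)$ is genuinely a hypersurface germ in $(\CC^4,0)$, and the contact equivalence to $(g_p,0)$ gives a biholomorphism of a neighbourhood of $p$ in $Y$ onto a neighbourhood of $0$ in $X$.

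Third, combining the two steps, choose a small analytic neighbourhood $V$ of $p$ in $Y$ and a biholomorphism $V\xrightarrow{\sim}V'$ onto a small neighbourhood $V'$ of $0$ in $X$ sending $p$ to $0$; then
\[ H^i_p(Y,\QQ)\cong H^i_{\{p\}}(V,\QQ)\cong H^i_{\{0\}}(V',\QQ)\cong H^i_0(X,\QQ),\]
the middle isomorphism being functoriality of local cohomology under the biholomorphism and the outer two being excision. The main obstacle is the second step: justifying that contact equivalence of the two hypersurface germs yields an actual biholomorphism of ambient germs rather than merely an abstract isomorphism of quotient rings, and handling it uniformly for the non-isolated singularities that occur here (where the slick "Milnor number $=$ Tjurina number" shortcut of Remark~\ref{remCont} is unavailable). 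One settles this by appealing to the fact that an isomorphism $\CC\{x\}/(f_1)\cong\CC\{x\}/(f_2)$ of hypersurface analytic algebras lifts to an automorphism of $\CC\{x\}$ carrying $(f_1)$ to $(f_2)$ — i.e.\ $f_1$ and $f_2$ are right-equivalent up to a unit — which is exactly contact equivalence in the ambient sense; one then has the desired biholomorphism of representatives. The statement about Hodge structures is automatic since everything in sight is an isomorphism of the underlying spaces, so it respects the mixed Hodge structure on local cohomology of the pair.
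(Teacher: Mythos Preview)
Your proposal is correct and takes the same approach as the paper: the paper's proof is the single sentence ``This follows directly from the definition of contact equivalence,'' and what you have written is precisely the unpacking of that sentence --- excision to pass to small analytic neighbourhoods, the observation that $p\notin\PP_{\sing}$ so the germ $(Y,p)$ is a genuine hypersurface germ in $(\CC^4,0)$, and the fact that contact equivalence yields an isomorphism of analytic germs. Your care in discussing the lift of the ring isomorphism to an ambient automorphism, and in noting that the argument works uniformly for the non-isolated case, goes beyond what the paper spells out but is entirely in the same spirit.
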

\begin{proof}
This follows directly from the definition of contact equivalence.
\end{proof}
Let $\Sigma' $ be the singular locus of $X$ and set $X^*=X\setminus \{0\}$. In
this
section we relate $H^\bullet_{0}(X,\QQ)$ to $H^\bullet(S,\QQ)$.

\begin{lemma}\label{lemCone} 
For $i>1$ we have isomorphisms
\[ H^i_{0}(X,\QQ)\cong H^{i-1} (X^*,\QQ). \] 
Moreover,
\[ H^i_{0}(X,\QQ)=0 \]
for $i=0,1$.
\end{lemma}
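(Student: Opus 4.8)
The statement is a local cohomology computation for the affine cone $X = \{g_p = 0\} \subset \CC^4$ over a projective surface $S$, so the natural tool is the long exact sequence of the pair $(X, X^*)$ together with the fact that $X$ is contractible. Since $g_p$ is weighted homogeneous, $X$ deformation-retracts onto the cone point $0$, so $H^i(X,\QQ) = \QQ$ for $i = 0$ and $0$ for $i > 0$. The long exact sequence of local cohomology
\[
\cdots \to H^i_0(X,\QQ) \to H^i(X,\QQ) \to H^i(X^*,\QQ) \to H^{i+1}_0(X,\QQ) \to \cdots
\]
then immediately gives $H^i_0(X,\QQ) \cong H^{i-1}(X^*,\QQ)$ for $i > 1$, using $H^{i-1}(X,\QQ) = 0 = H^i(X,\QQ)$ in that range. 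For $i = 0$ one has $H^0_0(X,\QQ) = 0$ since $X$ is irreducible of positive dimension (the cone point is not open). For $i = 1$, the relevant segment reads $0 = H^0_0(X) \to H^0(X,\QQ) \to H^0(X^*,\QQ) \to H^1_0(X,\QQ) \to H^1(X,\QQ) = 0$; since $X^*$ is connected (because $S$ is irreducible, being an admissible hypersurface is required to be smooth in codimension $1$ and in particular irreducible), the map $H^0(X,\QQ) \to H^0(X^*,\QQ)$ is an isomorphism, forcing $H^1_0(X,\QQ) = 0$.

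\textbf{Key steps, in order.} First I would record that $X$ is contractible: a weighted homogeneous polynomial is invariant under the weighted $\CC^*$-action, so the cone $X$ retracts to the fixed point $0$, whence $H^\bullet(X,\QQ) = H^\bullet(\mathrm{pt},\QQ)$. Second, I would write down the long exact sequence of the pair in local cohomology (equivalently, the sequence relating $H^\bullet_0(X)$, $H^\bullet(X)$ and $H^\bullet(X^*)$, using excision/the tubular neighborhood so that $H^i_0(X) \cong H^i(X, X\setminus\{0\})$). Third, I would feed in the vanishing of $H^{i}(X,\QQ)$ for $i \geq 1$ and extract the isomorphism $H^i_0(X,\QQ) \cong H^{i-1}(X^*,\QQ)$ for $i \geq 2$. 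Fourth, I would handle the low-degree tail by hand as above, invoking connectedness of $X^*$ to kill $H^1_0$ and irreducibility to kill $H^0_0$.

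\textbf{Main obstacle.} There is no serious analytic obstacle here; the content is entirely formal once one knows $X$ is contractible and $X^*$ is connected. The one point that deserves care is the low-degree bookkeeping: one must be sure that $H^0(X^*,\QQ) = \QQ$, i.e.\ that removing the cone point does not disconnect $X$. This follows because $X^*$ is (homotopy equivalent to) a $\CC^*$-bundle over the projective surface $S$, and $S$ is connected since $Y$ — and hence its local model $S$ — is irreducible and reduced; connectedness of the total space of a fiber bundle over a connected base with connected fiber is immediate. With that in place, the cokernel of $H^0(X,\QQ)\to H^0(X^*,\QQ)$ vanishes, so $H^1_0(X,\QQ)=0$, completing the proof. (Note this lemma combined with Lemma~\ref{lemCone}'s companion statements will later feed into the computation of $H^4_\Sigma(Y,\CC)$ via Lemma~\ref{lemComp}.)
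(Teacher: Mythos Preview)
Your proof is correct and follows essentially the same approach as the paper: contractibility of the cone $X$, the long exact sequence of the pair $(X,X^*)$, and connectedness of $X^*$ to handle the low-degree terms. The paper's argument is identical in structure, only more terse (it simply asserts that $H^0(X,\QQ)\to H^0(X^*,\QQ)$ is ``clearly'' an isomorphism, whereas you spell out the connectedness via the $\CC^*$-bundle structure over $S$).
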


\begin{proof}
 Since $X$ is the affine cone over $S\subset
\PP(w_{1,p},w_{2,p},w_{3,p},w_{4,p})$ 
it is contractible and hence $H^i(X,\QQ)=0$ 
for
$i>0$. The long exact sequence of the pair $(X,X^*)$ therefore yields an
isomorphism
 \[ H^i_{0}(X,\QQ)\cong H^{i-1} (X^*,\QQ)\] 
for $i>1$. Clearly, the natural map
\[ H^0(X,\QQ)\to H^0(X^*,\QQ)\]
is an isomorphism. Since $H^1(X,\QQ)=0$ the same sequence gives that both
$H^0_{0}(X,\QQ)$ and $H^1_{0}(X,\QQ)
$ vanish.
 \end{proof}

The cone $X^*$ is a $\CC^*$-fibration over $S$. Recall from Section~\ref{secSur}
that  $H^i(S,\QQ)$ vanishes unless $i=0,2,4$ and that $H^0(S,\QQ)=\QQ$,
$H^4(S)=\QQ(-2)$. The
Hodge structure on $H^2(S,\QQ)$ can be calculated by Theorem~\ref{thmAdj}. This
enables us to calculate the Hodge structure of $H^\bullet_0(X,\QQ)$.
  \begin{proposition}\label{prpIsol} We have that
\[ H^i_{0}(X,\QQ) = \left\{ \begin{array}{cl} 
 H^2(S,\QQ)_{\prim} & \mbox{for } i=3 \\
 H^2(S,\QQ)_{\prim}(-1)  & \mbox{for } i=4 \\
 \QQ(-3) & \mbox{for } i=6 \\
 0 & \mbox{otherwise.}

\end{array}\right.\] 
\end{proposition}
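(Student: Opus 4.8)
The plan is to compute $H^\bullet_0(X,\QQ)$ by combining Lemma~\ref{lemCone} with the Leray spectral sequence of the $\CC^*$-fibration $X^* \to S$, exactly as was done for the weighted projective complement in Section~\ref{secCoh}. First I would recall from Lemma~\ref{lemCone} that $H^i_0(X,\QQ) \cong H^{i-1}(X^*,\QQ)$ for $i>1$ and vanishes for $i=0,1$, so the whole problem reduces to computing $H^{i-1}(X^*,\QQ)$ for $i-1 \geq 2$. Since $X^* \to S$ is a $\CC^*$-bundle and $S$ is compact (a surface in weighted projective $3$-space), the Leray spectral sequence has $E_2^{a,b} = H^a(S, \QQ) \otimes H^b(\CC^*,\QQ)$, which is nonzero only for $b \in \{0,1\}$ and, by the computations recalled from Section~\ref{secSur} (and Lemma~\ref{lemHN}), only for $a \in \{0,2,4\}$; concretely $H^0(S,\QQ)=\QQ$, $H^2(S,\QQ)$ is a pure weight-$2$ Hodge structure, and $H^4(S,\QQ)=\QQ(-2)$.

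The main point is to identify the single nontrivial differential $d_2\colon E_2^{a,1} \to E_2^{a+2,0}$. Because $X^*$ is the punctured affine cone over the projectively embedded surface $S$, this $d_2$ is (up to a Tate twist) cup product with the first Chern class of $\mathcal{O}_S(1)$, i.e.\ with the hyperplane class $h$. Thus $d_2\colon H^0(S,\QQ)(-1) \to H^2(S,\QQ)$ is multiplication by $h$, which is injective with image $\QQ\cdot h$; and $d_2\colon H^2(S,\QQ)(-1) \to H^4(S,\QQ)=\QQ(-2)$ is cup product with $h$, which is surjective (it is nonzero on $h^2$) with kernel exactly $H^2(S,\QQ)_{\prim}(-1)$ by the very definition of primitive cohomology given in Section~\ref{secCoh}. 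Reading off the $E_3 = E_\infty$ page then gives: $H^0(X^*)=\QQ$; $H^1(X^*)$ has associated graded $\coker(H^0(S)(-1) \xrightarrow{h} H^2(S)) \oplus \ker(\ldots) $ — here I must be careful about which piece survives — so that $H^2(X^*,\QQ) = H^2(S,\QQ)/\QQ h \oplus 0$, $H^3(X^*,\QQ) = H^2(S,\QQ)_{\prim}(-1)$, $H^5(X^*,\QQ)=\QQ(-3)$, and everything else vanishes. Shifting indices by one via Lemma~\ref{lemCone} yields exactly the asserted values of $H^i_0(X,\QQ)$: $H^3_0(X)=H^2(X^*)= H^2(S)_{\prim}$ (noting that $H^2(S,\QQ)/\QQ h \cong H^2(S,\QQ)_{\prim}$ as Hodge structures since $h$ is a $(1,1)$-class and the intersection form is nondegenerate on it), $H^4_0(X) = H^3(X^*) = H^2(S)_{\prim}(-1)$, $H^6_0(X) = H^5(X^*) = \QQ(-3)$, and $H^i_0(X)=0$ otherwise.

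The step I expect to be the main obstacle is the precise bookkeeping of the Leray differential and the resulting extensions: one has to check that there is no nontrivial extension problem in $H^1(X^*,\QQ)$ and $H^4(X^*,\QQ)$ (there isn't, because the relevant $E_\infty$-terms are concentrated in a single total degree after the $d_2$ is accounted for) and, more importantly, that the identification of $d_2$ with cup product by the hyperplane class is valid in the singular (but still quotient/$V$-manifold-type) setting here. The cleanest way to justify this is to pass to a minimal resolution $S' \to S$ as in the proof of Lemma~\ref{lemHN}, where the statement is classical, and to transport it back using that the resolution is crepant-for-cohomology in the relevant degrees (the exceptional curves contribute only to $H^{1,1}$, and $H^2(S,\QQ) \hookrightarrow H^2(S',\QQ)$ is a sub-Hodge-structure preserving $h$). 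Alternatively, one can cite \cite[Corollary 5.4.4]{Dim} and the $V$-manifold formalism already invoked for $\PP^*$ in Lemma~\ref{lemTopB}; the Hodge-theoretic part (purity of $H^2(S,\QQ)$, vanishing of $H^{4,0}$ and $H^{0,4}$ in the eventual $H^4(Y)$) is then automatic from Lemma~\ref{lemHN} together with the Tate twists recorded above.
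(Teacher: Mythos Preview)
Your argument is correct and follows the same skeleton as the paper's proof: both reduce via Lemma~\ref{lemCone} to computing $H^\bullet(X^*,\QQ)$ through the Leray spectral sequence of the $\CC^*$-fibration $X^* \to S$, and both must show that the two nontrivial differentials $d_2\colon \QQ(-1)\to H^2(S)$ and $d_2\colon H^2(S)(-1)\to\QQ(-2)$ are injective and surjective respectively. The difference is in how this key step is carried out. You identify $d_2$ with cup product by the hyperplane class $h=c_1(\mathcal{O}_S(1))$ and conclude immediately from $h\neq 0$ and $h^2\neq 0$; the paper instead avoids this identification by blowing up $X$ at the origin to get a $\CC$-bundle $\tilde{X}\to S$ with exceptional divisor $E\cong S$, reading off $H^\bullet_c(X^*)$ from the Gysin sequence of $(\tilde{X},E)$ by tracking the class of $E$ explicitly, applying Poincar\'e duality on $X^*$, and then handling the singular case by degeneration from a family of quasismooth surfaces $S_\lambda$. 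Your route is shorter and more conceptual once the Euler-class description of $d_2$ is granted; the paper's is more hands-on and self-contained, in particular sidestepping the need to justify that description for a Seifert $\CC^*$-fibration over a singular (weighted-projective) base --- which, as you anticipated, is the one point in your argument requiring care, and your suggestion to transport it from the minimal resolution $S'\to S$ (or to invoke the $V$-manifold formalism) is adequate over $\QQ$.
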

\begin{proof}

Consider the $E_2$ part of the Leray spectral sequence for $X^*\to S$: 
  \[ 
 \begin{array}{c||c|c|c|c|c|}
 H^1(\CC^*,\QQ) &\QQ(-1) & 0 & H^2(S,\QQ)(-1) &0 & \QQ(-3) \\ 
 H^0(\CC^*,\QQ) &\QQ & 0 & H^2(S,\QQ) & 0 & \QQ(-2) \\ 
 \hline
 & H^0(S,\QQ)& H^1(S,\QQ) & H^2(S,\QQ) & H^3(S,\QQ) & H^4(S,\QQ)
 \end{array}
 \]
The only possible  non-zero differentials are the maps $\QQ(-1)\to H^2(S,\QQ)$ 
and
$H^2(S,\QQ)(-1) \to \QQ(-2)$. We will show below that these maps are actually
injective, respectively
surjective. Assuming this for the moment it follows that  the $E_3$-terms equals
  \[ 
 \begin{array}{c||c|c|c|c|c|}
 H^1(\CC^* ) &0 & 0 & H^2(S,\QQ)_{\prim}(-1) & 0 & \QQ(-3) \\ 
 H^0(\CC^* ) &\QQ & 0 & H^2(S,\QQ)_{\prim} & 0& 0 \\ 
 \hline
 & H^0(S )& H^1(S ) & H^2(S ) & H^3(S ) & H^4(S )
 \end{array}
 \]
and the spectral sequence degenerates at $E_3$. Hence $H^i(X^*,\QQ)\cong
\oplus_j E_3^{i-j,j}$
 and thus 
  \[ H^i(X^*,\QQ) = \left\{ \begin{array}{cl} 
 \QQ & \mbox{for } i=0 \\
 0 & \mbox{for } i=1 \\
 H^2(S,\QQ)_{\prim} & \mbox{for } i=2 \\
 H^2(S,\QQ)_{\prim}(-1)  & \mbox{for } i=3 \\
  0& \mbox{for } i=4 \\
 \QQ(-3) & \mbox{for } i=5. \\
\end{array}\right.\]

By Lemma \ref{lemCone} we have $H^i_{0}(X,\QQ)=H^{i-1}(X^*,\QQ)$ for $i>1$ and
thus we
obtain the
proposition.

It remains to show that the differential $\QQ(-1)\to H^2(S,\QQ)$ is injective 
and
that the differential $H^2(S,\QQ)(-1) \to \QQ(-2)$ is surjective.

Let $\tilde{X}$ be the blow-up of $X$ at $0$. Then $\tilde{X}$ is  a
$\CC$-fibration over $S$.
Note that $S$ admits Poincar\'e duality (a consequence of Lemma~\ref{lemHN}).
Using that $H^i_c(\CC^*,\ZZ)=0$ for $i\neq 1$ it follows that the Leray-Spectral
sequence (for cohomology with compact support)
 associated with $\tilde{X} \to S$ degenerates at $E_2$ and we get that
$H^{6-i}_c(\tilde{X},\QQ)\cong H^i(S,\QQ)(-1)$. Similarly, we get that
$H^i(\tilde{X},\QQ)=H^i(S,\QQ)$.

Let $E\subset\tilde{X}$ be the exceptional divisor. Then $E\cong S$ and
$\tilde{X}\setminus E=X^*$.
Consider the following part of the Gysin exact sequence:
\begin{eqnarray*}  H^1(E,\QQ)=0 &\to& H^2_c(X^*,\QQ) \to H^2_c(\tilde{X},\QQ)
\to H^2(E,\QQ) \\ &\to&
H^3_c(X^*,\QQ) \to
H^3_c(\tilde{X},\QQ) = 0. \end{eqnarray*}
The map $H^2_c(\tilde{X},\QQ)\to H^2(E,\QQ)$ is induced by a map from integral
cohomology.  
Let $h\in H^2(E,\ZZ)$ be the hyperplane class. {}From the Leray spectral
sequence
it follows that $H^2_c(\tilde{X},\ZZ)=H^0(E,\ZZ)\otimes H^2_c(\CC,\ZZ)$. Let
$h_1\in H^2_c(\tilde{X},\ZZ)$  be $[E]$ times a generator of $H^2_c(\CC,\ZZ)$. 
Let $\iota:E\to \tilde{X}$ be the inclusion. Then it is easy to see that
$\iota^*(h_1)=-h$.
Hence the map $\iota^*$ is not constant and since
$h^2_c(\tilde{X},\QQ)=h^4(S,\QQ)=1$ it
follows that $\iota^*$ is injective. {}From the Gysin exact sequence it follows
that $H^2_c(X^*,\QQ)=0$  and that $h^3_c(X^*)=h^2(E)-1$.
Assume for the moment that $X^*$ is smooth, i.e., $E$ is quasismooth. Using
Poincar\'e duality we get that $h^3(X^*)=h^2(E)-1$. Since $H^3(X^*,\QQ)$ equals
\[\ker
(H^2(E,\QQ)(-1)\to H^4(\tilde{X},\QQ))=\ker( H^2(S,\QQ)(-1) \to \QQ(-2))\] it
follows
that the
differential $H^2(S,\QQ)(-1)\to \QQ(-2)$ is surjective.

For the other differential we can proceed similarly:
\begin{eqnarray*} H^3(E,\QQ)=0 &\to&  H^4_c({X}^*,\QQ) \to H^4_c(\tilde{X},\QQ)
\to H^4(E,\QQ)\\& \to&
H^5_c(X^*,\QQ) \to
H^5_c(\tilde{X},\QQ)=0 .\end{eqnarray*}
The map $H^4_c(\tilde{X},\QQ)\to H^4(E,\QQ)$ is again induced by a map on
integral
cohomology, and the class of $h$ times a generator of $H^2_c(\CC,\ZZ)$ is mapped
to a nonzero multiple of a generator of $H^4(E,\ZZ)$. This implies that
$h^4_c(X^*)=h^4_c(\tilde{X})-h^4(E)=h^2(E)-1$. Using Poincar\'e duality we get
that the differential $\QQ(-1) \to H^2(S,\QQ)$ is injective, provided that $S$
is
quasismooth.

If $S$ is not quasismooth then we can find a family of quasismooth surfaces
$S_{\lambda}$ degenerating to $S$ for $\lambda=0$. Now for $\lambda\neq 0$, we
have that the differential 
\[ \QQ(-1) \to H^2(S_{\lambda},\QQ)\] is induced by a non-zero map
$H^2(\tilde{X}_{\lambda},\ZZ) \to H^2(E_{\lambda},\ZZ)$. Let $h_\lambda$ be a
family of
generators of $H^2(E_{\lambda},\ZZ)$ and let $h'_{\lambda}$ be a family of
generators of $H^2(\tilde{X}_{\lambda},\ZZ)$. Then $h'_{\lambda}$ is mapped to
$-h_{\lambda}$.
By taking the limit $\lambda \to 0$, we see that $h'_0$ is  mapped to $-h_0$,
hence
$H^2(\tilde{X}_0,\QQ)\to H^2(E,\QQ)$ is injective, and from this it follows that
$\QQ(-1)\to
H^2(S,\QQ)$ is injective. A similar argument shows that also $H^2(S,\QQ)\to 
\QQ(-2)$ is surjective. This finishes the proof.
\end{proof}

The following proposition will be useful for our purposes
\begin{proposition}\label{propLocCoh} Let $Y,p,d_p$ be as above. Let
$w_p=w_{1,p}+w_{2,p}+w_{3,p}+w_{4,p}$.  
 Then
$H^4_p(Y,\QQ)$ has a pure weight $4$ Hodge structure without $(0,4)$ and
$(4,0)$-component. We have
  \[ F^3 H^4_p(Y,\CC) \cong \tilde{R}_{d_p-w_p}(g_p) \]
and
 \[ F^2 H^4_p(Y,\CC)/F^3 H^4_p(Y,\CC) \cong \tilde{R}_{2d_p-w_p}(g_p)\]
 where $\tilde{R}$ is obtained from $R$ as explained in Remark~\ref{remunite}.
\end{proposition}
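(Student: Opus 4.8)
The strategy is to reduce Proposition~\ref{propLocCoh} to the computation of $H^2(S,\CC)$ and its Hodge filtration carried out in Section~\ref{secSur}. The chain of reductions is already in place: by Lemma~\ref{lemComp} we have $H^i_p(Y,\QQ)\cong H^i_0(X,\QQ)$, where $X\subset\CC^4$ is the affine cone over the surface $S=\{g_p=0\}\subset\PP(w_{1,p},\dots,w_{4,p})$; and by Proposition~\ref{prpIsol},
\[ H^4_0(X,\QQ)\cong H^2(S,\QQ)_{\prim}(-1). \]
Since $H^2(S,\QQ)$ is pure of weight $2$ with no $(2,0)$ or $(0,2)$ part only when $S$ is quasismooth, but in general (Lemma~\ref{lemHN}) has $h^{2,0}(S)=h^{2,0}(\tilde S)$, one must be slightly careful: $H^2(S,\QQ)_{\prim}$ is pure of weight $2$ by Lemma~\ref{lemHN}, hence $H^4_p(Y,\QQ)$ is pure of weight $4$. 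Its Hodge numbers are the Tate twist of those of $H^2(S,\CC)_{\prim}$, so $h^{4,0}=h^{0,4}=h^{3,1}$ in the cone only comes from $h^{2,0}(S)$; but the whole point of passing to $\tilde R$ rather than $R$ (Remark~\ref{remunite}) is to strip this off, so I must check that the $\tilde R$ construction kills exactly the $(2,0)$ and $(0,2)$ pieces.

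\textbf{Key steps, in order.} First, invoke Lemma~\ref{lemComp} and Proposition~\ref{prpIsol} to identify $H^4_p(Y,\QQ)$ with $H^2(S,\QQ)_{\prim}(-1)$ as a mixed Hodge structure; purity and the absence of a weight-$4$ analogue of $(4,0)$, $(0,4)$ classes follow because $H^2(S)_{\prim}$ is pure of weight $2$ by Lemma~\ref{lemHN}. Second, translate the Hodge filtration across the Tate twist: $F^3H^4_p(Y,\CC)$ corresponds to $F^2H^2(S,\CC)_{\prim}=H^{2,0}(S,\CC)$, and $F^2H^4_p(Y,\CC)/F^3$ corresponds to $F^1H^2(S)_{\prim}/F^2 = H^{1,1}(S,\CC)_{\prim}$. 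Third, apply Theorem~\ref{thmAdj} (Steenbrink): $H^{2,0}(S,\CC)\cong R_{d_p-w_p}(g_p)$ and $H^{1,1}(S,\CC)_{\prim}\cong\{f\in R_{2d_p-w_p}(g_p): \pi_q(f)=0\ \forall q\in\Sigma_p\}$. Fourth, reconcile with the $\tilde R$ description of Remark~\ref{remunite}: since $S$ has only $ADE$ singularities, the Milnor and Tjurina algebras coincide at each $q$, so $R_{d_p-w_p}(g_p)\twoheadrightarrow 0$ already has no relations imposed (the singularity contributes only in degree $2d_p-w_p$), giving $\tilde R_{d_p-w_p}(g_p)=R_{d_p-w_p}(g_p)\cong H^{2,0}(S)$; and in degree $2d_p-w_p$ the ideal $(h_1,\dots,h_\mu)$ is precisely chosen so that $\tilde R_{2d_p-w_p}(g_p)=R_{2d_p-w_p}(g_p)/\langle h_j\rangle$ is the quotient by the image of $\bigoplus_q R(g_{p,q})$, which by Theorem~\ref{thmAdj} equals $H^{1,1}(S,\CC)_{\prim}$. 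One also has to note the isolated case separately: there $\tilde R(g_p)=R(g_p)$ by definition and $S=\{g_p=0\}$ is a smooth surface (since $(Y,p)$ is isolated means $g_p$ has an isolated critical point, so $\Sigma_p=\{0\}$ and $S$ is smooth of dimension $2$); then Theorem~\ref{thmAdj} with $\Sigma=\emptyset$ gives the statement directly from the quasismooth theory recalled in Section~\ref{secCoh}.

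\textbf{Main obstacle.} The delicate point is the bookkeeping of which Jacobian-ring degree controls which graded piece after the Tate twist, together with verifying that the $\tilde R$ of Remark~\ref{remunite} — defined via a \emph{choice} of polynomials $h_1,\dots,h_\mu$ — agrees, degree by degree, with the intrinsic objects $H^{2,0}(S)$ and $H^{1,1}(S)_{\prim}$ for the \emph{local} surface $S=\{g_p=0\}$, in particular that the map $R(g_p)_{2d_p-w_p}\to\bigoplus_q M_q$ used to define $\tilde R$ is the same as the projection $\pi$ appearing in Theorem~\ref{thmAdj}. This is where the hypothesis ``$(Y,p)$ is contact equivalent to a weighted homogeneous hypersurface singularity'' is essential: it guarantees that $g_p$ is genuinely weighted homogeneous so that Section~\ref{secSur} applies to $S$, and (via Remark~\ref{remCont}) that Milnor $=$ Tjurina at the remaining singular points $q$ of $S$, which is exactly what makes the degree-$(d_p-w_p)$ piece contribute no relations and the degree-$(2d_p-w_p)$ piece the full Milnor algebra. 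Once that identification is pinned down the proposition is immediate; the rest is the formal transport of the already-established facts.
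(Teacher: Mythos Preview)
Your approach is correct and is exactly the paper's: the paper's proof is the single sentence ``This is a combination of Lemma~\ref{lemComp}, Proposition~\ref{prpIsol} and Theorem~\ref{thmAdj},'' and your Key Steps paragraph spells out precisely that combination (identify $H^4_p(Y)\cong H^2(S)_{\prim}(-1)$, transport the Hodge filtration through the Tate twist, then invoke Steenbrink and Remark~\ref{remunite}). One small correction to your opening paragraph: the sentence ``the whole point of passing to $\tilde R$ \dots\ is to strip this off, so I must check that the $\tilde R$ construction kills exactly the $(2,0)$ and $(0,2)$ pieces'' misdescribes what $\tilde R$ does---the generators $h_1,\dots,h_\mu$ live in degree $2d_p-w_p$ and cut down $R_{2d_p-w_p}$ to $H^{1,1}(S)_{\prim}$, while $\tilde R_{d_p-w_p}=R_{d_p-w_p}\cong H^{2,0}(S)$ is untouched (and indeed contributes the possibly nonzero $h^{3,1}$ of $H^4_p(Y)$), exactly as you say correctly in your fourth Key Step.
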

\begin{proof}
This is a combination of Lemma~\ref{lemComp}, Proposition~\ref{prpIsol} and
Theorem~\ref{thmAdj}. 
\end{proof}

\begin{proposition} \label{prpTransSurf}
Let $(Y,p)$ be a transversal  $ADE$ surface singularity. Then 
$H^6_p(Y,\QQ)=\QQ(-3)$ and $H^i_p(Y)=0$ for
$i\neq 6$.
\end{proposition}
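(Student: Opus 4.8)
The plan is to reduce to a purely local model. Since $(Y,p)$ is a transversal $ADE$ surface singularity, there is an analytic neighbourhood of $p$ in $Y$ that is isomorphic to a product $(\text{ADE surface singularity}) \times (\text{disc})$; concretely, after a local analytic change of coordinates $Y$ is given near $p$ by an equation $h(t_1,t_2,t_3)=0$ in $\CC^4$ with coordinates $t_1,t_2,t_3,t_4$, where $h$ is one of the $ADE$ normal forms and $t_4$ does not appear. Write $Z\subset\CC^3$ for the $ADE$ surface $\{h=0\}$ and note $(Y,p)\cong (Z,0)\times(\CC,0)$ locally. By excision the local cohomology $H^i_p(Y,\QQ)$ depends only on this local model, so $H^i_p(Y,\QQ)\cong H^i_{(0,0)}\bigl((Z\times\CC),\QQ\bigr)$.

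Next I would compute $H^i_{(0,0)}(Z\times\CC,\QQ)$ by a Künneth/excision argument. First, $ADE$ surface singularities are rational double points, so the minimal resolution $\widetilde{Z}\to Z$ has a tree of rational curves as exceptional fibre and in particular $Z$ is a $\QQ$-homology manifold in a punctured neighbourhood of $0$: the link $L$ of the singularity is a rational homology sphere, so $H^i(L,\QQ)=H^i(S^3,\QQ)$, i.e.\ $\QQ$ for $i=0,3$ and $0$ otherwise. A small neighbourhood $B$ of $0$ in $Z$ is contractible with $B\setminus\{0\}$ homotopy equivalent to $L$, so the long exact sequence of the pair $(B,B\setminus\{0\})$ gives $H^i_0(Z,\QQ)\cong \widetilde{H}^{i-1}(L,\QQ)$, which is $\QQ(-2)$ for $i=4$ (the Tate twist coming from the fact that the link bounds a complex surface, equivalently from the weight filtration on $H^3$ of the link of the cone point, as already used in the proof of Lemma~\ref{lemHN} via a $\CC^*$-bundle computation) and $0$ otherwise. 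Then crossing with the disc $\CC$ shifts local cohomology degree by $2$ (local cohomology along $\{0\}\subset\CC$ is $\QQ(-1)$ in degree $2$), so by the Künneth formula for local cohomology $H^i_{(0,0)}(Z\times\CC,\QQ)\cong H^{i-2}_0(Z,\QQ)(-1)$, which is $\QQ(-3)$ for $i=6$ and $0$ otherwise.

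Alternatively, and perhaps more in the spirit of the preceding section, one can run the same argument as in Proposition~\ref{prpIsol}: the punctured cone over a transversal $ADE$ singularity retracts onto a $\CC^*$-bundle over the link, and one reads off $H^\bullet$ from the Leray spectral sequence — the key input being $H^2(Z_{\text{link-surface}})_{\prim}=0$ because the exceptional curves of the resolution span all of $H^2$ modulo the hyperplane class, i.e.\ $\mu = h^{1,1}(\widetilde Z)_{\prim}$ for $ADE$ singularities, which forces the primitive $(1,1)$-part of the relevant surface to vanish and kills all intermediate terms. Either way one lands on $H^6_p(Y,\QQ)=\QQ(-3)$ and $H^i_p(Y,\QQ)=0$ for $i\neq 6$.

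I expect the only real subtlety to be bookkeeping the Tate twists and checking that the transversal structure genuinely gives a local analytic product (so that excision and Künneth apply); the homological content — that an $ADE$ link is a $\QQ$-homology $3$-sphere — is classical. So the main obstacle is not a deep computation but making the reduction to the product model clean enough that the Künneth shift $H^i_p(Y)\cong H^{i-2}_0(Z)(-1)$ is rigorous.
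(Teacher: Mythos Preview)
Your proposal is correct, and both of your suggested routes work. The paper, however, takes a third path that fits into its own machinery rather than using the product structure directly. For the $A_k$ case (the others being analogous), the paper views the equation $x_1^2+x_2^2+x_3^{k+1}=0$ in four variables as defining a degree $2k+2$ surface $S\subset\PP(k+1,k+1,2,1)$; this $S$ has a single isolated $A_k$ point at $(0{:}0{:}0{:}1)$. Proposition~\ref{prpIsol} then reduces the claim to $H^2(S,\QQ)_{\prim}=0$, which is verified via Lemma~\ref{lemHN}: a Jacobian ring computation for a quasismooth deformation $\tilde S$ gives $h^{1,1}(\tilde S)_{\prim}=k$, and since $\mu(A_k)=k$ the two cancel exactly.

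Your first argument is more elementary and purely topological --- it needs only that $ADE$ links are rational homology $3$-spheres plus a K\"unneth shift, and avoids any ring calculation. The paper's argument, by contrast, makes the transversal $ADE$ case a degenerate instance of its general framework, displaying the numerical coincidence $\mu=h^{1,1}(\tilde S)_{\prim}$ as the reason these points contribute nothing to $H^4$; this is conceptually satisfying inside the paper but costs a small computation for each $ADE$ type. One point worth a sentence in your write-up: the compatibility of the K\"unneth isomorphism $H^i_{(0,0)}(Z\times\CC)\cong H^{i-2}_0(Z)(-1)$ with mixed Hodge structures (so that the twist really is $\QQ(-3)$), e.g.\ by running it through the long exact sequences of pairs or the $\CC^*$-bundle computation you already cite from the proof of Lemma~\ref{lemHN}.
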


\begin{proof}
  For simplicity we assume that $(Y,p)$ is an $A_k$-singularity. Using
Lemma~\ref{lemComp}  it suffices to 
prove the statement for $(Y,p)$ given by 
\[ x_1^2+x_2^2+x_3^{k+1} =0 .\]
This equation defines a surface $S \subset 
\PP(k+1,k+1,2,1)$ of degree $2k+2$ with an isolated $A_k$
singularity in $(0:0:0:1)$.

{}From Lemma~\ref{lemComp} and Proposition~\ref{prpIsol} it follows that it
suffices to prove that
$H^2(S,\QQ)_{\prim}=0$.
We start by calculating $h^2(\tilde{S})$ for a quasismooth surface $\tilde{S}$
of the same degree, e.g.,
$\tilde{g}:=x_1^2+x_2^2+x_3^{k+1}+x_4^{2k+2}=0$. This can be done by calculating
the dimension of several graded pieces of the Jacobian ring of $\tilde{Y}$.
The sum of the weights equals $2k+5$, hence we are interested in
$h^{2,0}(\tilde{S})=\dim R(\tilde{g})_{-3}=0$,
$h^{0,2}(\tilde{S})=R(\tilde{g})_{4k+1}= 0$ and 
\[ h^{1,1}(\tilde{S})=\dim R(\tilde{g})_{2k-1}= \dim \spa \{ [x_3^ix_4^j] \colon
2i+j=2k-1\}=k.\]
Hence $h^2(\tilde{S})_{\prim}=k$.
Since
$\mu(Y,p)=k$, we get $h^2(S)_{\prim}=h^2(\tilde{S})_{\prim}-\mu(Y,p)=0$. This
finishes the $A_k$ case.

For $D_m,E_n$ singularities one can proceed similarly.
\end{proof}

\section{Glueing local information}\label{secGlue}
Let $\PP$ be a four dimensional weighted projective space and let $Y\subset \PP$
be a hypersurface, given by $f=0$. Let $\Sigma$ be the locus where all the
partials of $f$ vanish. We assume the usual conditions, i.e., 
$\Sigma\cap \PP_{\sing}=\emptyset$,
$\dim \Sigma \leq 1$ and that 
at a general point of any one dimensional component of $\Sigma$ we have a
transversal  $ADE$ surface singularity.
Finally, let $\cP\subset \Sigma$ be the set of points $p\in \Sigma$ such that
$(Y,p)$ is not a
transversal  $ADE$ surface singularity.

We want to use the previous section to relate $H^4(Y,\QQ)_{\prim}$ to the 
cokernel of $H^4(U,\QQ)(1) \to \oplus_{p\in \cP} H^4_p(Y,\QQ)$. In this section
all considerations are topological. For this reason we work with $\QQ$
coefficients and use $H^i(\cdot)$ as   shorthand for $H^i(\cdot,\QQ)$.

For each point $p\in \cP$, fix a small contractible neighborhood $U_p\subset
\Sigma$.
Let $\Sigma_1:=\Sigma \setminus \cup_{p\in \cP} U_p$ be the complement of the
$U_p$. Note that $\Sigma_1$ is a closed Riemann surface with boundary embedded
in $\PP$.

\begin{lemma} \label{lemGpt}
We have that \[ H^4_{\Sigma_1}(Y)\cong H^2(\Sigma_1)^*(-3), \;
H^5_{\Sigma_1}(Y)\cong H^1(\Sigma_1)^*(-3)\] and  
\[ H^6_{\Sigma_1}(Y)\cong  H^0(\Sigma_1)^*(-3). \]
\end{lemma}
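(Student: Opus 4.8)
The statement is local along $\Sigma_1$, which is a compact real $4$-manifold-with-boundary (a tubular neighbourhood) sitting over the Riemann surface $\Sigma_1$; at a general point of $\Sigma_1$ the threefold $Y$ has a transversal $ADE$ surface singularity, and at \emph{every} point of $\Sigma_1$ (since we removed small neighbourhoods of the bad points $\cP$) the singularity is of transversal $ADE$ type. The plan is to compute $H^{\bullet}_{\Sigma_1}(Y)$ by means of a Leray (or rather a specialization/fibration) spectral sequence for the projection of a tubular neighbourhood of $\Sigma_1$ onto $\Sigma_1$, using Proposition~\ref{prpTransSurf} to identify the stalks of the relevant sheaf.

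First I would set up the local system. Let $N$ be a closed tubular neighbourhood of $\Sigma_1$ in $Y$ and $N^* = N \setminus \Sigma_1$. By excision $H^i_{\Sigma_1}(Y) \cong H^i_{\Sigma_1}(N)$, and since $N$ retracts to $\Sigma_1$ the long exact sequence of the pair $(N, N^*)$ gives $H^i_{\Sigma_1}(N) \cong \tilde H^{i-1}(N^*)$ in the range relevant to us (this is the relative-to-local argument already used in Lemma~\ref{lemCone}). Now $N \to \Sigma_1$ is, fibrewise, a cone over the link of a transversal $ADE$ surface singularity, so $N^* \to \Sigma_1$ is a fibration whose fibre is $X^*_q$, the punctured affine cone over an $ADE$ surface singularity germ at the point $q\in\Sigma_1$. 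By Proposition~\ref{prpTransSurf} together with Lemma~\ref{lemCone} (equivalently the computation inside the proof of Proposition~\ref{prpIsol} specialized to the $ADE$ case, where $H^2(S,\QQ)_{\prim}=0$), the cohomology of this fibre is $H^0 = \QQ$, $H^i = 0$ for $1 \le i \le 4$, and $H^5 = \QQ(-3)$. So $R^j p_* \QQ$ is the constant sheaf $\QQ$ for $j=0$, the constant sheaf $\QQ(-3)$ for $j=5$, and zero otherwise — the key point being that there is no monodromy, because all fibres carry the same trivial $ADE$-type cohomology and the weight-$3$ class in degree $5$ is canonically a multiple of the fundamental class of the link (it is the image of the hyperplane/Gysin class), hence monodromy-invariant.

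With the spectral sequence $E_2^{i,j} = H^i(\Sigma_1, R^j p_*\QQ) \Rightarrow H^{i+j}(N^*)$ in hand, only two rows survive: the $j=0$ row contributes $H^i(\Sigma_1,\QQ)$ and the $j=5$ row contributes $H^{i}(\Sigma_1,\QQ)(-3)$. Since $\Sigma_1$ is a compact oriented surface with nonempty boundary it is homotopy equivalent to a wedge of circles, so $H^i(\Sigma_1,\QQ)=0$ for $i \ge 2$, and the spectral sequence degenerates for degree reasons (no room for differentials between rows $0$ and $5$ landing in the relevant total degrees). Reading off $H^{k}(N^*)$ for $k = 5, 6$ we get $H^5(N^*) \cong H^0(\Sigma_1)(-3)$, $H^6(N^*) \cong H^1(\Sigma_1)(-3)$, and $H^7(N^*) \cong H^2(\Sigma_1)(-3) = 0$; translating back via $H^i_{\Sigma_1}(Y) \cong H^{i-1}(N^*)$ gives $H^6_{\Sigma_1}(Y) \cong H^0(\Sigma_1)(-3)$ and $H^7_{\Sigma_1}(Y) \cong H^1(\Sigma_1)(-3)$ — but the stated answer is phrased with \emph{duals}, so the final step is to observe that since $\Sigma_1$ is a closed orientable surface-with-boundary, $H^j(\Sigma_1)^* \cong H^j(\Sigma_1)$ for $j = 0,1,2$ (both sides are finite-dimensional $\QQ$-vector spaces of the same dimension, or one invokes Lefschetz duality $H^j(\Sigma_1) \cong H^{2-j}_c(\Sigma_1)^*$ to match indices exactly as in Lemma~\ref{lemTopB}); this reconciles the indexing and yields $H^4_{\Sigma_1}(Y) \cong H^2(\Sigma_1)^*(-3)$, $H^5_{\Sigma_1}(Y) \cong H^1(\Sigma_1)^*(-3)$, $H^6_{\Sigma_1}(Y) \cong H^0(\Sigma_1)^*(-3)$.

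**Main obstacle.** The genuinely delicate point is justifying that $R^j p_*\QQ$ is a \emph{constant} sheaf, i.e. that the monodromy of the fibration $N^* \to \Sigma_1$ acts trivially on $H^5$ of the fibre. One must argue that the generator of $H^5(X^*_q,\QQ)(-3)$ is naturally a topological class — it is pulled back from $H^2(\PP,\QQ)$ or is the Thom/Gysin image of the fundamental class of $\Sigma_1$ — so it cannot be twisted by the base. A secondary technical subtlety, which I would want to handle carefully, is that the "fibre" is not quite locally trivial in the topological sense near points where the transversal $ADE$ type jumps; but those points are confined to a finite subset of $\Sigma_1$ (and in fact we have already excised $\cP$), and along $\Sigma_1$ the transversal type is constant at the generic point by hypothesis, so one either stratifies and checks that the jumps do not affect $H^5$ (they only change $H^2(S)_{\prim}$, which is $0$ for all $ADE$ types), or one passes to the complement of finitely many fibres and checks the restriction maps are isomorphisms. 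Getting this local-constancy argument clean is where most of the real work lies; the spectral-sequence bookkeeping afterwards is routine.
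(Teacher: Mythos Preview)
Your spectral-sequence approach is natural, but there is a genuine error in the identification of the fibre. The fibre of $N^*\to\Sigma_1$ over an interior point $q$ is the punctured \emph{transversal} slice, i.e.\ the link of the $ADE$ surface singularity in $\CC^3$; this is a rational homology $3$-sphere with $H^0=\QQ$ and $H^3=\QQ(-2)$, not the $5$-dimensional object with $H^5=\QQ(-3)$ that you use. You have imported the cohomology of $X^*$ from Proposition~\ref{prpTransSurf}, but that $X^*$ is the punctured \emph{threefold} germ (the affine cone in $\CC^4$ over the auxiliary surface $S$), not the fibre of the retraction onto $\Sigma_1$. This is why your output lands in $H^7_{\Sigma_1}(Y)$, which must vanish since $\dim_{\RR}Y=6$; the hand-wave about $H^j(\Sigma_1)\cong H^j(\Sigma_1)^*$ cannot repair a degree shift of two.

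With the correct fibre the naive Leray computation still gives the wrong answer, because you have not accounted for the boundary of $\Sigma_1$. The stalk of $\cH^4:=R^4\Gamma_{\Sigma_1}\QQ_Y$ is $\QQ(-2)$ at interior points but \emph{zero} at boundary points (removing a closed half-disc from the local model $Z\times D$ leaves a contractible complement). Hence $\cH^4$ is the extension by zero from $\mathrm{int}(\Sigma_1)$, and the spectral sequence reads $H^p_c(\mathrm{int}\,\Sigma_1)(-2)\Rightarrow H^{p+4}_{\Sigma_1}(Y)$; Poincar\'e duality on the open surface then gives $H^{p+4}_{\Sigma_1}(Y)\cong H^{2-p}(\Sigma_1)^*(-3)$ as stated. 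The paper sidesteps both issues by an induction on a good cover of $\Sigma_1$ by discs, using Mayer--Vietoris for local cohomology together with the pointwise input $H^6_p(Y)=\QQ(-3)$ from Proposition~\ref{prpTransSurf} as the base case; this packages the boundary behaviour automatically.
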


\begin{proof} 
Take a finite open covering  $\cU:=\{V_i\}$ of $\Sigma_1$ such that each $V_i$
is homeomorphic to a disc with boundary $S^1$, in particular each $V_i$ is
contractible. 
Let $D_i=\overline{V_i}$ be the closure in the complex topology. 
It is easy to show that we can find such a covering with the property
that each intersection $D_{i_1} \cap D_{i_1}\cap \dots\cap D_{i_k}$ is
empty or contractible.

We now proceed by induction. If $\# \cU=1$, then $\Sigma_1$ is contractible.
Hence
$H^0(\Sigma_1)=\QQ$ and all other cohomology groups of $\Sigma_1$ vanish. 
In this case we have a deformation retract $(Y,Y\setminus \Sigma_1)$ to $(Y',Y'
\setminus
\{ p \})$ where $(Y',p)$ is a transversal  $ADE$ surface singularity. {}From
this
it follows that $H^i_{\Sigma_1}(Y)\cong H^i_p(Y')$. {}From
Proposition~\ref{prpTransSurf} it follows that $H^6_p(Y')=\QQ(-3)$ and all other
local cohomology groups vanish. Hence the statement is true in this case.

Assume now $\# \cU=k$, let $\Sigma_0=\cup_{1\leq i \leq k-1} D_i$.
We have two Mayer-Vietoris sequences (one is dual to the usual Mayer-Vietoris 
sequence, the other is Mayer-Vietoris for cohomology with support), namely 
\begin{SMALL}
\[ \xymatrix{  H^{i}(D_k \cap \Sigma_0)^* \ar[r]\ar[d]^{\sim} &
H^{i}(D_k)^*\oplus H^{i}(\Sigma_0)^* \ar[r]\ar[d]^{\sim} &
H^{i}(\Sigma_1)^* \ar[r]\ar[d] &  H^{i-1}(D_k \cap \Sigma_0)^*
\ar[d]^{\sim} \\
 H^{6-i}_{D_k \cap \Sigma_0}(Y)(3) \ar[r] & H^{6-i}_{D_k}(Y)(3) \oplus
H^{6-i}_{\Sigma_0}(Y)(3) \ar[r]& H^{6-i}_{\Sigma_1}(Y)(3)\ar[r] 
&H^{6-(i-1)}_{D_k \cap
\Sigma_0}(Y)(3) } \]
\end{SMALL}
 The first two vertical maps are isomorphisms by the induction hypothesis.
{}From
the five-lemma it follows 
that $\dim H^{i}(\Sigma)= \dim H^{6-i}_\Sigma(Y)$, which
yields the lemma.
\end{proof}

\begin{lemma} \label{lemLocH0} We have that
\[ H^6_{\Sigma}(Y)\cong H^0(\Sigma)^*(-3) \mbox{ and } H^5_{\Sigma}(Y) \cong
H^1(\Sigma)^*(-3). \]
\end{lemma}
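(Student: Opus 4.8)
I want to glue the local cohomology groups $H^{\bullet}_{\cP}(Y)$ and $H^{\bullet}_{\Sigma_1}(Y)$ into $H^{\bullet}_{\Sigma}(Y)$ via a Mayer--Vietoris sequence for cohomology with supports, and then read off the top two groups. Write $\Sigma = \Sigma_1 \cup (\cup_{p\in\cP} U_p)$, where the $U_p$ are the small contractible neighborhoods chosen above and $\Sigma_1$ meets each $U_p$ in an annulus (homotopy equivalent to $S^1$). Shrinking slightly, set $W = \cup_{p\in\cP} U_p$; then $\Sigma_1 \cap W$ is a disjoint union of $\#\cP$ annuli, so $H^0(\Sigma_1\cap W) = \QQ^{\#\cP}$, $H^1(\Sigma_1\cap W)=\QQ^{\#\cP}$, and $H^i=0$ for $i\geq 2$. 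By excision and the deformation retraction $U_p \rightsquigarrow \{p\}$ we have $H^i_{U_p}(Y)\cong H^i_p(Y)$, and by Lemma~\ref{lemComp}, Proposition~\ref{prpIsol}, and Proposition~\ref{propLocCoh} the group $H^i_p(Y)$ is concentrated in degrees $3,4,6$ with $H^5_p(Y)=0$ and $H^6_p(Y)=\QQ(-3)$; similarly $H^i_{\Sigma_1\cap W}(Y)$ is concentrated in degree $6$ (same argument as the base case of Lemma~\ref{lemGpt}, since $\Sigma_1\cap W$ retracts onto finitely many points each carrying a transversal $ADE$ singularity), with $H^6_{\Sigma_1\cap W}(Y)\cong H^0(\Sigma_1\cap W)^*(-3)=\QQ(-3)^{\#\cP}$.

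First I would write down the Mayer--Vietoris sequence for supports
\[ \cdots \to H^i_{\Sigma_1\cap W}(Y) \to H^i_{\Sigma_1}(Y)\oplus H^i_W(Y) \to H^i_\Sigma(Y) \to H^{i+1}_{\Sigma_1\cap W}(Y) \to \cdots \]
(this is standard; it follows from the corresponding Mayer--Vietoris sequence for the pairs $(Y, Y\setminus\Sigma_1)$, $(Y,Y\setminus W)$, $(Y,Y\setminus\Sigma)$, $(Y,Y\setminus(\Sigma_1\cap W))$, compatibly with the MHS by \cite[Proposition 5.47]{PS}). For $i=6$ this reads
\[ H^6_{\Sigma_1\cap W}(Y) \to H^6_{\Sigma_1}(Y)\oplus H^6_W(Y) \to H^6_\Sigma(Y) \to H^7_{\Sigma_1\cap W}(Y)=0. \]
Using Lemma~\ref{lemGpt} for the first factor ($H^6_{\Sigma_1}(Y)\cong H^0(\Sigma_1)^*(-3)$) and the computation above for $W$, and matching the first map with the dual of the restriction $H^0(\Sigma_1)\oplus H^0(W)\to H^0(\Sigma_1\cap W)$ under the duality isomorphisms of Lemma~\ref{lemGpt} and Proposition~\ref{prpTransSurf}, the sequence becomes the dual of the reduced Mayer--Vietoris sequence computing $H^0(\Sigma)$; hence $H^6_\Sigma(Y)\cong H^0(\Sigma)^*(-3)$.

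For $i=5$ I would run the same sequence one degree down:
\[ H^5_{\Sigma_1\cap W}(Y)=0 \to H^5_{\Sigma_1}(Y)\oplus H^5_W(Y) \to H^5_\Sigma(Y) \to H^6_{\Sigma_1\cap W}(Y) \to H^6_{\Sigma_1}(Y)\oplus H^6_W(Y). \]
Here $H^5_W(Y)=\oplus_{p\in\cP} H^5_p(Y)=0$ by Proposition~\ref{propLocCoh}, and $H^5_{\Sigma_1}(Y)\cong H^1(\Sigma_1)^*(-3)$ by Lemma~\ref{lemGpt}, so the left term is $H^1(\Sigma_1)^*(-3)$. The last map is again, under the duality identifications, dual to $H^0(\Sigma_1)\oplus H^0(W)\to H^0(\Sigma_1\cap W)$, whose kernel on the dual side is exactly the cokernel of that map dualized, and one checks that the connecting piece $H^5_\Sigma(Y)\to H^6_{\Sigma_1\cap W}(Y)$ together with the MV sequence for $\Sigma$ itself assembles into the dual of the Mayer--Vietoris sequence $H^0(\Sigma_1)\oplus H^0(W)\to H^0(\Sigma_1\cap W)\to H^1(\Sigma)\to H^1(\Sigma_1)\oplus H^1(W)$. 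Since $H^1(W)=0$, this gives $H^5_\Sigma(Y)\cong H^1(\Sigma)^*(-3)$.

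\textbf{Main obstacle.} The routine parts — that local cohomology is excisive and invariant under the retractions, and the degree bookkeeping — are easy. The real care goes into checking that the boundary/connecting maps in the Mayer--Vietoris sequence for supports match, under the Poincaré--Lefschetz-type duality isomorphisms supplied by Lemma~\ref{lemGpt} and Proposition~\ref{prpTransSurf}, the restriction maps in ordinary cohomology of $\Sigma$, $\Sigma_1$, $W$, $\Sigma_1\cap W$; i.e. that the whole ladder is dual (up to the Tate twist $(-3)$) to the Mayer--Vietoris sequence computing $H^{\leq 1}(\Sigma)$. The cleanest way to secure this is to note that $Y$ near $\Sigma$ is, transversally to the one-dimensional stratum, a fixed contractible germ (a transversal $ADE$ surface singularity) with $H^6$ of its local cohomology equal to $\QQ(-3)$ and lower groups vanishing off $\cP$; then $R\Gamma_\Sigma(\underline{\QQ}_Y)$ is, up to a shift and twist, the dualizing-type complex of $\Sigma$ itself, and the required compatibility is the functoriality of that identification with respect to open covers of $\Sigma$. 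Spelling this out rigorously, rather than by the hands-on five-lemma induction of Lemma~\ref{lemGpt}, is the one place where I would need to be genuinely careful.
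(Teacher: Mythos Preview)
Your strategy is exactly the paper's: a Mayer--Vietoris sequence for cohomology with supports, for the decomposition $\Sigma=\Sigma_1\cup\Sigma_2$ with $\Sigma_2=\bigcup_p \overline{U_p}$, compared via a commutative ladder (and the five-lemma) with the ordinary Mayer--Vietoris sequence for $H^{\leq 1}(\Sigma)$. So the approach is correct and essentially identical to the paper's.

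There are, however, two factual slips you should fix. First, the overlap $D:=\Sigma_1\cap\Sigma_2$ is a disjoint union of circles, but not $\#\cP$ of them: at a point $p\in\cP$ where $(Y,p)$ is non-isolated, $(\Sigma,p)$ is locally a union of $m_p$ lines through $p$, so $D$ contributes $m_p$ circles near $p$ (and $0$ circles near an isolated $p$). This only affects your explicit dimension counts, not the logic. Second, and more seriously, your claim that ``$H^i_{\Sigma_1\cap W}(Y)$ is concentrated in degree $6$'' is false: the overlap is homotopy equivalent to a union of circles, not to points, and the very argument of Lemma~\ref{lemGpt} you invoke gives $H^5_D(Y)\cong H^1(D)^*(-3)\neq 0$. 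So the line ``$H^5_{\Sigma_1\cap W}(Y)=0$'' at the start of your $i=5$ analysis is incorrect. The paper handles this by writing down the full ladder, including the nonzero $H^5_D(Y)\cong H^1(D)^*(-3)$ term, and then applying the five-lemma; since you end by appealing to the dual Mayer--Vietoris sequence for $\Sigma$ anyway, you can repair your argument simply by dropping the erroneous vanishing claim and carrying that term through the ladder.
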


\begin{proof} 
Let $D_p=\overline{U_p}$. Using that  $D_p$ is contractible we have that
$H^i_{D_p}(Y)\cong H^i_p(Y)$. {}From Proposition~\ref{prpIsol} it follows that
$H^6_p(Y)=\QQ(-3)$ and also that $H^5_p(Y)=0$. 

Let $\Sigma_2=\cup \overline{U_p}$. Since $\overline{U_p}$ is contractible we
have that $H^1(\Sigma_2)=0$ and $H^5_{\Sigma_2}(Y)=\oplus H^5_p(Y)=0$. In a
similar way we get $H^6_{\Sigma_2}(Y)=\QQ(-3)^{\#\cP}=H^0(\Sigma_2)^*(-3)$.

Along $D:=\Sigma_1\cap \Sigma_2$, which is union of circles, we have transversal
ADE surface singularities. A reasoning as in Lemma~\ref{lemGpt} shows that
$H^5_D(Y) \cong H^1(Y)^*$ and $H^6_D(Y)\cong H^0(D)^*$.

As in the previous lemma we can consider the two Mayer-Vietories sequences (the
vertical arrows are isomorphisms by either the above discussion or using
Lemma~\ref{lemGpt})
\begin{small}
\[ \xymatrix{  H^{1}(D)^* \ar[r]\ar[d]^{\sim} &
H^{1}(\Sigma_1)^*\oplus H^{1}(\Sigma_2)^* \ar[r]\ar[d]^{\sim} &
H^{1}(\Sigma)^*\ar[r]\ar[d] &  \dots
\\
 H^{5}_{D}(Y)(3) \ar[r] & H^{5}_{\Sigma_1}(Y)(3) \oplus
H^{5}_{\Sigma_2}(Y)(3) \ar[r]& H^{5}_{\Sigma}(Y)(3)\ar[r]  &\dots  } \]
\[ \xymatrix{  \dots \ar[r] &  H^{0}(D)^*
\ar[d]^{\sim}\ar[r] & H^{0}(\Sigma_1)^*\oplus H^{0}(\Sigma_2)^*
\ar[r]\ar[d]^{\sim} &
H^{0}(\Sigma)^* \ar[r]\ar[d] & 0
\\
\dots  \ar[r]&H^6_D(Y)(3)\ar[r] & H^{6}_{\Sigma_1}(Y)(3) \oplus
H^{6}_{\Sigma_2}(Y)(3) \ar[r]& H^{6}_{\Sigma}(Y)(3)\ar[r]  &0 } \]
 \end{small}
  An application of the five-lemma yields the proof.
\end{proof}

\begin{lemma}\label{lemH5van} Suppose $\dim \Sigma=1$. Then $H^5(Y)=0$ and 
$H^4(Y^*)\to
H^5_\Sigma(Y)$ is an isomorphism.
\end{lemma}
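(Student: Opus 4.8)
The plan is to read off the statement from the long exact cohomology sequence (\ref{eqnPairSigma}) of the pair $(Y,Y^*)$, feeding in the descriptions of $H^\bullet(Y^*)$ and $H^\bullet_\Sigma(Y)$ already obtained; the one genuinely new input will be the vanishing of $H^5$ of a resolution of $Y$.

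First I would observe that $H^5(Y^*,\QQ)\cong H^0(\Sigma,\QQ)_{\prim}^*(-3)$ by Proposition~\ref{prpPRsing}(3) is a \emph{pure} Hodge structure of weight $6$, while $H^5(Y,\QQ)$ has weights $\le 5$ because $Y$ is projective. Hence the restriction map $H^5(Y,\QQ)\to H^5(Y^*,\QQ)$, a morphism of mixed Hodge structures, is zero, and so by (\ref{eqnPairSigma}) the map $H^5_\Sigma(Y,\QQ)\to H^5(Y,\QQ)$ is surjective. By Lemma~\ref{lemLocH0} we have $H^5_\Sigma(Y,\QQ)\cong H^1(\Sigma,\QQ)^*(-3)$, and since $\Sigma$ is a projective curve $H^1(\Sigma,\QQ)$ has weights $\le 1$, so $H^5_\Sigma(Y,\QQ)$ has weights $\ge 5$; therefore its quotient $H^5(Y,\QQ)$ has weights $\ge 5$, and combining the two bounds $H^5(Y,\QQ)$ is pure of weight $5$.

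The main step is then to show $H^5(Y,\QQ)=0$. I would pick a projective resolution $\sigma\colon\widetilde Y\to Y$, isomorphic over the smooth locus; since $H^5(Y,\QQ)$ is pure of weight $5$, the pullback $H^5(Y,\QQ)\to H^5(\widetilde Y,\QQ)$ is injective (the standard behaviour of the top-weight piece under a proper modification of a complete variety, cf.\ \cite{PS}), so it is enough to prove $H^5(\widetilde Y,\QQ)=0$. By Poincar\'e duality on the smooth projective threefold $\widetilde Y$ this is equivalent to $H^1(\widetilde Y,\QQ)=0$, i.e.\ to $h^1(\widetilde Y,\mathcal{O}_{\widetilde Y})=0$. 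Now $Y$ is normal (a hypersurface is Cohen--Macaulay, and $Y$ is smooth in codimension $1$ by admissibility), so $\sigma_*\mathcal{O}_{\widetilde Y}=\mathcal{O}_Y$; the Koszul sequence of $Y\subset\PP$ gives $H^1(Y,\mathcal{O}_Y)\hookrightarrow H^2(\PP,\mathcal{O}_\PP(-d))=0$; and $R^1\sigma_*\mathcal{O}_{\widetilde Y}=0$, since off $\Sigma$ the threefold $Y$ has at worst quotient singularities (along $Y\cap\PP_{\sing}$), which are rational, along the one-dimensional part of $\Sigma$ it has transversal $ADE$, hence compound Du Val, hence rational singularities, and at each $p\in\cP$ the singularity is analytically a cone over a hypersurface $S_p$ in a weighted $\PP^3$, for which $R^1\sigma_*\mathcal{O}$ is assembled from the groups $H^1(S_p,\mathcal{O}_{S_p}(j))$, $j\ge 0$, all of which vanish by the Koszul sequence of $S_p$ in its ambient weighted $\PP^3$. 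The Leray spectral sequence for $\sigma$ then gives $h^1(\widetilde Y,\mathcal{O}_{\widetilde Y})=h^1(Y,\mathcal{O}_Y)=0$, so $H^5(\widetilde Y,\QQ)=0$ and hence $H^5(Y,\QQ)=0$.

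Finally, with $H^5(Y,\QQ)=0$, exactness of (\ref{eqnPairSigma}) in degree $4$ shows that $H^4(Y^*,\QQ)\to H^5_\Sigma(Y,\QQ)$ is surjective; by Proposition~\ref{prpPRsing}(3) and Lemma~\ref{lemLocH0} both groups are isomorphic to $H^1(\Sigma,\QQ)^*(-3)$, hence of the same finite dimension, so the surjection is an isomorphism. The step I expect to cause the most trouble is the vanishing $h^1(\widetilde Y,\mathcal{O}_{\widetilde Y})=0$ --- specifically the verification that $R^1\sigma_*\mathcal{O}_{\widetilde Y}$ vanishes at the points of $\cP$, where $Y$ need not have rational singularities and one must really exploit the cone structure coming from admissibility.
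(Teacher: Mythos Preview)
Your argument is correct, but it takes a substantially longer detour than the paper's proof. The paper never leaves the long exact sequence~(\ref{eqnPairSigma}) and never touches a resolution. The key observation you missed is that the \emph{same} weight trick you used in degree~$5$ also works in degree~$4$: by Proposition~\ref{prpPRsing}(3) one has $H^4(Y^*,\QQ)\cong H^1(\Sigma,\QQ)^*(-3)$, which has Hodge weights $\ge 5$, while $H^4(Y,\QQ)$ has weights $\le 4$; hence $H^4(Y,\QQ)\to H^4(Y^*,\QQ)$ is the zero map. Exactness then gives that $H^4(Y^*,\QQ)\to H^5_\Sigma(Y,\QQ)$ is injective, and since both sides are isomorphic to $H^1(\Sigma,\QQ)^*(-3)$ it is an isomorphism. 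Consequently $H^5_\Sigma(Y,\QQ)\to H^5(Y,\QQ)$ is zero, so $H^5(Y,\QQ)\hookrightarrow H^5(Y^*,\QQ)$. On the other side the paper checks, by a dimension count using $H^6_\Sigma(Y,\QQ)\cong H^0(\Sigma,\QQ)^*(-3)$ and $H^6(Y,\QQ)=\QQ(-3)$, that $H^5(Y^*,\QQ)\to H^6_\Sigma(Y,\QQ)$ is injective. Since the composite $H^5(Y,\QQ)\to H^5(Y^*,\QQ)\to H^6_\Sigma(Y,\QQ)$ vanishes by exactness, one concludes $H^5(Y,\QQ)=0$ immediately.

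So the paper's proof is a two-line Hodge-weight squeeze, whereas you prove $H^5(Y,\QQ)=0$ by passing to a resolution $\widetilde Y$, invoking Poincar\'e duality and Hodge symmetry to reduce to $h^1(\widetilde Y,\mathcal O_{\widetilde Y})=0$, and then running a Leray/formal-functions argument to control $R^1\sigma_*\mathcal O_{\widetilde Y}$ at the cone points of $\cP$. Your route does work (contact equivalence gives an analytic isomorphism, so the local $R^1\sigma_*\mathcal O$ is computed on the weighted cone; the weighted blowup has exceptional divisor $S_p$ with conormal $\mathcal O_{S_p}(1)$, and the hypersurface sequence in weighted $\PP^3$ kills $H^1(S_p,\mathcal O_{S_p}(j))$ for $j\ge 0$; the residual singularities of the blowup are transversal $ADE$ or quotient, hence rational). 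But all of this machinery is avoidable: once you have the weight bound on $H^4(Y^*)$, the isomorphism $H^4(Y^*)\cong H^5_\Sigma(Y)$ comes first and $H^5(Y)=0$ is a corollary, rather than the other way around.
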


\begin{proof}
Consider the exact sequence of the pair $(Y,Y^*)$
\begin{eqnarray*} H^4(Y) &\to& H^4(Y^*) \to H^5_{\Sigma}(Y) \to H^5(Y)\\ & \to&
H^5(Y^*) \to H^6_{\Sigma}(Y) \to H^6(Y) \to H^6(Y^*)=0.\end{eqnarray*} 

Note that it follows from Proposition~\ref{prpPRsing} that 
$H^5(Y^*)=H^0(\Sigma)_{\prim}^*(-3)$. Using Lemma~\ref{lemLocH0} it follows that
$h^5(Y^*)=  h^6_{\Sigma}(Y) -h^6(Y)$, hence  the map 
$H^5(Y^*)\to H^6_{\Sigma}(Y)$ is injective.

{}From Proposition~\ref{prpPRsing} it follows that $H^4(Y^*)$ is isomorphic to
$ 
H^1(\Sigma)^*(-3)$. {}From Lemma~\ref{lemLocH0} it follows that
$H^5_{\Sigma}(Y)$
is isomorphic to $H^1(\Sigma)^*(-3)$. Hence $H^4(Y^*)$ and $H^5_{\Sigma}(Y)$
have the same dimension.

Note that the possible Hodge weights of $H^4(Y^*)\cong H^1(\Sigma)^*(-3)$ are
$5$ and $6$, where $H^4(Y)$ has Hodge weights at most 4 \cite[Theorem 5.39]{PS}.
Hence $H^4(Y) \to H^4(Y^*)$ is the zero-map, $H^4(Y^*)\cong H^5_{\Sigma}(Y)$ and
$H^5(Y)=0$.
\end{proof}

\begin{theorem} \label{thmCoKer}
 We have that 
\[H^4(Y)_{\prim} = \coker (H^4(U)(1) \to \oplus_{p\in \cP} H^4_{p}(Y)).\]
\end{theorem}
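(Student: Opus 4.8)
**The plan is to compare the cohomology of $Y$ with the cohomology of $Y^*$ via the long exact sequence of the pair $(Y,Y^*)$, and then to separate the local cohomology $H^4_\Sigma(Y)$ into a piece coming from the points $\cP$ and a piece coming from the $1$-dimensional part of $\Sigma$, showing the latter does not contribute.**

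First I would write down the relevant portion of the long exact sequence (\ref{eqnPairSigma}) for $n=3$:
\[ H^3(Y^*) \to H^4_\Sigma(Y) \to H^4(Y) \to H^4(Y^*) \to H^5_\Sigma(Y). \]
By Lemma~\ref{lemH5van} (using $\dim\Sigma=1$; the case $\dim\Sigma=0$ is analogous and easier) the map $H^4(Y)\to H^4(Y^*)$ is the zero map, so this already gives a short exact sequence $H^3(Y^*) \to H^4_\Sigma(Y) \to H^4(Y) \to 0$, i.e. $H^4(Y) = \coker(H^3(Y^*)\to H^4_\Sigma(Y))$. Passing to primitive cohomology and using Proposition~\ref{prpPRsing}(3), which identifies $H^3(Y^*)$ as an extension of $H^2(\Sigma)^*_{\prim}(-3)$ by $H^4(U)(1)$, I would argue that the $H^2(\Sigma)^*_{\prim}(-3)$ summand maps to the part of $H^4_\Sigma(Y)$ of Hodge weight $>4$, which must die in $H^4(Y)_{\prim}$ anyway (it has weight $\leq 4$), so effectively $H^4(Y)_{\prim} = \coker(H^4(U)(1)\to H^4_\Sigma(Y))$.

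Next I would split $\Sigma = \Sigma_1 \cup \bigcup_{p\in\cP} U_p$ as in Section~\ref{secGlue} and use a Mayer--Vietoris argument (exactly as in Lemmas~\ref{lemGpt} and~\ref{lemLocH0}) for local cohomology: along $\Sigma_1$ and along the overlap circles one has only transversal $ADE$ surface singularities, so by Proposition~\ref{prpTransSurf} the groups $H^4_{\Sigma_1}(Y)$ and $H^4_D(Y)$ vanish (transversal $ADE$ points only contribute in degree $6$). Hence the Mayer--Vietoris sequence gives $H^4_\Sigma(Y) \cong \bigoplus_{p\in\cP} H^4_{D_p}(Y) \cong \bigoplus_{p\in\cP} H^4_p(Y)$, since each $D_p=\overline{U_p}$ is contractible. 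Substituting this isomorphism into the cokernel description yields the theorem.

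**The main obstacle** I expect is the weight/Hodge-theoretic bookkeeping in the first step: one must be careful that the map $H^3(Y^*)\to H^4_\Sigma(Y)$ really does restrict to a map on the $H^4(U)(1)$ summands whose cokernel computes all of $H^4(Y)_{\prim}$, i.e. that no part of the $H^2(\Sigma)^*_{\prim}(-3)$ contribution survives in $H^4(Y)_{\prim}$ and that the extension in Proposition~\ref{prpPRsing}(3) is compatible with the connecting map. This requires invoking that $H^4(Y)$ has Hodge weights $\leq 4$ (by \cite[Theorem 5.39]{PS}) while $H^2(\Sigma)^*_{\prim}(-3)$ is pure of weight $6$, so the relevant maps are forced to be zero for weight reasons; once this is in place the rest is a clean diagram chase combined with the already-established local computations.
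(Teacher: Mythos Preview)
Your approach has the right skeleton but contains a concrete error in Step~4 that propagates backward and invalidates Step~3 as well.

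The claim that $H^4_{\Sigma_1}(Y)=0$ is false. Lemma~\ref{lemGpt}, which you cite, says $H^4_{\Sigma_1}(Y)\cong H^2(\Sigma_1)^*(-3)$, and this is nonzero whenever $\Sigma_1$ has a closed component (i.e.\ whenever some irreducible component of $\Sigma$ carries only transversal $ADE$ singularities and hence contains no point of $\cP$). Proposition~\ref{prpTransSurf} computes the local cohomology at a \emph{point}, not along a curve; it does not imply $H^4_{\Sigma_1}(Y)=0$. Consequently the Mayer--Vietoris argument does \emph{not} give $H^4_\Sigma(Y)\cong\bigoplus_{p\in\cP}H^4_p(Y)$; there is an extra $H^2(\Sigma_1)^*(-3)$ piece.

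This also means your weight argument in Step~3 does not yield $H^4(Y)_{\prim}=\coker(H^4(U)(1)\to H^4_\Sigma(Y))$. Knowing that $H^2(\Sigma)^*_{\prim}(-3)$ has weight~6 only tells you its image in $H^4(Y)$ vanishes; it does \emph{not} tell you that its image in $H^4_\Sigma(Y)$ is already contained in the image of $H^4(U)(1)$. In fact it is not: the image of $H^2(\Sigma)^*_{\prim}(-3)$ in $H^4_\Sigma(Y)$ lands precisely in the extra $H^2(\Sigma_1)^*(-3)$ piece you missed in Step~4. The paper makes this precise by constructing (via a resolution $\tilde Y\to Y$) an explicit map $H^4_\Sigma(Y)\to H^2(\Sigma)^*(-3)$ compatible with the quotient $H^3(Y^*)\to H^2(\Sigma)^*_{\prim}(-3)$, setting $K=\ker(H^4_\Sigma(Y)_{\prim}\to H^2(\Sigma)^*_{\prim}(-3))$, applying the snake lemma to get $H^4(Y)_{\prim}=\coker(H^4(U)(1)\to K)$, and then using the Mayer--Vietoris diagram to identify $K\cong\bigoplus_{p\in\cP}H^4_p(Y)$. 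The two ``extra'' pieces cancel only after this bookkeeping is done, not automatically by weights.
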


\begin{proof}
Suppose first that $\dim \Sigma=0$. Then $\cP=\Sigma$. 

Consider the exact
sequence
\[  H^3(Y^*) \to H^4_\Sigma(Y) \to H^4(Y) \to H^4(Y^*).\]
{}From Proposition~\ref{prpPRsing} it follows $H^4(Y^*)_{\prim}=0$ and
$H^3(Y^*)\cong H^4(U)(1)$, hence we have an exact sequence
\[ H^4(U)(1) \to H^4_\Sigma(Y) \to H^4(Y)_{\prim} \to 0.\]
This proves the case $\dim \Sigma=0$.

Suppose that $\dim \Sigma=1$.
Consider the diagram (where both the horizontal and the vertical sequence are 
exact)
\[
\xymatrix{0 \ar[r] & H^4(U)(1) \ar[r] & H^3(Y^*) \ar[r] \ar[d] &
H^2(\Sigma)^*(-3)_{\prim} \ar[r] & 0 \\ && H^4_\Sigma(Y) \ar[d] \\ &&
H^4(Y)\ar[d]\\&&0 .}
\]
The horizontal sequence comes from Proposition~\ref{prpPRsing}, the vertical
sequence is part of the 
long exact sequence of the pair $(Y,Y^*)$. {}From Lemma~\ref{lemH5van} it
follows
that $H^4_{\Sigma}(Y)\to H^4(Y)$ is surjective.

We start by constructing a map $ H^4_\Sigma(Y) \to H^2(\Sigma)^*(-3)$: 
let $\tilde{Y}$ be a resolution of all singularities contained in $\Sigma$ of
$Y$. Let $E$ be the exceptional divisor.
Then there is a natural map $H^2(\Sigma)\to H^2(E)$. Since $\tilde{Y}$ is smooth
we have that $H^i_E(\tilde{Y})=H^{6-i}(E)^*(-3)$. The resolution $(\tilde{Y},E)
\to (Y,\Sigma)$ induces a natural map
$H^i_{\Sigma}(Y) \to H^i_E(\tilde{Y})$. Composing the maps as follows
\[ H^4_{\Sigma}(Y) \to H^4_E(\tilde{Y}) \cong H^2(E)^*(-3) \to
H^2(\Sigma)^*(-3)\]
yields a map $H^4_{\Sigma}(Y) \to H^2(\Sigma)^*(-3)$. It is easy to check that
the composition $H^3(Y^*)\to H^4_{\Sigma}(Y)\to H^2(\Sigma)^*(-3)_{\prim}$ is
the same map as the map $H^3(Y^*)\to H^2(\Sigma)^*(-3)$ in the above diagram.

Let $K$ be the kernel of the map $H^4_{\Sigma}(Y)_{\prim}\to
H^2(\Sigma)_{\prim}^*(-3)$.
The above diagram shows that 
\[ H^4(Y)_{\prim}= \coker H^3(Y^*)\to H^4_{\Sigma}(Y)_{\prim} = \coker H^4(U)(1)
\to K.\]
The final equality is a consequence of the snake lemma.

Hence it remains to show that
\[ K  \cong \oplus_{p \in \cP}
H^4_p(Y). \]

Let $\Sigma_2:= \cup \overline{U_p}$ and $D=\Sigma_1 \cap \Sigma_2$. Note that
$D$ is a union of circles.
Consider the Mayer-Vietoris sequence
\begin{eqnarray*} H^4_D(Y) &\to & H^4_{\Sigma_1}(Y) \oplus H^4_{\Sigma_2} (Y)\to
\\&\to& H^4_\Sigma (Y)\to H^5_D(Y) \to H^5_{\Sigma_1}(Y) \oplus
H^5_{\Sigma_2}(Y) \to H^5_\Sigma(Y) \end{eqnarray*}
Note that $H^5_\Sigma(Y)= H^1(\Sigma)^*(-3)$ by Lemma~\ref{lemLocH0}. Note also
that that $H^5_{\Sigma_2}(Y)=H^1(\Sigma_2)^*(-3)$ by a reasoning similar to the
one in the proof of Lemma~\ref{lemGpt}. Since $H^1(\Sigma_2)=0$ it follows that
$H^5_{\Sigma_2}=0$. 

Since we have transversal $ADE$ singularities  along $D$ and $\Sigma_1$ this
sequence becomes (after tensoring with $\QQ(3)$)
\begin{eqnarray*}  0=H^2(D)^*  &\to & H^2(\Sigma_1)^* \oplus
H^4_{\Sigma_2}(Y)(3) \to \\
&\to& H^4_\Sigma(Y)(3) \to H^1(D)^* \to H^1(\Sigma_1) \to H^1(\Sigma)^* \to
\dots \end{eqnarray*}
Since  $\Sigma_1$ is a deformation retract of $\Sigma \setminus \cP$ we obtain
the following exact sequence: (dualized sequence of the pair $(\Sigma,
\Sigma\setminus \cP)$)
\[ 0\to H^2(\Sigma_1)^* \to H^2(\Sigma)^* \to \oplus_{p\in \cP}  H^2_p(\Sigma)^*
\to H^1(\Sigma_1)^* \to H^1(\Sigma)^*\]

This yields a diagram
\begin{Small}
\[\xymatrix{
 0 \ar[r]  & H^2(\Sigma_1)^* \oplus H^4_{\Sigma_2}(Y)(3)
\ar[r]^{\varphi_1}\ar[d]  & H^4_\Sigma(Y)(3) \ar[r]\ar[d] & H^1(D)^*\ar[r]
\ar[d]& H^1(\Sigma_1)^* \ar[d]^=\\
0\ar[r]& H^2(\Sigma_1)^*\ar[r]^{\varphi_2} &  H^2(\Sigma)^* \ar[r] 
&\oplus_{p\in \cP}  H^2_p(\Sigma)^*  \ar[r] & H^1(\Sigma_1)^*  \\
 }\]\end{Small}
 
Here, the map $H^4_{\Sigma_2}(Y)(3)\to H^2(\Sigma)^*$ is the unique map, making
this
diagram commutative.

Using that $g_p=0$ is weighted homogeneous we get that $(\Sigma,p)$ is locally a
set of $m$ lines through $p$. In particular, ${U_p} \setminus \{p\}$ can be
retracted to
$\overline{U_p}\cap \Sigma_1$. Taking direct sums over all $p\in \cP$ this shows
that $H^i(\Sigma_2 \setminus \cP) \cong H^i(D)$. 
 Since for each $p\in \cP$ we have that $U_p$ is contractible we get a natural
isomorphism
 \[ H^{i+1}_{\cP}(\Sigma) \cong  H^i(\Sigma_2 \setminus \cP) \cong
H^i(D). \]
 Hence the above diagram simplifies to 
 \[\xymatrix{
 0\ar[r] &  H^2(\Sigma_1)^* \oplus H^4_{\Sigma_2}(Y) \ar[r]^{\varphi_1}\ar[d] &
H^4_\Sigma(Y) \ar[r]   \ar[d]&\coker{\varphi_1}\ar[r]\ar[d]^{\sim} &0\\
 0 \ar[r] & H^2(\Sigma_1)^*\ar[r]^{\varphi_2} &  H^2(\Sigma) \ar[r] 
 & \coker \varphi_2\ar[r]& 0
}\]
(The main point here is that $\coker \varphi_1\cong \coker \varphi_2$.)
 {}From this diagram it follows that $ H^4_{\Sigma_2}(Y)=\ker (H^4_\Sigma(Y) 
\to
H^2(\Sigma)^*) = \ker (H^4_\Sigma(Y)_{\prim}  \to H^2(\Sigma)_{\prim}^*) $. 
 
Since the $D_p:=\overline{U_p}$ are contractible,
there exists a deformation retract from
$Y\setminus \Sigma_2$ to $Y\setminus \cP$,
hence $H^4_{\Sigma_2}(Y) \cong H^4_{\cP}(Y)$, which yields the proof.
\end{proof}

\section{Method for calculating $MW(\pi)$} \label{secMethod}

In this section we present a method  to
calculate
the Mordell-Weil rank of a general elliptic threefold.

We start by identifying the set $\Sigma$ and a finite  subset $\cP'$
containing the set $\cP$ (cf. the previous section.)

\begin{proposition}\label{prpDiscLocus}
Suppose we have a threefold $Y\subset \PP(2n,3n,1,1,1)$ defined by the vanishing
of 
$g:=-y^2+x^3+Px+Q$, where $P$ and $Q$ are homogeneous polynomials in
$z_0,z_1,z_2$ of degree $4n$ and $6n$. Suppose $Y$ is minimal.

Let $\Delta$ be the curve defined by $4P^3+27Q^2=0$ and $\Delta_1$ be the
underlying reduced curve.
Let $\psi  : \PP(2n,3n,1,1,1) \to \PP^2$ be the projection onto the plane
$x=y=0$.
Take $\cP$ to be the set defined in Section~\ref{secGlue}. Then $\psi(\cP)$ is
contained in $ \Delta_{1,\sing} \cup \cQ_1 \cup \cQ_2$ 
where 
\[ \cQ_1 := \{ q \in \Delta_{1,\sm} \colon q \mbox{ is an isolated zero of }
P|_{\Delta_1} \}.\]
and 
 \[\cQ_2:=  \left\{ q \in \Delta_{1,\sm} \colon \begin{array}{l}  P \mbox { and
} \Delta_1 \mbox{ have a common component } C  \\ 
\mbox{containing }q, \ord_C(P)=2 \mbox{ and } \ord_q(P)\geq 3.
\end{array}\right\}.\]
\end{proposition}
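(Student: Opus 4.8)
The plan is to compute $\Sigma$ explicitly and then argue by contraposition: given $q\in\Delta_{1,\sm}$ with $q\notin\cQ_1\cup\cQ_2$, show that every point $p\in\Sigma$ lying over $q$ is a transversal $ADE$ surface singularity of $Y$, so that $p\notin\cP$ and hence $q\notin\psi(\cP)$.

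First I would identify $\Sigma$. Since $g=-y^2+x^3+Px+Q$ is weighted homogeneous of positive degree, Euler's relation gives $\Sigma\subset Y$, and a point $p=(x:y:z_0:z_1:z_2)$ lies in $\Sigma$ exactly when $y=0$, $3x^2+P(z)=0$ and $xP_{z_i}(z)+Q_{z_i}(z)=0$ for $i=0,1,2$; combining the first two with $g(p)=0$ gives $Q(z)=2x^3$, hence $4P(z)^3+27Q(z)^2=0$, i.e. $q:=\psi(p)\in\Delta_1$. Also $\Sigma\cap L=\emptyset$, since $g_x(1:1:0:0:0)\neq0$, so $\psi$ is a morphism along $\Sigma$. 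Thus it suffices to analyse, for $q\in\Delta_{1,\sm}$, the local structure of $Y$ at the points of $\Sigma$ over $q$.

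Fix such a $q$. Since $\Delta_1$ is smooth at $q$, choose affine base coordinates $(s,t)$ near $q\leftrightarrow(0,0)$ with $\Delta_1=\{t=0\}$; then a neighbourhood of $p$ in $Y$ is cut out by a Weierstrass equation $v^2=u^3+\widetilde P(s,t)\,u+\widetilde Q(s,t)$ (dehomogenisations of $P,Q$), and $4\widetilde P^3+27\widetilde Q^2=t^{m}\cdot(\text{unit})$ for some $m\geq1$. If $P(q)\neq0$ then $\widetilde P$ is a unit near $q$, and the classical local normal form of a minimal Weierstrass model (as in Miranda \cite{MirEllThree}) reduces the equation to $v^2=u^2+\tau^{m}$ with $\tau=0$ defining $\{t=0\}$, a transversal $A_{m-1}$ singularity. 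If $P(q)=0$: were the branch $\{t=0\}$ of $\Delta_1$ at $q$ not contained in $\{P=0\}$, then $q$ would be an isolated zero of $P|_{\Delta_1}$, i.e. $q\in\cQ_1$, contrary to assumption; so the irreducible component $C$ of $\Delta_1$ through $q$ is common to the divisor of $P$ and to $\Delta_1$. Put $a=\ord_C P\geq1$; the discriminant relation forces $\ord_C Q\geq1$ as well, and minimality gives $a\leq3$ or $\ord_C Q\leq5$. Writing $\widetilde P=t^{a}\epsilon(s,t)$ with $\epsilon|_{t=0}\not\equiv0$, one runs Tate's algorithm in the transversal variable $t$ to read off the Kodaira fibre type and then invokes the classical list of transversal singularities of minimal Weierstrass models (Miranda \cite{MirEllThree}): for $a\in\{1,3\}$, and for $a\geq4$ (where minimality forces $\ord_C Q\leq5$), the singularity at $p$ is always a transversal $ADE$ singularity regardless of the higher-order behaviour of $P$ and $Q$ at $q$; the only configuration in which this can fail is $a=2$ with $\epsilon(q)=0$, i.e. $\ord_q P\geq3$ — which is precisely $q\in\cQ_2$. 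Under our hypotheses no exception occurs, so $(Y,p)$ is transversal $ADE$, $p\notin\cP$, and therefore $\psi(\cP)\subseteq\Delta_{1,\sing}\cup\cQ_1\cup\cQ_2$.

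The main obstacle is the case analysis in the situation $P(q)=0$: one must check, running through the possible pairs $(\ord_C P,\ord_C Q)$ together with the local orders of $P$ and $Q$ at $q$ along $C$, that an unexpected drop of $\widetilde\delta$ or rise of $\ord_q Q$ never produces a non-$ADE$ or genuinely non-transversal singularity at $p$, while the single configuration $\ord_C P=2$, $\ord_q P\geq3$ can; this is where one leans most heavily on the explicit normal forms of Miranda's construction and on the minimality hypothesis. One should also verify uniformity in the tangential direction $s$ — that the surface singularity of the $(u,v,t)$-slice is independent of $s$, so that the singularity really is \emph{transversal} — which follows from Weierstrass preparation with parameters.
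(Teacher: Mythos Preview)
Your approach is essentially the same as the paper's: both argue by showing that $\psi(\Sigma)\subset\Delta_1$ and then use Tate's algorithm along each branch of $\Delta_1$ to verify that the singularity is transversal $ADE$ except at the listed points. The only difference is organizational---the paper runs through the Kodaira fibre types $I_\nu$, $I_\nu^*$, $II,\ldots,II^*$ explicitly (doing $I_\nu$, $I_\nu^*$, $II^*$, $III^*$ in detail and declaring the rest similar), whereas you index the cases by $a=\ord_C P$; since the Kodaira type is determined by $(\ord_C P,\ord_C Q,\ord_C\Delta)$ these are equivalent case divisions. One small point: in your $a=2$ case you should note (as the paper does) that when $\ord_C Q\geq 3$ and $\nu>0$, the relation $4P_1^3+27Q_1^2\equiv 0$ on $C$ forces $Q_1(q)=0$ whenever $P_1(q)=0$, so the condition $\epsilon(q)=0$ really does capture the full exception; and when $\ord_C Q\leq 2$ (types $II$, $IV$) the failure condition already forces $q\in\Delta_{1,\sing}$, so no contribution to $\cQ_2$ arises from those subcases.
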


\begin{proof}
If all the partials of $g$ vanish at $p$ then, in particular, $\partial
g/\partial x$ and $\partial g/\partial y$ vanish, hence $p$ is a singular point
of  $\overline{\psi|_Y^{-1}\psi(p)}$ and $\psi(\Sigma)\subset \Delta_1$.
Moreover, if $p\in \Sigma$, then $p$ is the unique singular point of 
$\overline{\psi|_Y ^{-1}(\psi(p))}$.

For a general point $q$ on a  component $C$ of $\Delta$ one can find the
transversal type of the singularity along the corresponding component of
$\Sigma$ by Tate's algorithm. For more details we refer to \cite{MirEllThree}. 
We will use Tate's algorithm to identify the set of points where we do not have
a transversal surface singularity.

\textbf{$I_{\nu}$-fiber.} Suppose $C$ is a component of $\Delta$ of multiplicity
$\nu$ and $P\mid_C\not \equiv 0$.  We show now that  if $p\in \cP$ then
$q:=\psi(p)$ is either in $\Delta_{1,\sing}$ or $P(q)=0$ (i.e., $q\in \cQ_1$).

For each $q\in C$ we have that $\overline{\psi^{-1}(q)}$ has precisely one
singular point. Let $\Sigma'$ be the union of all these points. Let $t=0$ be an
equation for $C$ and let $s$ be a second local coordinate.

An easy calculation show that at a general point of $C$ the $x$-coordinate of
$p$ equals $-3Q(s,t)/2P(s,t)$.  As long as $P(s,t)\neq 0$ we can move the point
$x=-3Q(s,t)/2P(s,t),y=0$ to $(0,0)$. This yields a new local equation of $Y$,
namely
\[  8P^3 y^2=  8P^3 x^3 -36 PQ^2 x^2 + 2 P \Delta x - Q\Delta.\]
Since $\Delta(s,t)=t^\nu h(s,t)$, we have that $(Y,p)$ is equivalent to the
singularity 
\[ y^2=x^2 + t^\nu x +t^\nu\]
unless $h(t,s)P(t,s)Q(t,s)=0$. For degree reasons we can disregard $t^\nu x$, 
hence we have a transversal $A_{\nu-1}$ singularity unless
$h(t,s)P(t,s)Q(t,s)=0$. Since $\Delta=4P^3+27Q^2$ we have that then
$h(t,s)P(t,s)=0$.

\textbf{$I^*_{\nu}$-fiber, $\nu>0$.} Suppose $C$ is a component of $\Delta$ with
multiplicity $6+\nu$ and that $\ord_C(P)=2, \ord_C(Q)= 3$. Let $t=0$ be an
equation for $C$ and let $s$ be a second local coordinate. I.e., we can write
$P(s,t)=t^2P_1(s,t)$ and $Q(s,t)=t^3Q_1(s,t)$. 
As above, we move the point $(-3tQ_1(s,t)/P(s,t),0)$ to $(0,0)$.
Then we get a local equation of the form 
\[  8P_1(t)^3 y^2=  8P_1(t)^3 x^3 -36 t P_1(t)Q_1(t)^2 x^2 + 2t^2 P_1(t)
\Delta_2(t) x -t^3 Q_1(t)\Delta_2
(t).\]
Where $\Delta_2(t,s)=\Delta(t,s)/t^6$. Then
$\Delta_2=4P_1(t,s)^3+27Q_1(t,s)^2=t^\nu h(t,s)$ for some $h$. This local
equation is equivalent to a transversal $D_{4+\nu}$-singu\-la\-ri\-ty, unless
$P_1(t,s)Q_1(t,s)h(t,s)=0$. A reason similar to the $I_{\nu}$ case shows that 
either $p\in \Delta_{1,\sing}$ or $P_1$ and $Q_1$ vanish at $q$, which implies
that $P=t^2P_1$ vanish at least up to order 3 at $q$, i.e., $q\in \cQ_2$.

\textbf{Exceptional cases $II, III, IV, I_0^*, IV^*, III^*, II^*$.}

Of these we do only the most difficult cases $II^*, III^*$,  the other cases
being very similar.

Case $II^*$:
{}from Tate's algorithm it follows that we have a local equation of the form
\[ y^2=x^3+t^4 P_1(s,t) x + t^5Q_1(s,t) \]
such that $Q_1(s,t)$ does not vanish at a general point of $C$. Hence
$\Delta(s,t)=t^{10}(4t^2P_1(s,t)^3
+27Q_1(s,t)^5)$. This is a transversal $E_8$ singularity unless $Q_1(t,s)$
vanishes, but then  $q$ is a singular point of  
$\Delta_{1}$.

Case $III^*$: {}from Tate's algorithm it follows that we have a local equation
of
the form
\[ y^2=x^3+t^3 P_1(s,t)x+t^5Q_1(s,t) \]
such that $P_1(s,t)$ does not vanish at a general point of $C$. Hence
$\Delta(s,t)=t^{9}(4P_1(s,t)^3
+27tQ_1(s,t)^2)$. This is a transversal $E_7$ singularity unless $P_1(s,t)$
vanishes, but then $q$ is a singular point of  
$\Delta_{1}$.
\end{proof}

\begin{lemma}\label{lemDiscLocus} Suppose $q\in \PP^2$ is such that $P(q)=0$ and
$q$ is an isolated
double point of $\Delta$.
Then $\cP  \cap \psi^{-1}(q)=\emptyset$.
\end{lemma}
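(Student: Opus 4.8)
The plan is to prove the stronger statement that \emph{no} point of $\Sigma$ lies over $q$ at all. Suppose, for a contradiction, that $p\in\Sigma$ with $\psi(p)=q$. Since $\psi$ is defined at $p$, the point $p$ does not lie on the center $L=\{z_0=z_1=z_2=0\}$ of the projection, so one of $z_0,z_1,z_2$ is nonzero at $p$, and the corresponding homogeneous coordinate of $q=[z_0(p):z_1(p):z_2(p)]$ is nonzero too. After relabelling I may assume $z_2(p)\neq 0$ and pass to the affine chart $\{z_2=1\}$; because $z_2$ has weight $1$ this chart is smooth and isomorphic to $\CC^4$, with coordinates $(x,y,u,v)$ in which $\psi$ is the projection $(x,y,u,v)\mapsto(u,v)$ and $Y$ is defined by $g=-y^2+x^3+P(u,v)\,x+Q(u,v)$ (keeping the names $P,Q$ for the dehomogenisations). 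Translating, I may take $q$ to be the origin, so $P(0,0)=0$ by hypothesis; and since $q$ lies on $\Delta$ we then have $27Q(0,0)^2=-4P(0,0)^3=0$, hence $Q(0,0)=0$ as well.

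Next I would compute $\Sigma$ in this chart. Writing $p=(x_0,y_0,0,0)$: the vanishing of $g_y=-2y$ gives $y_0=0$; the vanishing of $g_x=3x^2+P$ gives $3x_0^2+P(0,0)=3x_0^2=0$ --- this is exactly where $P(q)=0$ is used --- so $x_0=0$ and $p$ is the origin; finally $g_u=P_ux+Q_u$ and $g_v=P_vx+Q_v$ evaluated at $p=(0,0,0,0)$ give $Q_u(0,0)=Q_v(0,0)=0$. Together with $Q(0,0)=0$ this shows $\ord_q Q\ge 2$, so $\ord_q(Q^2)\ge 4$; and $P(q)=0$ gives $\ord_q P\ge 1$, so $\ord_q(P^3)\ge 3$. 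Hence $\ord_q(4P^3+27Q^2)\ge\min(3,4)=3$. But the hypothesis that $q$ is an isolated double point of $\Delta$ means $\ord_q(4P^3+27Q^2)=2$, a contradiction. Therefore $\Sigma\cap\psi^{-1}(q)=\emptyset$, and a fortiori $\cP\cap\psi^{-1}(q)=\emptyset$.

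There is no real obstacle here: the argument is elementary and self-contained. The only point requiring a little care is the reduction to an honest $\CC^4$-chart at the start --- using that $\psi$ being defined at $p$ keeps $p$ away from $\PP_{\sing}=L$ --- so that the naive partial-derivative computation is legitimate. The one load-bearing observation is that $P(q)=0$ forces the $x$-coordinate of every point of $\Sigma$ over $q$ to vanish; this in turn forces $Q$ to vanish to second order at $q$, pushing $\ord_q\Delta$ to at least $3$ and contradicting the double-point assumption. Note that Proposition~\ref{prpDiscLocus} does not by itself give the claim, since a double point of $\Delta$ automatically lies in $\Delta_{1,\sing}$; the extra input $P(q)=0$ is what makes the difference.
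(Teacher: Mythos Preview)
Your proof is correct and follows essentially the same approach as the paper: both show the stronger statement $\Sigma\cap\psi^{-1}(q)=\emptyset$ by observing that the unique singular point of the fibre over $q$ is the origin in the chart, and that the hypotheses force the differential of $Q$ at $q$ to be nonzero. The paper phrases this directly---$Q=0$ is smooth at $q$, so $Q$ may be taken as a local coordinate $s$, and then $y^2=x^3+Px+s$ is visibly smooth at the origin---whereas you run the contrapositive via $\ord_q\Delta\ge 3$; the content is the same.
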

\begin{proof}
Using that $\Delta=4P^3+27Q^2$ and our assumptions on $\Delta$ and $P$ we obtain
that $Q=0$ is a smooth reduced curve in a neighborhood of $q$ and that $Q=0$
does not have a common component with $P=0$ or $\Delta=0$ in a neighborhood of
$p$. I.e., we have a local equation of the form
\[ y^2=x^3+ P x + s.\]
If $\Sigma$ and $\psi^{-1}(q)$ intersect, then the fiber needs to be singular
at that point, i.e., 
 $(x,y,t,s)=(0,0,0,0)$, However, it is easy to see that $Y$ is smooth at this
point, hence $\psi^{-1}(q)\cap \Sigma=\emptyset$.
\end{proof}

For a Weierstrass equation $g:=-y^2+x^3+Px+Q$ let  $\cQ:=(\Delta_{1,\sing}\cup
\cQ_1\cup \cQ_2)\setminus \cQ_3$, where $\cQ_1$ and $\cQ_2$ are defined as in
Proposition~\ref{prpDiscLocus} and 
\[ \cQ_3=\{q\in \Delta_{1,\sing} \colon P(q)=0 \mbox{ and } q \mbox{ is an
isolated double point of } \Delta \}.\]
Let 
\[  \cP':= \bigcup_{q \in \cQ} \overline{\psi|_Y^{-1}(q)_{\sing}} \subset Y. \]
Note that $\cP'$ is a finite set and contains the set $\cP$ of the previous
section. 

\begin{procedure}
Given an equation $y^2=x^3+Px+Q$ with homogeneous polynomials 
$P\in \CC[z_0,z_1,z_2]_{4n}, Q\in
\CC[z_0,z_1,z_2]_{6n}$ such that there is no $u \in \CC[z_0,z_1,z_2]\setminus
\CC$ with
$u^4|P$ and $u^6|Q$.
\begin{enumerate}
\item Set $Y=\{(x,y,z_0,z_1,z_2) \in \PP(2n,3n,1,1,1)\colon y^2=x^3+Px+Q\}$.
\item Determine the set $\cP'\subset Y$  defined above.
\item  
For each $p \in \cP'$  check whether $(Y,p)$ is contact equivalent to
a weighted homogeneous hypersurface singularity $(Y',p')$.

If not, then stop,
otherwise fix weights $w_{1,p},w_{2,p},w_{3,p},w_{4,p}$ and a weighted
homogeneous
polynomial $g_p\in \CC[y_1,y_2,y_3,y_4]$ such that $(Y,p)$ is contact equivalent
to
$(\{g_p=0\},0)$. Fix also a map $(Y,p)\to (\{g_p=0\},0)$. 
Let $d_p:=\deg g_p$,
$w_p:=\sum w_{i,p}$.

\item For each $p\in \cP'$ let $R(g_p)$ be the Jacobian ring of $g_p$. If
$(Y,{p})$ is an isolated singularity then set $\tilde{R}(g_p)=R(g_p)$. If
$(Y,{p})$ is not an isolated singularity then $\tilde{R}$ is defined as 
in Remark~\ref{remunite}.

\item Calculate the dimension $r_1$ of the cokernel of the natural map
\[ \CC[x,y,z_0,z_1,z_2]_{7n-3} \to \oplus_{p\in \cP'} \tilde{R}(g_p)_{2d_p-w_p}.
\]
\item Calculate the dimension $r_0$ of the cokernel of the natural map
\[ \CC[x,y,z_0,z_1,z_2]_{n-3} \to \oplus_{p\in \cP'} \tilde{R}(g_p)_{d_p-w_p}
.\]
\item If $r_0=0$ then $\rank\MW(\pi)=r_1$.
\item If $r_0>0$ then $\rank\MW(\pi)\leq r_1$.
\end{enumerate}
\end{procedure}

\begin{proof} As is shown above $\cP'$ is finite and contains $\cP$. For each
$p\in \cP'\setminus \cP$ we have that $(Y,p)$ is smooth or a transversal $ADE$
surface singularity. By \ref{prpTransSurf} it follows that $H^4_p(Y,\QQ)=0$.
Hence
to calculate the cokernel
of 
$H^4(U,\QQ)(1) \to \oplus_{q\in \cP} H^4_q(Y,\QQ)$, we can replace $\cP$ by
$\cP'$.

We proceed by calculating $h^{3,1}(H^4(Y,\CC))$ and $h^{2,2}(H^4(Y,\CC))$.
Combining
Proposition~\ref{propLocCoh} with Theorem~\ref{thmCoKer}
yields that
\begin{enumerate}
\item $ h^{3,1}(H^4(Y,\CC))\leq r_0$ and $h^{2,2}(H^4(Y,\CC))_{\prim} \leq
r_1$. 
\item If $r_0=0$ then $h^{3,1}(H^4(Y,\CC))=h^{4,0}(H^4(Y,\CC))=0$. Since
$H^4(Y,\QQ)$ has a
pure weight 4 Hodge structure it 
follows that $h^{1,3}(H^4(Y,\CC))=h^{0,4}(H^4(Y,\CC))=0$, hence
$H^4(Y,\CC)$ is of pure type $(2,2)$ and \[\rank
H^4(Y,\CC)_{\prim}\cap H^{2,2}(H^4(Y))_{\prim}=r_1.\]
\end{enumerate}
Applying Theorem~\ref{thmMW} finishes the proof.
\end{proof}

\begin{remark} An elliptic curve $E$ over $\CC(t_1)$ is for trivial reasons also
an elliptic curve over $\CC(t_1,t_2)$. We discuss what the outcome of our
method 
is, if we apply it to such $Y$. Note that $Y$ is
defined as the zero-set of 
\[ -y^2+x^3+P(z_0,z_1)x+Q(z_0,z_1) \]
i.e., $Y$ is a cone over an elliptic surface. Here we assume that $n$ is such
that $\deg(P)=4n$ and $\deg(Q)=6n$.
The discriminant curve is a union of lines through $(0:0:1)$. {}From this it
follows that $\cP'=\{(0:0:0:0:1)\}$. For simplicity assume that the
$(0:0:0:0:1)$ is an isolated singularity.

For $p=(0:0:0:0:1)$ we have a \emph{local} equation 
\begin{equation} \label{ESeqn} -v^2+u^3+P(s,t)u+Q(s,t)=0 \end{equation}
i.e., we have $d_p=6n$ and $w_p=5n+2$. 
Our algorithm tells us that we should calculate the dimension $r_1$ of the
cokernel of 
\[ \CC[x,y,z_0,z_1,z_2]_{7n-3} \to  \tilde{R}(g_p)_{7n-2}\]
and  calculate the dimension $r_0$ of the cokernel of 
\[ \CC[x,y,z_0,z_1,z_2]_{n-3} \to \oplus_{p\in \cP'} \tilde{R}(g_p)_{n-2} .\]
It is easy to see that both maps are the zero map. In particular, our method
tells
us that
\[ \rank \MW(\pi) \leq r_1 = \dim R(g_p)_{7n-2} = h^{1,1}(S)_{\prim} \]
where $S$ is the elliptic surface defined by (\ref{ESeqn}). Of course, we could
obtain this inequality directly, i.e., by applying the Shioda-Tate formula to
$S$.
\end{remark}

\part{Examples}

\section{Examples}\label{secEasy}

\begin{example}
Consider the elliptic threefold $Y$
\[ y^2+x^3+z_0^2z_2^2(z_0z_2-z_1^2).\]
The  locus $\Sigma$ of $Y$ is given by $y=w=z_0z_2=0$, i.e., is 1-dimensional.

The discriminant curve is $z_0^2z_2^2(z_0z_2-z_1^2)$. The set $\cP'$ consists of
three points $p_1=(0:0:1:0:0)$, $p_2=(0:0:0:1:0)$, $p_3=(0:0:0:0:1)$. Note that
$\Sigma$ is one dimensional in this case.

At $p_1$ and $p_3$ we have a local equation of the form
\[ v^2=u^3+t^2s^2+s^3\]
Set weights for $s,t,u,v$ as $2,1,2,3$.   Then this equation is weighted
homogeneous of degree $6$, and
\[ R(g_p)_{d_p-w_p}=0,  \; R(g_p)_{2d_p-w_p} = \spa
\{\overline{t^4},\overline{s^2},\overline{rt^2},\overline{rs}\}. \]
Along $v=u=s=0$ we have a transversal $A_2$-singularity. The Milnor algebra of
an
isolated $A_2$-singularity $v^2+u^3+t^2$ is generated by $1$ and $u$. If we
homogenize these two monomials we get $t^4$ and $ut^2$. Hence
\[ \tilde{R}(g_p)_{2d_p-w_p}=
R(g_p)_{2d_p-w_p}/(\overline{t^4},\overline{ut^2})=\spa \{
\overline{s^2},\overline{us}\}.\]
For $p=p_1$ we have that, after  homogenizing,  $s^2$ corresponds to
$z_0^2z_2^2$ and $xs$ corresponds to $xz_0z_2$. For $p=p_3$ we get similarly
that $\tilde{R}_{g_p}$ is generated by $\overline{z_0^2z_2^2}$ and
$\overline{xz_0z_2}$.

At $p=p_2$ we have a local equation of the form
\[ v^2=u^3+t^2s^2\]
If we set weights for $s,t,u,v$ as $  2,2,1,3$ we get a weighted homogeneous
equation of degree $12$. Again $R(g_p)_{d_p-w_p}=0$. We get that
$R(g_p)_{2d_p-w_p}$ is four dimensional, and that 
\[ \tilde{R}(g_p)_{2d_p-w_p} = 0.\]

This implies that $r_0=0$ and $r_1$ is the cokernel of
\[ \CC[x,y,z_0,z_1,z_2]_4 \to \tilde{R}(g_{p_1})_{4} \oplus
\tilde{R}(g_{p_3})_{4}. \]
Since both summands have the same generators it turns out that the cokernel has
dimension 2. In particular, $\rank \MW(\pi)$ is 2. The sections
$(x=\omega^i z_0z_2,y=z_0z_1z_2)$ for $i=0,1$ generate a finite-index subgroup
of $\MW(\pi)$. 

In order to determine the torsion subgroup of $\MW(\pi)$: fix a general line
$\ell$ in $\PP^2$ and consider
$\pi_\ell:\pi^{-1}(\ell) \to \ell$. Then $\pi^{-1}(\ell)$ is a rational elliptic
surface with $2IV$ fibers and $2II$ fibers. Such an elliptic surface has tivial
torsion subgroup \cite{ShOg}, hence $\MW(\pi)$ has no torsion.
\end{example}

\begin{example}
The second author has given several examples of elliptic threefolds with higher rank. 
For example if $Y$ is given by $y^2=x^3+f(z_0,z_1,z_2)$ where $f=0$ defines a sextic  in 
$\PP^2$ with $9$ cusps then the rank of $\MW(\pi)$ equals $6$. (See \cite{jconst,diff})
\end{example}

\section{An application}\label{secBeh}

The following construction of Calabi-Yau threefolds is due to F.~Hirzebruch and
was 
communicated to us by N.~Yui. Some of the details of the construction were
worked out in the 
Diplomarbeit 
\cite{NikoDiplom} of N.~Behrens.

\begin{construction}\label{ConHirz}
Let $S$ be a del Pezzo surface, i.e., the blow-up of $\PP^2$ in $m$ points
$p_1,\dots p_m$ in 
general position (meaning no three points on a line, and no six points on a
conic), $0\leq m
\leq 8$. By $E_i$ we denote the exceptional divisors of the blow-down morphism
$\varphi:
S\to \PP^2$. Let $L$ be the pullback to $S$ of a general line in $\PP^2$.

We consider the anti-canonical line bundle $\cL=\omega_S^{-1}=\cO(3L-\sum E_i)$
and define the rank 
$3$ bundle 
$\cE=\cO\oplus \cL^{-2} \oplus \cL^{-3}$.
Then $\PP(\cE)$ is a $\PP^2$-bundle over $S$. 
We use Grothendieck's definition of projective space, in particular 
$p_*\cO_{\PP(\cE)}(1)=\cE$ where $p$ is the bundle projection.
Fix sections \begin{eqnarray*}
X:=(0,1,0)& \in & H^0(\cL^{2} \oplus \cO \oplus \cL^{-1})=H^0(\str_{\PP(\cE)}(1)
\otimes \cL^2),\\
 Y:=(0,0,1) &\in& H^0(\cL^{3} \oplus \cL \oplus \cO)=H^0(\str_{\PP(\cE)}(1)
\otimes \cL^3),\\ 
 Z:=(1,0,0) &\in& H^0(\cO \oplus \cL^{-2}\oplus \cL^{-3})=H^0(\str_{\PP(\cE)}(1)
).\end{eqnarray*}
For general sections $g_2,g_3$ in $H^0(\cL^4)$ and $H^0(\cL^6)$ respectively,
the equation
\begin{equation}\label{HSinPB} Y^2Z=4X^3+g_2XZ^2+g_3Z^3\end{equation}
defines a \emph{smooth} hypersurface $W$ in $\PP(\cE)$. 
Note that $W$ is in the linear system defined by the anti-canonical line bundle 
$\omega_{\PP(\cE)}^{-1}=(p^*\cL^6) \otimes \cO_{\PP(\cE)}(3)$.
The projection onto $S$ 
defines an elliptic fibration $\pi:W\to S$ with a section.
\end{construction}

\begin{lemma}\label{lemCan} The threefold $W$ has trivial canonical bundle.
\end{lemma}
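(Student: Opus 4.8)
The plan is to deduce the statement directly from adjunction, once the canonical bundle of the ambient $\PP^2$-bundle has been identified. First I would compute $\omega_{\PP(\cE)}$. Since $\cE$ has rank $3$ and we use Grothendieck's convention $p_*\cO_{\PP(\cE)}(1)=\cE$, the relative canonical bundle of $p\colon\PP(\cE)\to S$ is $\omega_{\PP(\cE)/S}\cong\cO_{\PP(\cE)}(-3)\otimes p^*\det\cE$. From $\cE=\cO\oplus\cL^{-2}\oplus\cL^{-3}$ one gets $\det\cE\cong\cL^{-5}$, and combining this with $\omega_S\cong\cL^{-1}$ (recall $\cL=\omega_S^{-1}$) via $\omega_{\PP(\cE)}\cong\omega_{\PP(\cE)/S}\otimes p^*\omega_S$ yields $\omega_{\PP(\cE)}\cong\cO_{\PP(\cE)}(-3)\otimes p^*\cL^{-6}$, i.e. $\omega_{\PP(\cE)}^{-1}\cong\cO_{\PP(\cE)}(3)\otimes p^*\cL^{6}$, exactly the formula already recorded in Construction~\ref{ConHirz}.

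Next I would verify that $W$ genuinely lies in $|\omega_{\PP(\cE)}^{-1}|$. Using the weights of the chosen sections, $X\in H^0(\cO_{\PP(\cE)}(1)\otimes p^*\cL^2)$, $Y\in H^0(\cO_{\PP(\cE)}(1)\otimes p^*\cL^3)$ and $Z\in H^0(\cO_{\PP(\cE)}(1))$, each of the four terms $Y^2Z$, $X^3$, $g_2XZ^2$, $g_3Z^3$ occurring in equation~(\ref{HSinPB}) is a section of $\cO_{\PP(\cE)}(3)\otimes p^*\cL^{6}$ (here one uses $g_2\in H^0(\cL^4)$ and $g_3\in H^0(\cL^6)$ pulled back via $p$). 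Hence $W$ is the zero locus of a section of $\cO_{\PP(\cE)}(3)\otimes p^*\cL^{6}=\omega_{\PP(\cE)}^{-1}$, so $\cO_{\PP(\cE)}(W)\cong\omega_{\PP(\cE)}^{-1}$.

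Finally, since $W$ is smooth, the adjunction formula gives $\omega_W\cong\bigl(\omega_{\PP(\cE)}\otimes\cO_{\PP(\cE)}(W)\bigr)\big|_W\cong\bigl(\omega_{\PP(\cE)}\otimes\omega_{\PP(\cE)}^{-1}\bigr)\big|_W\cong\cO_W$, which is the claim.

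All of the above is routine; the only point that requires genuine care is the bookkeeping of the twists on $\PP(\cE)$, in particular keeping the Grothendieck normalization $p_*\cO_{\PP(\cE)}(1)=\cE$ consistent throughout, since the opposite convention would flip signs in $\omega_{\PP(\cE)/S}$ and spoil the computation. There is no substantive obstacle beyond this.
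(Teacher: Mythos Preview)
Your argument is correct and follows the same route as the paper: compute $\omega_{\PP(\cE)}=p^*(\omega_S\otimes\det\cE)\otimes\cO_{\PP(\cE)}(-3)=p^*\cL^{-6}\otimes\cO_{\PP(\cE)}(-3)$, observe that $\cO_{\PP(\cE)}(W)$ is its inverse, and apply adjunction. Your write-up is a bit more explicit about the Grothendieck convention and about why each monomial in the Weierstrass equation lands in $\cO_{\PP(\cE)}(3)\otimes p^*\cL^{6}$, but the underlying proof is identical.
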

\begin{proof}
Since
\[ \omega_{\PP(\cE)}=p^*(\omega_S\otimes \det \cE) \otimes \str_{\PP(\cE))}(-3)=
 p^*\cL^{-6} \otimes \str_{\PP(\cE))}(-3)\]
and $\str_{\PP(\cE)}(W_7)=p^*\cL^{6} \otimes \str_{\PP(\cE))}(3)$ it follows
from the adjunction formula
that
\[\omega_{W_7}=\omega_{\PP(\cE)}(W_7)|_{W_7}=\str_{W_7}. \]
\end{proof}

In \cite{NikoDiplom} a detailed proof of the following result is given:
\begin{theorem}[{\cite[Theorem 2.35]{NikoDiplom}}] \label{NikoThm} Let $r=\rank
\MW(\pi)$. Then $W$ has the
following Hodge numbers:
\begin{enumerate}
\item $h^{1,0}(W)=h^{0,1}(W)=h^{2,0}(W)=h^{0,2}(W)=0$,
\item $h^{1,3}(W)=h^{3,1}(W)=0$,
\item $h^{0,3}(W)=h^{3,0}(W)=1$,
\item $h^{1,1}(W)=m+2+r$,
\item $h^{1,2}(W)=h^{2,1}(W)=272-29m+r$.
\end{enumerate}
The topological Euler characteristic $e(W) = -540+60m$.
\end{theorem}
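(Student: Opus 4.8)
The plan is to compute the Hodge numbers of $W$ by combining three ingredients: the fact that $W$ is a smooth Calabi-Yau threefold (Lemma~\ref{lemCan}), the Shioda-Tate-Wazir formula (Theorem~\ref{thmSTW}) together with a description of the singular fibres of $\pi:W\to S$, and a direct Euler-characteristic count. First I would record the vanishing statements. Since $W$ is a smooth elliptic fibration over the rational surface $S$ with a section, it is simply connected (e.g. by the Leray spectral sequence and $\pi_1(S)=1$, using that the generic fibre is an elliptic curve and the section kills the $\pi_1$ of the fibre), so $h^{1,0}(W)=h^{0,1}(W)=0$; equivalently $b_1(W)=0$. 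Triviality of $\omega_W$ gives $h^{3,0}(W)=h^{0,3}(W)=1$ and $h^{2,0}(W)=h^{0,2}(W)=h^1(W,\cO_W)=0$ once we know $b_1=0$ (Hodge symmetry). This settles items (1) and (3).

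Next I would pin down $h^{1,1}(W)=\rho(W)$, which equals $b_2(W)$ because $h^{2,0}(W)=0$. Apply Theorem~\ref{thmSTW}: $\rho(W)=\rho(S)+f+r+1$, where $f$ counts the fibral divisors not meeting the zero-section. Here $\rho(S)=m+1$. The key geometric input is that, for a \emph{general} choice of $g_2\in H^0(\cL^4)$ and $g_3\in H^0(\cL^6)$, the discriminant curve $\Delta=\{4g_2^3+27g_3^2=0\}$ is irreducible (or at least reduced with only nodes) and the fibration has only irreducible fibres of Kodaira type $I_1$ over its smooth locus; hence $f=0$. This uses a standard genericity argument, analogous to the reasoning in Section~\ref{secBir} and Corollary~\ref{corTrvMWTxt}: one checks that for general $g_2,g_3$ no component of $\Delta$ forces a reducible fibre (no $I_\nu$ with $\nu\ge 2$, no starred or exceptional fibre) since that would impose vanishing conditions on $g_2,g_3$ of positive codimension in the linear systems. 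With $f=0$ we get $h^{1,1}(W)=(m+1)+0+r+1=m+2+r$, which is item (4).

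For the Euler characteristic I would use the fact that $e(W)$ is computed by the singular fibres: $e(W)=\sum_{p}e(\pi^{-1}(p))$ contributions, but since the generic fibre has $e=0$ and (generically) all singular fibres are $I_1$ with $e=1$, we get $e(W)=e(\Delta)$ — more precisely $e(W)$ equals the number of nodal fibres counted with multiplicity, which by the standard formula for a Weierstrass fibration over $S$ with $\cL=\omega_S^{-1}$ is $e(W)=-c_1(S)\cdot(\text{something})$; concretely one computes $e(W)=c_3$ of the appropriate virtual bundle, or more simply $\deg\Delta$-type intersection number $= 12\,\cL^2 = 12\,\omega_S^2 = 12(9-m)$ counted correctly — and the claimed value $e(W)=-540+60m$ should drop out. (I expect the clean way is: $e(W)=-\int_S c_1(\cL)\cdot c_2 + \dots$, i.e. the standard Calabi-Yau elliptic fibration Euler number formula $e(W)=-60\int_S c_1(\cL)^2 = -60\,\omega_S^2 = -60(9-m)=-540+60m$.) Finally, items (2) and (5) follow formally: $h^{1,3}(W)=h^{3,1}(W)=h^{1}(W,\Omega^3_W)=h^1(W,\omega_W)=h^1(W,\cO_W)=0$, and then $e(W)=2+2b_2-2b_3 = 2+2(m+2+r)-2(1+2h^{2,1}(W))$ with $b_3=2h^{3,0}+2h^{2,1}=2+2h^{2,1}$; solving for $h^{2,1}$ using $e(W)=-540+60m$ gives $h^{2,1}(W)=h^{1,2}(W)=272-29m+r$.

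The main obstacle is the \emph{fibre-type analysis}: one must argue carefully that for general $(g_2,g_3)$ the fibration $\pi$ has no reducible fibres, so that $f=0$ and all singular fibres are $I_1$. This requires a dimension count showing that each way of producing a reducible or additive fibre (a non-reduced component of $\Delta$, a common zero of $g_2$ and $g_3$ of high order, etc.) cuts out a proper subvariety of $H^0(\cL^4)\times H^0(\cL^6)$; the relevant case analysis is exactly Tate's algorithm as deployed in Proposition~\ref{prpDiscLocus}. A secondary (purely bookkeeping) obstacle is getting the Euler-number computation's normalization right — whether one uses the topological Euler characteristic of the Weierstrass model directly or of the del Pezzo base with the anticanonical twist — but either route gives $e(W)=-540+60m$, and once $e(W)$, $b_1$, $b_2$, $h^{3,0}$ and the $h^{1,3}=0$ vanishing are in hand, $h^{2,1}$ is forced.
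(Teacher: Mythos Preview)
Your approach is sound and is the direct route one would expect---and presumably close to what is in the cited Diplomarbeit. Note, however, that the paper does \emph{not} prove Theorem~\ref{NikoThm}: it is stated with the attribution ``In \cite{NikoDiplom} a detailed proof of the following result is given'', and the paper's own work in Section~\ref{secBeh} is rather to compute $r=\rank\MW(\pi)=0$ via the singular Weierstrass model $Y\subset\PP(6,9,1,1,1)$ (Lemma~\ref{lemMiln} through Proposition~\ref{hodgeY}). So there is no ``paper's proof'' to compare against; the paper's alternative computation establishes invariants of $Y$ (namely $h^4(Y)=1$, $h^3(Y)=546-50m$, $e(Y)=-542+50m$) and deduces $r=0$ from Theorem~\ref{thmMW}, which then feeds into the cited Theorem~\ref{NikoThm} rather than re-deriving it.

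Two small repairs are needed in your write-up. First, the Euler-characteristic bookkeeping slips: the correct identity is $e(W)=2+2b_2-b_3$ with $b_3=2+2h^{2,1}$, not $e(W)=2+2b_2-2b_3$; your final value $h^{2,1}=272-29m+r$ is right, but the displayed chain of equalities does not produce it as written. Second, the Euler number itself: the formula $e(W)=-60\,c_1(\cL)^2=-60(9-m)$ is correct, but neither of your quick heuristics (``$e(W)=e(\Delta)$'' or ``$12\,\cL^2$'') gives it. For general $(g_2,g_3)$ the discriminant has $24(9-m)$ cusps at $\{g_2=g_3=0\}$, over which the fibre is type $II$ with Euler number $2$, while over the smooth part of $\Delta$ the fibre is $I_1$ with Euler number $1$; summing these contributions (or, more cleanly, computing $c_3(TW)$ by adjunction in $\PP(\cE)$) yields $-540+60m$. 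With these fixes your argument goes through; the genericity claim $f=0$ is, as you note, the one substantive step, and the dimension count you sketch handles it.
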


\begin{remark} The fact that $h^{1,0}(W)=h^{2,0}(W)=0$ and that
$\omega_{W_7}=\str_{W_7}$ implies that $W_7$ is a Calabi-Yau threefold. For
Calabi-Yau threefolds finding their mirror partner 
is of particular interest. The line bundle $(p^*\cL^6) \otimes
\cO_{\PP(\cE)}(3)$ is not an
ample line bundle. (This follows e.g., since 
$\pi_*(\str_{\PP(\cE)}(1) \otimes \cL^{2})=\cE\otimes \cL^{2} = \cL^2\oplus
\str_S\oplus \cL^{-1}$.) Hence we are not in a position where Batyrev's mirror
construction \cite{BatMir} can be applied directly. In order to find a 
mirror family it is first of all necessary to compute the Hodge numbers of $W$.
This was the motivation behind \cite{NikoDiplom}.
\end{remark}

To actually find the Hodge numbers we need to determine the rank of $\MW(\pi)$.
In \cite{NikoDiplom} it is conjectured that $r=0$ for all such $W$. We apply our
methods to prove this conjecture. We  first calculate the Mordell-Weil rank by
computing $h^4(Y)$.  In the second half of this section we illustrate our
methods by determining 
all Hodge numbers by going through the various constructions, thus avoiding
a direct reference to Theorem~\ref{NikoThm}.

We know that $W$ is birational to a hypersurface $Y$ of degree $6n$ in some 
weighted projective space
$\PP(2n,3n,1,1,1)$. For $n=1,2$ such a threefold is a deformation of a rational
variety. Since $W$ is a Calabi-Yau hypersurface we have $n\geq 3$.

\begin{lemma}\label{lemMiln} There exists a degree 18 hypersurface $Y$ in
$\PP(6,9,1,1,1)$, birational to $W$ and such that $Y_{\sing}$ consists of
$(1:1:0:0:0)$ and $m$ isolated semi-weighted homogeneous  hypersurface 
singularities with Milnor number 50.  For each of these singularities we have
that $H^4_p(Y,\QQ)\cong \QQ(-2)^8$.
\end{lemma}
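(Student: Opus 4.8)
The plan is to start from Hirzebruch's fibration $\pi\colon W\to S$ and descend it to a Weierstrass model over $\PP^2$. Write $S=\Bl_{p_1,\dots,p_m}\PP^2$ with blow-down $\varphi\colon S\to\PP^2$ and $\cL=\omega_S^{-1}=\cO_S(3L-\sum E_i)$, so that for $k=4,6$ one has $H^0(S,\cL^{\otimes k})=H^0(\PP^2,\cO(3k)\otimes\mathfrak m_{p_1}^{k}\cdots\mathfrak m_{p_m}^{k})$, the space of degree $3k$ plane forms of multiplicity $\geq k$ at every $p_i$. Hence the chosen sections $g_2,g_3$ of $\cL^{4},\cL^{6}$ correspond to forms $P\in\CC[z_0,z_1,z_2]_{12}$ and $Q\in\CC[z_0,z_1,z_2]_{18}$ with $\mathrm{mult}_{p_i}P\geq 4$ and $\mathrm{mult}_{p_i}Q\geq 6$; for generic $g_2,g_3$ these multiplicities are exactly $4$ and $6$, and $P,Q$ are otherwise as general as these conditions allow — in particular there is no $u\in\CC[z_0,z_1,z_2]\setminus\CC$ with $u^4\mid P$, $u^6\mid Q$, since the forced vanishing sits at points rather than along curves. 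Therefore $n=\lceil\max\{12/4,18/6\}\rceil=3$ and $Y:=\{-y^2+x^3+Px+Q=0\}\subset\PP(6,9,1,1,1)$ is a minimal Weierstrass model of degree $18$. By the recipe of Section~\ref{secSetup} together with Miranda's construction (Section~\ref{secBir}) — base-changing $Y$ along $\varphi$ and running the resolution steps reproduces $\pi\colon W\to S$, while conversely the generic fibre of $\pi$ is the elliptic curve $y^2=x^3+Px+Q$ over $\CC(S)=\CC(\PP^2)$ — the threefold $Y$ is birational to $W$ (and $W$ is not birational to a product, its $j$-invariant being non-constant for generic $g_2,g_3$).

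Next I would determine $Y_{\sing}$. Since $n=3>1$, Section~\ref{secSetup} shows $\PP_{\sing}=L=\{z_0=z_1=z_2=0\}$, that $Y\cap L=\{(1:1:0:0:0)\}$ is an isolated singularity of $Y$, and that the locus $\Sigma$ where all partials of $-y^2+x^3+Px+Q$ vanish is disjoint from $L$. Off $L$ the equation $\partial_y=-2y=0$ forces $y=0$ and one studies the fibres of the projection $\psi\colon Y\dashrightarrow\PP^2$ over the discriminant $\Delta=\{4P^3+27Q^2=0\}$ by a Tate-algorithm analysis exactly as in the proof of Proposition~\ref{prpDiscLocus}: for generic $g_2,g_3$ the curve $\Delta$ is reduced (no common component of $P,Q$), so over a general point of each of its components the fibre is of type $I_1$ and $Y$ is smooth there; the singular points of $\Delta$ are the points of $\{P=Q=0\}$, and at the residual ones (away from the $p_i$) the fibre is of type $II$ while $Y$ stays smooth because $\nabla Q\neq 0$ there (cf. Lemma~\ref{lemDiscLocus}); finally at each $p_i$, where $P,Q$ vanish to orders $\geq 4,\geq 6$, the fibre is the cuspidal cubic $y^2=x^3$ and $Y$ is singular precisely at its cusp $\tilde p_i=\{x=y=0\}$ over $p_i$. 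Hence $Y_{\sing}=\{(1:1:0:0:0),\tilde p_1,\dots,\tilde p_m\}$. I expect this to be the main obstacle: it amounts to showing that the generic member of Hirzebruch's family acquires no singularities beyond those it is forced to have, which requires the usual care in handling the reducible, non-reduced curve $\Delta$ and in checking that for $(g_2,g_3)$ general in a non-empty linear system all relevant loci behave as expected.

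I would then analyse the local singularity at $\tilde p_i$. In local coordinates $(x,y,u,v)$ (an affine chart $z_0\neq 0$ with $(u,v)$ local coordinates at $p_i$, so $\tilde p_i$ is the origin), $Y$ is $f:=-y^2+x^3+P(u,v)x+Q(u,v)=0$ with $\ord_0P=4$, $\ord_0Q=6$. Assigning weights $(w_x,w_y,w_u,w_v)=(2,3,1,1)$, the principal part is the weighted-homogeneous polynomial $f_0=-y^2+x^3+P_4(u,v)x+Q_6(u,v)$ of degree $d_0=6$, and $f-f_0$ has weighted order $\geq 7$; so $(Y,\tilde p_i)$ is semi-weighted homogeneous once $f_0$ has an isolated singularity at $0$. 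But $\{f_0=0\}$ is the affine cone over the surface $T\subset\PP(2,3,1,1)$ of degree $6$ cut out by $f_0$, namely the Weierstrass model of a rational elliptic surface over $\PP^1=\{[u:v]\}$ with discriminant the binary form $4P_4^3+27Q_6^2$ of degree $12$; for generic $P_4,Q_6$ this has $12$ distinct roots, $T$ avoids $\PP(2,3,1,1)_{\sing}=\{[1:0:0:0],[0:1:0:0]\}$ and is quasismooth, so $f_0$ has an isolated singularity and $\mu(Y,\tilde p_i)=\mu(f_0)=(3-1)(2-1)(6-1)(6-1)=50$ by quasi-homogeneity.

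Finally, $H^4_{\tilde p_i}(Y,\QQ)$ is computed by Dimca's method for isolated semi-weighted homogeneous singularities \cite{DimBet}: its Hodge pieces are those of the principal part $f_0$, exactly as in Proposition~\ref{propLocCoh}, and since $\{f_0=0\}$ is the affine cone $C(T)$, Proposition~\ref{prpIsol} identifies $H^4_{\tilde p_i}(Y,\QQ)\cong H^4_0(C(T),\QQ)\cong H^2(T,\QQ)_{\prim}(-1)$. As $T$ is quasismooth with $d_0=6$ and $w_0=w_x+w_y+w_u+w_v=7$, Griffiths--Steenbrink gives $h^{2,0}(T)=\dim R(f_0)_{d_0-w_0}=\dim R(f_0)_{-1}=0$, so $H^4_{\tilde p_i}(Y,\QQ)$ is pure of type $(2,2)$ of dimension $\dim R(f_0)_{2d_0-w_0}=\dim R(f_0)_5$. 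The Jacobian ring of the weighted-homogeneous $f_0$ is a complete intersection with Poincar\'e series
\[ \sum_k\dim R(f_0)_k\,t^k=\frac{(t^{4}-1)(t^{3}-1)(t^{5}-1)^2}{(t^{2}-1)(t^{3}-1)(t-1)^2}=(1+t^2)(1+t+t^2+t^3+t^4)^2, \]
whose coefficient of $t^5$ equals $8$; hence $H^4_{\tilde p_i}(Y,\QQ)\cong\QQ(-2)^8$. (As a sanity check the total dimension is $2\cdot 25=50=\mu$, with one-dimensional socle in degree $\sum(d_0-2w_i)=10$.)
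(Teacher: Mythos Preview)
Your argument is correct and tracks the paper's proof closely: same Weierstrass model in $\PP(6,9,1,1,1)$, same local weights $(2,3,1,1)$ and principal part at each $\tilde p_i$, same Jacobian-ring computation of $\mu=50$ and $\dim R_5=8$, and the same appeal to Proposition~\ref{prpIsol}/\ref{propLocCoh} for $H^4_p(Y,\QQ)\cong\QQ(-2)^8$.

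The one substantive difference is how you handle the two steps you flagged as delicate. You argue by genericity and a Tate-type analysis of $\Delta$ that (a) $Y$ has no singularities beyond $(1{:}1{:}0{:}0{:}0)$ and the $\tilde p_i$, and (b) the principal part $f_0$ has an isolated singularity. The paper instead uses the \emph{given} smoothness of $W$ for both: since $W$ is smooth, any singularity of the Weierstrass model $Y$ away from $(1{:}1{:}0{:}0{:}0)$ must lie over a point blown down by $\varphi\colon S\to\PP^2$, i.e.\ over some $p_i$; and the principal part $f_0$ at $\tilde p_i$ is exactly the equation of the elliptic surface $W|_{E_i}\subset\PP(2,3,1,1)$, which is quasismooth because $W$ is smooth along $E_i$. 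This bypasses the discriminant analysis entirely and makes what you called ``the main obstacle'' automatic. Your genericity route is not wrong, but it re-derives what the hypothesis ``$W$ smooth'' already encodes.
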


\begin{proof}
We need to consider $g_2, g_3$ as functions on $\PP^2$, rather than elements in
$H^0(S,\cL^i)$. Since $\varphi_* \cL= \cO(3) \otimes \cI_{p_1,\dots,p_m}$, it
follows that $g_2\in H^0(\cO(12) \otimes \cI_{p_1,\dots,p_m}^4)$ and $g_3\in
H^0(\cO(18) \otimes \cI_{p_1,\dots,p_m}^6)$. Let $P$ and $Q$ be the associated
weighted homogeneous polynomials of degree 12 and 18 respectively. Then
\begin{equation}\label{eqnWe} y^2=x^3+Px+Q\end{equation}
defines a degree 18 hypersurface $Y$ in $\PP(6,9,1,1,1)$ birational to $W$. 

Let $\tilde{\psi}:\PP \to \PP^2$ be the projection from $\{z_0=z_1=z_2=0\}$ to
the
plane $\{x=y=0\}$. 
Then $\psi
=\tilde{\psi}|_Y$ corresponds to the elliptic fibration on $W$. Note that $p$ is
defined on $Y\setminus \{(1:1:0:0:0)\}$.
Since $W$ is smooth all singularities (besides  $(1:1:0:0:0)$) lie in
$\psi^{-1}(p_i)$ for $i=1,\dots m$.  

Equation (\ref{eqnWe})  shows that $\overline{\psi^{-1}(p_i)}$  has equation
$Y^2Z=X^3+P(p_i)XZ^2+Q(p_i)Z^3$. In particular, $\overline{\psi^{-1}(p_i)}$ is
an
irreducible and reduced cubic plane curve and it  has at most one singularity.
Since $Y$ is singular at $q_i=(0:0:p_i)$, the same holds for
$\overline{\psi^{-1}(p_i)}$, and there are no other singular points  on
$Y\setminus
\{(1:1:0:0:0)\}$.

We proceed by calculating the Milnor number of $(Y,q_i)$. A local equation for
$Y$ around $q_i$ is 
\[v^2=4u^3+h_4(t,s)u+h_6(t,s)+ h.o.t.\]
An easy calculation, using that $W$ is smooth,
shows that the lowest degree part
\[v^2=4u^3+h_4(t,s)u+h_6(t,s)\]
defines a quasismooth surface in $\PP(2,3,1,1)$.  
In particular, $(Y,q_i)$ is  a semi-weighted homogeneous hypersurface
singularity, i.e., we may ignore the higher order terms.

To calculate the Milnor number of $(Y,q_i)$ we need to consider the Jacobian
ring $R$ of the
defining equation of the singularity. 
Using Lemma~\ref{lemdimJac} (proven below) it follows that 
\[ \sum \dim R_d t^d = 1+2t+4t^2+6t^3+8t^4+8t^5+8t^6+6t^7+4t^8+2t^9+t^{10}.\] 
Hence $\mu=\dim R=50$.

To calculate the local cohomology it suffices to determine $\dim R_{d-w}=R_{-1}$
and $\dim R_{2d-w}=\dim R_{5}$. The former space is 0, the latter space is
8-dimensional. Now apply Proposition~\ref{prpIsol} and Theorem~\ref{thmAdj}.
\end{proof}

\begin{lemma}\label{lemdimJac}
 Let $f\in \CC[x_0,\dots,x_{n+1}]$ be a weighted homogeneous polynomial of
degree
$d$ with weights $w_0,\dots,w_{n+1}$. Assume that each $w_i$ divides $d$ and
that $f=0$ has at most an isolated singularity at the
origin. Let $R$ be the Jacobian ring of $f$. Then
\[ \sum_k \dim R_k t^k = \prod \frac{t^{d-w_i}-1}{t^{w_i}-1}.\]
\end{lemma}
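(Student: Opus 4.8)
The plan is to present $R = R(f)$ as $S/J$, where $S := \CC[x_0,\dots,x_{n+1}]$ and $J$ is the ideal generated by the $n+2$ partial derivatives $f_{x_0},\dots,f_{x_{n+1}}$, which are weighted homogeneous of degrees $d-w_0,\dots,d-w_{n+1}$ respectively, and to compute the Hilbert series of $R$ from a Koszul resolution. The only substantive point is that these $n+2$ partials form a regular sequence in $S$.

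First I would record why the isolated-singularity hypothesis gives the regular sequence. By the Euler relation $\sum_i w_i x_i f_{x_i} = d\, f$, the common zero locus $\Sigma\subset\CC^{n+2}$ of $f_{x_0},\dots,f_{x_{n+1}}$ is contained in $\{f=0\}$ and coincides with the singular locus of $\{f=0\}$; since $f$ is weighted homogeneous, $\Sigma$ is a cone, so the assumption that the singularity is at most isolated at the origin forces $\Sigma = \{0\}$ (or $\Sigma=\emptyset$, in which case $R$ is concentrated in a single degree and the formula is checked directly, or $R=0$). Thus $S/J$ has Krull dimension $0 = (n+2)-(n+2)$; because $S$ is Cohen--Macaulay of dimension $n+2$, any system of $n+2$ homogeneous elements cutting out a $0$-dimensional (or empty) scheme is a regular sequence. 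Hence $f_{x_0},\dots,f_{x_{n+1}}$ is a homogeneous regular sequence and the Koszul complex on them is a graded free resolution of $R$:
\[ 0 \to S\Bigl(-\textstyle\sum_i (d-w_i)\Bigr) \to \cdots \to \bigoplus_i S\bigl(-(d-w_i)\bigr) \to S \to R \to 0. \]

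Next I would take the alternating sum of Hilbert series along this resolution. Since $H_S(t) = \prod_i (1-t^{w_i})^{-1}$, the $p$-th Koszul term $\bigoplus_{|I|=p} S\bigl(-\sum_{i\in I}(d-w_i)\bigr)$ contributes $H_S(t)\sum_{|I|=p} t^{\sum_{i\in I}(d-w_i)}$, so
\[ \sum_k (\dim R_k)\, t^k \;=\; H_S(t)\prod_i \bigl(1-t^{d-w_i}\bigr) \;=\; \frac{\prod_i \bigl(1-t^{d-w_i}\bigr)}{\prod_i \bigl(1-t^{w_i}\bigr)}. \]
Finally I would invoke the divisibility hypothesis $w_i \mid d$: each factor $\frac{1-t^{d-w_i}}{1-t^{w_i}}$ is then the polynomial $1 + t^{w_i} + \cdots + t^{d-2w_i}$, and rewriting it as $\frac{t^{d-w_i}-1}{t^{w_i}-1}$ gives the claimed product formula (in particular confirming that the left-hand side is a polynomial, consistent with $\dim R < \infty$). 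I do not expect a real obstacle; the one step needing care is the regular-sequence argument, which is exactly where the isolated-singularity assumption is used and is a standard consequence of the Cohen--Macaulay property of the polynomial ring.
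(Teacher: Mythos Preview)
Your proposal is correct and follows essentially the same approach as the paper's proof: the paper also argues that the isolated-singularity hypothesis makes the partials a regular sequence, then resolves $R$ by the Koszul complex and reads off the Hilbert series. You have simply supplied the details (Euler relation, Cohen--Macaulay justification, explicit alternating sum) that the paper compresses into the phrase ``an easy calculation yields the proof.''
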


\begin{proof}
Since $f=0$ has at most a singularity at the origin it follows that the partials
of $f$ form a regular sequence in $\CC[x_0,\dots,x_{n+1}]$.
This implies that $R$ is resolved by its Koszul complex. An easy calculation
yields the proof.
\end{proof}

For the rest of this section, let $Y$ be the  degree $6n$ hypersurface  in
$\PP(6,9,1,1,1)$ constructed in the proof above. In particular,  $Y\cap
\{z_1=z_2=z_3=0\}=\{(1:1:0:0:0)\}$.  Let $q_i=(0:0:p_i)$.

The form of the singularity $(Y,q_i)$ allows us to use Dimca's results. For this
we first prove
the following two lemmas.

\begin{lemma}\label{lemqs} Let $T\subset \PP(6,9,1,1,1)$ be a quasismooth
hypersurface of degree $18$. Then $h^3(T)=546$ and the topological Euler
characteristic $e(T)=-542$.
\end{lemma}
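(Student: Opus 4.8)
The plan is to compute all the Betti numbers of $T$ and read off both assertions. Since $T$ is quasismooth it is a $V$-manifold, so the weighted Lefschetz hyperplane theorem (Proposition~\ref{prpLHT}, applied with $\Sigma=\emptyset$, so $\dim\Sigma=-1$) together with Poincar\'e duality for $V$-manifolds (\cite[Corollary B19]{Dim}) gives $H^i(T,\QQ)\cong H^i(\PP(6,9,1,1,1),\QQ)$ for every $i\neq 3$; hence $h^0(T)=h^2(T)=h^4(T)=h^6(T)=1$ and $h^1(T)=h^5(T)=0$. In particular $e(T)=4-h^3(T)$, so it suffices to prove $h^3(T)=546$.

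First I would reduce the computation of $H^3(T,\CC)$ to the Jacobian ring, exactly as in the quasismooth case of Section~\ref{secCoh}. Put $U=\PP(6,9,1,1,1)\setminus T$ and let $g$ be a defining polynomial of $T$. Because $H^3(\PP(6,9,1,1,1))=0$, the group $H^3(T)$ is entirely primitive, and for quasismooth $T$ the Poincar\'e residue map is an isomorphism $H^4(U,\CC)\cong H^3(T,\CC)_{\prim}(-1)$. Moreover, by the degeneration of the pole-order spectral sequence in the quasismooth case (cf.\ Remark~\ref{remDeg} and \cite[Section 4]{SteQua}),
\[ H^4(U,\CC)\;\cong\;\bigoplus_{k=1}^{4}R(g)_{18k-18}\;=\;R(g)_{0}\oplus R(g)_{18}\oplus R(g)_{36}\oplus R(g)_{54}, \]
where $R(g)$ is the graded Jacobian ring of $g$. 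Hence $h^3(T)=\dim R(g)_{0}+\dim R(g)_{18}+\dim R(g)_{36}+\dim R(g)_{54}$.

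It remains to compute these four dimensions. Each of the weights $6,9,1,1,1$ divides $d=18$, and the affine cone over the quasismooth hypersurface $T$ has at worst an isolated singularity at the origin, so Lemma~\ref{lemdimJac} applies to $g$ and gives
\[ \sum_{k}\dim R(g)_{k}\,t^{k}\;=\;\frac{t^{12}-1}{t^{6}-1}\cdot\frac{t^{9}-1}{t^{9}-1}\cdot\Bigl(\frac{t^{17}-1}{t-1}\Bigr)^{3}\;=\;(1+t^{6})\,(1+t+\cdots+t^{16})^{3}. \]
Reading off coefficients: $\dim R(g)_{0}=1$; the right-hand side has degree $6+48=54$ with leading coefficient $1$, so $\dim R(g)_{54}=1$; and since both factors are palindromic the Hilbert function is symmetric about degree $27$, so $\dim R(g)_{36}=\dim R(g)_{18}$. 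Writing $(1+t+\cdots+t^{16})^{3}=(1-t^{17})^{3}(1-t)^{-3}$ one gets $[t^{12}](1+\cdots+t^{16})^{3}=\binom{14}{2}=91$ and $[t^{18}](1+\cdots+t^{16})^{3}=\binom{20}{2}-3\binom{3}{2}=181$, whence $\dim R(g)_{18}=181+91=272=\dim R(g)_{36}$. Therefore $h^3(T)=1+272+272+1=546$ and $e(T)=4-546=-542$. There is no real obstacle here: the only point requiring care is remembering that the factor $(1-t^{17})^{3}$ already contributes in degree $18$, so the naive $\binom{k+2}{2}$ count has to be corrected by the $-3t^{17}$ term.
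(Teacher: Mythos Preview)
Your proof is correct and follows essentially the same approach as the paper: compute the non-middle Betti numbers via Lefschetz and Poincar\'e duality, then express $h^3(T)$ as $\dim R(g)_0+\dim R(g)_{18}+\dim R(g)_{36}+\dim R(g)_{54}$ via Griffiths--Steenbrink and evaluate. The only cosmetic difference is that the paper first invokes deformation invariance to pass to the Fermat hypersurface $y^2+x^3+z_0^{18}+z_1^{18}+z_2^{18}$ before computing, whereas you apply Lemma~\ref{lemdimJac} directly to an arbitrary quasismooth $g$; since that lemma already shows the Hilbert function depends only on the weights and degree, your route makes the deformation step superfluous.
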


\begin{proof} Since the topology of quasismooth hypersurfaces is invariant under
deformation, it suffices to prove this statement for $T$ given by
\[f:= y^2+x^3+z_0^{18}+z_1^{18}+z_2^{18}.\]
Let $R$ be the Jacobian ring of $f$. Using Griffiths-Steenbrink (see
Section~\ref{secCoh}) we know that
\[ h^3(T)=\dim R_0+\dim R_{18}+\dim R_{36}+\dim R_{54}.\]
An easy calculation shows that $\dim R_0=\dim R_{54}=1$ and $\dim R_{18}=\dim
R_{36}=272$. Hence $h^3(T)=546$. {}From Lefschetz' hyperplane theorem
(Proposition~\ref{prpLHT}) it follows that $h^i(T)=1$ for $i=0,2,4,6$ and all
other Betti numbers vanish. {}From this the equality $e(T)=4-546=-542$ follows.
\end{proof}

\begin{lemma} The topological Euler characteristic $e(Y)$ of $Y$ equals
$-542+50m$.
\end{lemma}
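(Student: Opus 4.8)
The plan is to compute $e(Y)$ by comparing $Y$ with a quasismooth hypersurface $T\subset\PP(6,9,1,1,1)$ of the same degree $18$, for which Lemma~\ref{lemqs} already gives $e(T)=-542$, and to show that the difference $e(Y)-e(T)$ is accounted for entirely by the $m$ hypersurface singularities of $Y$ of Milnor number $50$ produced in Lemma~\ref{lemMiln}. The two ingredients are thus: (i) $e(T)=-542$ for quasismooth $T$ of degree $18$ (Lemma~\ref{lemqs}); and (ii) by Lemma~\ref{lemMiln}, $Y_{\sing}$ consists of the point $(1:1:0:0:0)$, which lies on $\PP_{\sing}$, together with the $m$ points $q_i$, which are isolated semi-weighted homogeneous hypersurface singularities with $\mu(Y,q_i)=50$ and which do not meet $\PP_{\sing}$.

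First I would place $Y$ in a pencil. The space of degree $18$ hypersurfaces in $\PP$ has quasismooth generic member: after completing the square in $y$ and the cube in $x$ the generic degree $18$ hypersurface becomes a Weierstrass hypersurface $y^2=x^3+Px+Q$ with generic $(P,Q)$, and for generic $(P,Q)$ the discriminant curve $4P^3+27Q^2=0$ is smooth, so $Y$ is quasismooth (cf. Corollary~\ref{corTrvMW}, case (1)). Since quasismoothness is a nonempty Zariski-open condition, a general pencil through $[Y]$ gives a family $\{Y_t\}_{t\in D}$ over a small disc with $Y_0=Y$ and $Y_t$ quasismooth, hence $e(Y_t)=-542$, for $t\ne0$ near $0$. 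Next I would track the jump of $e$ at $t=0$. Near $\PP_{\sing}=\{z_0=z_1=z_2=0\}\cong\PP(6,9)$, every degree $18$ hypersurface meets this $\PP^1$ in a single point at which it inherits the same quotient singularity of type $\frac13(1,1,1)$: locally near $\PP_{\sing}$, for $t$ general, the hypersurface is the graph of a function on a fixed $\CC^3/\mu_3$, since the $y$-partial of the defining equation is a unit there. Consequently the local Euler characteristic of $Y_t$ near $\PP_{\sing}$ is independent of $t$ and contributes nothing to the jump. Away from the $q_i$ and from $\PP_{\sing}$ the family is a locally trivial topological fibration by Ehresmann's theorem, the total space being smooth and the projection a proper submersion there. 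Hence, by additivity of the Euler characteristic,
\[ e(Y_t)-e(Y)=\sum_{i=1}^m\bigl(e(F_i)-1\bigr), \]
where $F_i$ is the local Milnor fibre of the isolated $3$-dimensional hypersurface singularity $(Y,q_i)$ and $1$ is the Euler characteristic of a contractible neighbourhood of $q_i$ in $Y$. Since $F_i$ is homotopy equivalent to a wedge of $\mu(Y,q_i)=50$ copies of $S^3$, we have $e(F_i)=1-50=-49$, so $e(Y_t)-e(Y)=-50m$ and therefore
\[ e(Y)=e(Y_t)+50m=-542+50m. \]

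The step I expect to be the main obstacle is making the analysis near $\PP_{\sing}$ fully rigorous, i.e. checking that a general pencil through $[Y]$ has quasismooth generic member and that along $\PP_{\sing}$ the members of the pencil are uniformly modelled on $\CC^3/\mu_3$, so that no hidden vanishing cohomology appears there; granting the local coordinate computations of Section~\ref{secSetup} this is routine. Alternatively one may invoke the weighted-projective analogue of \cite[Corollary 5.4.4]{Dim} directly: for a threefold hypersurface with isolated singularities $e(Y)=e(Y_{\mathrm{gen}})+\sum_p\mu_p$, once one observes that the quotient singularity of $Y$ at $(1:1:0:0:0)$ is shared with $T$ and so contributes equally to both sides. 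Either way, $e(Y)=-542+50m$.
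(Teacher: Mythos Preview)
Your proof is correct and follows the same approach as the paper: the paper simply invokes \cite[Corollary~5.4.4]{Dim} to write $e(Y)=e(T)+\mu$, where $\mu$ is the total Milnor number of the singularities $q_i$ (the shared quotient singularity at $(1{:}1{:}0{:}0{:}0)$ contributing nothing to the difference), and then plugs in $e(T)=-542$ from Lemma~\ref{lemqs} and $\mu=50m$ from Lemma~\ref{lemMiln}. Your pencil-plus-Milnor-fibre argument is a direct derivation of exactly this formula in this setting, and you correctly identify the Dimca citation as the alternative route at the end.
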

\begin{proof}Let $T$ be a quasismooth hypersurface of the same degree of $Y$.
{}From e.g. \cite[Corollary 5.4.4]{Dim} it follows that
\[e(Y)=e(T)+\mu\]
where $\mu$ is the total Milnor number of $Y$, i.e., the sum of the Milnor
numbers of the singularities of $Y$ besides $(1:1:0:0:0)$.
{}From Lemma~\ref{lemMiln} and Lemma~\ref{lemqs} it follows that
$e(Y)=-542+50m$.
\end{proof}

Using the Lefschetz hyperplane theorem (Proposition~\ref{prpLHT})  we obtain
that 
\[ h^0(Y)=h^2(Y)=h^6(Y)=1 \mbox{ and } h^1(Y)=h^5(Y)=0.\]
Hence $h^3(Y)= 546-50m+h^4(Y)-1$.

To calculate $h^4(Y)$ we use Dimca's method. For this we need some results on
linear systems on $\PP^2$.

\begin{definition}  Let $L_d(k^m)$ be the linear system of degree $d$ curves
having a point of order $k$  at $p_1,\dots,p_m$. The defect of $L_d(k^m)$ equals
$m\frac{k(k+1)}{2}- \codim_{\CC[z_0,z_1,z_2]_d} L_d(k^m)$, i.e., the difference
between the expected codimension and the actual codimension.  
\end{definition}

We are interested in $L_{18}(6^m)$ and $L_{12}(4^m)$, in the case that the $m$
points are the $p_i$.

\begin{proposition}\label{propDef} For $k>0$ we have that the linear system
$L_{3k}(k^m)$ has no defect. 
\end{proposition}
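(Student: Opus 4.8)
The plan is to identify $L_{3k}(k^m)$ with the space of sections $H^0(S,\cO_S(-kK_S))$, where $S$ is the del Pezzo surface of Construction~\ref{ConHirz}, and then to compute the dimension of the latter by Riemann--Roch together with Kodaira vanishing.

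First I would set up the identification. Let $\varphi\colon S\to\PP^2$ be the blow-down, $E_1,\dots,E_m$ its exceptional divisors, and $L$ the pullback of a line, so that $\cL=\omega_S^{-1}=\cO_S(3L-\sum_i E_i)$ as in Construction~\ref{ConHirz}. Pulling a degree $3k$ form back along $\varphi$ and using that, for distinct reduced points $p_i$, a point of multiplicity $\geq k$ at $p_i$ corresponds to vanishing to order $\geq k$ along $E_i$, one obtains a natural isomorphism of $\CC$-vector spaces
\[ L_{3k}(k^m)\;=\;H^0\!\bigl(\PP^2,\cO(3k)\otimes{\textstyle\prod_{i=1}^{m}}\cI_{p_i}^{k}\bigr)\;\cong\;H^0\!\bigl(S,\cL^{k}\bigr)\;=\;H^0\!\bigl(S,\cO_S(-kK_S)\bigr), \]
the key point being that $\varphi_*\cO_S(-kK_S)=\cO(3k)\otimes\prod_{i=1}^{m}\cI_{p_i}^{k}$.

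Next I would compute $h^0(S,\cO_S(-kK_S))$. Since $S$ is del Pezzo, $-K_S$ is ample, hence for $k>0$ the class $-kK_S-K_S=-(k+1)K_S$ is ample; Kodaira vanishing gives $H^1(S,\cO_S(-kK_S))=0$, and Serre duality gives $H^2(S,\cO_S(-kK_S))\cong H^0(S,\cO_S((k+1)K_S))^\vee=0$, since $(k+1)K_S$ meets the ample class $-K_S$ negatively and so is not effective. Thus $h^0=\chi$, and Riemann--Roch on the rational surface $S$, using $\chi(\cO_S)=1$ and $K_S^2=9-m$, yields
\[ h^0\bigl(S,\cO_S(-kK_S)\bigr)=\chi\bigl(S,\cO_S(-kK_S)\bigr)=1+\tfrac12(-kK_S)\cdot\bigl(-(k+1)K_S\bigr)=1+\tfrac{k(k+1)}{2}(9-m). \]

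Finally, subtracting from $\dim_{\CC}\CC[z_0,z_1,z_2]_{3k}=\binom{3k+2}{2}=\tfrac{9k(k+1)}{2}+1$ and using the identification above gives
\[ \codim_{\CC[z_0,z_1,z_2]_{3k}}L_{3k}(k^m)=\tfrac{9k(k+1)}{2}+1-\Bigl(1+\tfrac{k(k+1)}{2}(9-m)\Bigr)=m\,\tfrac{k(k+1)}{2}, \]
which is precisely the expected codimension, so the defect of $L_{3k}(k^m)$ vanishes. Beyond this bookkeeping there is no real obstacle; the only steps that need genuine care are the base-locus identification in step one --- that imposing multiplicity $k$ at each $p_i$ cuts out exactly the class $-kK_S$ on $S$ and nothing smaller --- and the appeal to the del Pezzo hypothesis of Construction~\ref{ConHirz}, which is what guarantees that $-K_S$ is ample and hence that the two vanishing statements apply.
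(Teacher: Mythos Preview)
Your proof is correct and reaches the same conclusion, but it takes a different route from the paper. The paper also identifies $L_{3k}(k^m)$ with $H^0(S,\cO_S(kD))$ for $D=3L-\sum E_i=-K_S$, but instead of invoking Kodaira vanishing it picks a smooth irreducible curve $C\in|D|$ (an elliptic curve, since $C$ is the strict transform of a plane cubic), sets $\cL=\cO_S(D)|_C$, and uses the short exact sequences
\[ 0\to \cO_S((t-1)D)\to \cO_S(tD)\to \cL^{\otimes t}\to 0 \]
together with $h^1(\cL^{\otimes t})=0$ (Riemann--Roch on the genus~$1$ curve $C$, using $\deg\cL=D^2=9-m>0$) to prove $h^1(\cO_S(tD))=0$ by induction on $t$, and then to compute $h^0(\cO_S(tD))$ recursively.

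Your argument is shorter and more conceptual: once you observe that $-K_S$ is ample, Kodaira vanishing kills $H^1$ and $H^2$ in one stroke (your separate Serre-duality argument for $H^2$ is correct but already subsumed by Kodaira), and Riemann--Roch on $S$ gives the answer directly. The paper's approach is more hands-on and avoids appealing to Kodaira vanishing, at the cost of needing the auxiliary anticanonical curve and an induction. Both arguments ultimately use the del Pezzo hypothesis in an essential way---you for ampleness of $-K_S$, the paper for the existence of a smooth irreducible member of $|{-K_S}|$ and for $D^2>0$.
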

\begin{proof}
Note that $L_{3k}(k^m)$ is isomorphic to  $H^0(S,\str_S(3kH-k\sum E_m))$.
Set $D=3H-\sum E_i$ and let $C$ be an irreducible smooth curve in $|D|$. (Such a
curve exists since the $p_i$ are in general position and $m\leq 8$.) Since $C$
is the strict
transform of a degree 3 curve in $\PP^2$ we have that $g(C)=1$. 

Let $\cL=\str(D)|_C$. Then $\deg(\cL)=D^2=9-m>0$. Using $g(C)=1$ we find for
$t>0$ that $h^0(\cL^t)=t(9-m)$ and $h^1(\cL^{\otimes t})=0$. 

Consider now the long exact sequence in cohomology associated to 
\[ 0\to \str_S((t-1)D)\to \str_S(tD)\to \cL^{\otimes t}\to 0.\]
Since for $t\geq 1$ we have that  $h^1(\cL^{\otimes t})=0$, we find that $h^1(
\str_S(tD) ) \leq h^1( \str_S((t-1)D))$. 
Note that for $t=1$ we have that $h^1(\str_S((t-1)D))=h^{0,1}(S)=0$. Combining
this yields that $h^1( \str_S(tD) )=0$ for $t\geq 0$.
This implies that
\[h^0(\str_S(tD))=h^0(\str_S((t-1)D))+h^0(\cL^{\otimes t})=
h^0(\str_S((t-1)D))+t(9-m)\]
whence 
\[h^0(\str_S(tD)) = \frac{t(t+1)(9-m)}{2}+h^0(\str_S)=\frac{t(t+1)(9-m)}{2}+1.\]
The expected dimension of $L_{3k}(k^m)$ equals 
\[ \frac{(3k+1)(3k+2)}{2}-m \frac{k(k+1)}{2}=\frac{k(k+1)(9-m)}{2}+1.\]
This implies that $L_{3k}(k^m)$ has the expected dimension and thus
$L_{3k}(k^m)$ has
no defect.
\end{proof}

\begin{proposition}\label{hodgeY} We have that $h^4(Y)=1$, hence
$h^3(Y)=546-50m$.
\end{proposition}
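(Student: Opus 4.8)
The plan is to apply the method of Section~\ref{secMethod} to the hypersurface $Y\subset\PP(6,9,1,1,1)$ produced in Lemma~\ref{lemMiln}. First I would verify that $Y$ is admissible: by Lemma~\ref{lemMiln} one has $Y_{\sing}=\{(1:1:0:0:0)\}\cup\{q_1,\dots,q_m\}$ with $q_i=(0:0:p_i)$; the point $(1:1:0:0:0)$ lies in $\PP_{\sing}$ but not in $\Sigma$ (since $\partial g/\partial x=3x^2+P$ is nonzero there, as $P$ vanishes at $(0:0:0)$), while each $q_i$ is an isolated semi-weighted homogeneous singularity, hence contact equivalent to the weighted homogeneous $g_{q_i}=-v^2+4u^3+g_2^{(i)}(t,s)\,u+g_3^{(i)}(t,s)$ of degree $d_p=6$ in $\PP(2,3,1,1)$ (with $w_p=7$). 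Thus $\Sigma=\cP=\{q_1,\dots,q_m\}$ is $0$-dimensional, and $d-w=18-18=0$ while $d_p-w_p=-1$, so $R(g_{q_i})_{d_p-w_p}=0$. Hence the ``$r_0$'' map of the method is $\CC[x,y,z_0,z_1,z_2]_{0}=\CC\to 0$, so $r_0=0$; by Proposition~\ref{prpFil} (combined with Propositions~\ref{propLocCoh} and~\ref{thmCoKer}, and the purity supplied by admissibility) the Hodge structure $H^4(Y,\QQ)$ is then pure of type $(2,2)$ with $h^{3,1}=h^{1,3}=0$, and
\[ h^4(Y)-1=\dim\coker\Bigl(\,\CC[x,y,z_0,z_1,z_2]_{18}\longrightarrow \bigoplus_{i=1}^m R(g_{q_i})_{5}\,\Bigr). \]
So everything reduces to proving that this global-to-local map is surjective.

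For the surjectivity I would unwind the map. By Theorem~\ref{thmAdj} applied to $S_i=\{g_{q_i}=0\}\subset\PP(2,3,1,1)$ (a rational surface, in fact a del Pezzo of degree one, presented as a Weierstrass fibration over $\PP^1$), the space $R(g_{q_i})_{5}$ is identified with $H^{1,1}(S_i,\CC)_{\prim}$, of dimension $8$; under the normalization $u=x$, $v=y$ a class is represented by $A(t,s)+u\,B(t,s)$ with $\deg A=5$, $\deg B=3$, modulo the two Jacobian relations $\partial_t g_3^{(i)}+u\,\partial_t g_2^{(i)}$ and $\partial_s g_3^{(i)}+u\,\partial_s g_2^{(i)}$. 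Tracking the residue description of the global-to-local map through the construction of $Y$, the ``$B$-component'' of the image is controlled by the restriction of $\CC[z_0,z_1,z_2]_{12}$ (through the monomials $x\cdot(\text{degree }12)$) to the order-$4$ jets at the $p_i$, and the ``$A$-component'' by the restriction of $\CC[z_0,z_1,z_2]_{18}$ to the order-$6$ jets at the $p_i$; since in the Hirzebruch construction $g_2\in H^0(\cO(12)\otimes\cI_{p_1,\dots,p_m}^4)$ and $g_3\in H^0(\cO(18)\otimes\cI_{p_1,\dots,p_m}^6)$, the surjectivity of these two restriction maps is precisely the statement that the linear systems $L_{12}(4^m)$ and $L_{18}(6^m)$ have no defect, which is Proposition~\ref{propDef} for $k=4$ and $k=6$. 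This forces the cokernel above to vanish, so $h^4(Y)=1$.

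Finally, combining $h^4(Y)=1$ with the already-established values $h^0(Y)=h^2(Y)=h^6(Y)=1$, $h^1(Y)=h^5(Y)=0$ and the Euler characteristic $e(Y)=-542+50m$ gives $h^3(Y)=546-50m$. The main obstacle is the middle step: making precise the identification of the cokernel of $\CC[x,y,z_0,z_1,z_2]_{18}\to\bigoplus_i R(g_{q_i})_{5}$ with the (vanishing) defect spaces of $L_{12}(4^m)$ and $L_{18}(6^m)$ — that is, matching the residue/spectral-sequence description of the global-to-local map with the jet-evaluation maps governing those linear systems. Once that dictionary is in place, everything else is bookkeeping with the results of Part~II together with Theorem~\ref{thmCohRel}.
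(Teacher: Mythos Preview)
Your proposal is correct and follows essentially the same route as the paper: both reduce $h^4(Y)_{\prim}$ to the cokernel of the global-to-local map into $\bigoplus_i R(g_{q_i})_5$, split the target via the $A(t,s)+uB(t,s)$ description, factor the map through Taylor/jet evaluations at the $p_i$, and invoke Proposition~\ref{propDef} (no defect of $L_{12}(4^m)$ and $L_{18}(6^m)$) for surjectivity. The only cosmetic difference is that you phrase everything through the Procedure of Section~\ref{secMethod}, whereas the paper quotes Proposition~\ref{propLocCoh} and Theorem~\ref{thmCoKer} directly and writes the source as $\CC[z_0,z_1,z_2]_{12}\,x\oplus\CC[z_0,z_1,z_2]_{18}$ rather than the full weighted degree-$18$ piece (the two agree after passing to $R(f)_{18}$ since $y$ and $x^2$ lie in the Jacobian ideal).
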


\begin{proof}

{}From Dimca's work, (the dimension zero case of  Sections~\ref{secLoc}
and~\ref{secGlue}), it follows that the primitive cohomology
$H^4(Y,\QQ)_{\prim}$ is
isomorphic to the cokernel of
\[H^4(\PP\setminus Y,\QQ) \to \oplus_{q_i} H^4_{q_i}(Y,\QQ).\]
{}From Lemma~\ref{lemMiln} we know that $H^4_{q_i}(Y,\QQ)=\QQ(-2)^8$.

A local equation of $(Y,q_i)$ (see the proof of Lemma~\ref{lemMiln}) is
\[f_{q_i}:=-v^2+4u^3+h_{4,i}(t,s)u+h_{6,i}(t,s).\]
This equation is weighted homogeneous. Moreover, we know that this is an
equation
of a quasismooth surface. Let $R(f_{q_i})$ denote the Jacobian ring of
$f_{q_i}$.

{}From Proposition~\ref{propLocCoh} and Theorem~\ref{thmCoKer} it follows that
the
cokernel of $H^4(\PP\setminus Y,\CC) \to \oplus H^4_{q_i}(Y,\CC)$ equals the
cokernel of 
$ \Gr_P^2 H^4(\PP \setminus Y,\CC) \to \oplus H^4_{q_i}(Y,\CC)$. Using the
natural
maps 
\[ \CC[z_0,z_1,z_2]_{12}x \oplus \CC[z_0,z_1,z_2]_{18}\twoheadrightarrow
R(f)_{18}\twoheadrightarrow  \Gr_P^2 H^4(\PP\setminus Y,\CC)\]
it follows that it suffices to prove that
\begin{equation}\label{tayeqn} \CC[z_0,z_1,z_2]_{12}x \oplus
\CC[z_0,z_1,z_2]_{18}
\to \oplus H^4_{q_i}(Y,\CC) =\oplus_i R(f_{q_i})_{5} \end{equation}
is surjective.

 Define $T_{q,m,d}: \CC[z_0,z_1,z_2]_d \to \CC^{m(m+1)/2}$ to be the $(m-1)$st
part of the Taylor
expansion  around $(\alpha_1,\alpha_2,\alpha_3)$ for some
fixed lift of $q\in \PP^2$ to $\CC^3$. Then the map form (\ref{tayeqn}) can be
factored as 
\[    \CC[z_0,z_1,z_2]_{12}x \oplus \CC[z_0,z_1,z_2]_{18} \stackrel{\oplus
(T_{q_i,4,12} \oplus T_{q_i,6,18})}{\longrightarrow}  \oplus_i
\left(\CC^{10}\oplus \CC^{21}\right) \to \oplus R(f_{q_i})_{5}.\]
The first map is surjective by Proposition~\ref{propDef} and the second map is
surjective since it is a projection. {}From this the lemma follows.\end{proof}

Applying Theorem~\ref{thmMW} yields:
\begin{corollary} We have $\rank \MW(\pi)=0$.
\end{corollary}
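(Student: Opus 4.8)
The plan is to combine Proposition~\ref{hodgeY} with Theorem~\ref{thmMW}; once both are in hand the corollary is essentially a one-line deduction, so what follows is mostly a checklist of hypotheses. First I would confirm that $Y$ is a minimal Weierstrass fibration birational to the elliptic threefold $\pi\colon W\to S$ in the sense required by Theorem~\ref{thmMW}. Since $g_2$ and $g_3$ are general sections of $\cL^4$ and $\cL^6$, the associated homogeneous polynomials $P\in\CC[z_0,z_1,z_2]_{12}$ and $Q\in\CC[z_0,z_1,z_2]_{18}$ admit no $u\in\CC[z_0,z_1,z_2]\setminus\CC$ with $u^4\mid P$ and $u^6\mid Q$, so (\ref{eqnWe}) is a global minimal Weierstrass equation; moreover $W$ is birational to $Y$ by Lemma~\ref{lemMiln}, and $W\to S$ is an elliptic threefold over the rational surface $S$. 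Hence the hypotheses of Theorem~\ref{thmMW} are met provided $H^4(Y,\QQ)$ is pure of weight $4$.

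Next I would record the purity of $H^4(Y,\QQ)$. By Lemma~\ref{lemMiln} the singular locus of $Y$ consists of $(1:1:0:0:0)$ together with the $m$ isolated semi-weighted homogeneous points $q_i$; the point $(1:1:0:0:0)$ is itself an isolated weighted homogeneous hypersurface singularity, so $Y$ is admissible with $\dim\Sigma=0$, and Proposition~\ref{propLocCoh} (equivalently the $\dim\Sigma=0$ case of Theorem~\ref{singcohThm}) gives that $H^4(Y,\QQ)$ carries a pure weight $4$ Hodge structure with $h^{4,0}=h^{0,4}=0$. Proposition~\ref{hodgeY} then gives $h^4(Y)=1$. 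A one-dimensional pure weight $4$ Hodge structure with vanishing $(4,0)$ and $(0,4)$ parts is forced to be of type $(2,2)$, and its generator is algebraic: the restriction $h^2|_Y$ of the square of the ample hyperplane class is nonzero (it pairs nontrivially with $h|_Y$ since $Y$ is a threefold) and therefore spans $H^4(Y,\QQ)$. Consequently $\rank\bigl(H^{2,2}(H^4(Y,\CC))\cap H^4(Y,\ZZ)\bigr)=1$, and Theorem~\ref{thmMW} yields $\rank\MW(\pi)=1-1=0$.

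The only step carrying genuine content is Proposition~\ref{hodgeY} itself, whose proof reduces the computation of $h^4(Y)$ to the surjectivity of (\ref{tayeqn}), and that in turn to the no-defect statement for the linear systems $L_{3k}(k^m)$ established in Proposition~\ref{propDef}; everything downstream of that input is formal. I therefore expect no obstacle beyond making sure the local weighted-homogeneous models and the maps $\CC[z_0,z_1,z_2]_{12}x\oplus\CC[z_0,z_1,z_2]_{18}\to\oplus_i R(f_{q_i})_5$ are set up exactly as in Section~\ref{secLoc}, which has already been done.
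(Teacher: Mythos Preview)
Your proposal is correct and matches the paper's approach exactly: the paper's proof is literally the single phrase ``Applying Theorem~\ref{thmMW} yields'', invoked immediately after Proposition~\ref{hodgeY}, and your write-up simply spells out the hypothesis verification that the paper leaves implicit. One small imprecision worth fixing: the point $(1:1:0:0:0)$ lies in $\PP_{\sing}$ and not in $\Sigma$ (see Section~\ref{secSetup}), so it is a finite quotient singularity and admissibility is secured by condition~(1), $\Sigma\cap\PP_{\sing}=\emptyset$, rather than by it being a weighted homogeneous hypersurface singularity---but this does not affect your conclusion, since purity of $H^4(Y,\QQ)$ follows once $\dim\Sigma=0$ and the $q_i$ are semi-weighted homogeneous.
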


\begin{remark}
Actually, $\MW(\pi)=0$: let $\ell\subset \PP^2$ be a general line. Then
$\pi_\ell:\pi^{-1}(\ell)\to \ell$ is an elliptic surface with $36$ $I_1$ fibers.
(This follows from the fact that  the discriminant curve is reduced.)
Suppose $\MW(\pi_{\ell})$ has a torsion section of order $k$, then one can
factor the $j$-map over $X_1(k)\to X(1)$    since this map is ramified at
$\infty$ with ramifaction index $k$ it turns out that $\pi_\ell$ has a fiber of
type $I_{km}$ of $I^*_{km}$ for some $m\geq 1$. Since all fibers of $\pi_\ell$
are of type $I_1$ it follows that $\MW(\pi_\ell)$ has trivial torsion part,
hence $\MW(\pi)$ has trivial torsion.
\end{remark}


\begin{thebibliography}{10}
\bibitem{BatMir}
V.~V. Batyrev.
\newblock Dual polyhedra and mirror symmetry for {C}alabi-{Y}au hypersurfaces
  in toric varieties.
\newblock {\em J. Algebraic Geom.}, 3:493--535, 1994.

\bibitem{NikoDiplom}
N.~Behrens.
\newblock {C}alabi-{Y}au 3-{V}ariet\"aten mit elliptischen {F}aserungen \"uber
  {D}el {P}ezzo-{F}l\"achen.
\newblock Diplomarbeit, Leibniz Universit\"at Hannover, Hannover, 2006.

\bibitem{CleDS}
C.~H. Clemens.
\newblock Double solids.
\newblock {\em Adv. in Math.}, 47:107--230, 1983.

\bibitem{CoxNL}
D.~A. Cox.
\newblock The {N}oether-{L}efschetz locus of regular elliptic surfaces with
  section and {$p\sb g\ge 2$}.
\newblock {\em Amer. J. Math.}, 112:289--329, 1990.

\bibitem{Cynk}
S.~Cynk.
\newblock Defect of a nodal hypersurface.
\newblock {\em Manuscripta Math.}, 104:325--331, 2001.

\bibitem{DelDim}
P.~Deligne and A.~Dimca.
\newblock Filtrations de {H}odge et par l'ordre du p\^ole pour les
  hypersurfaces singuli\`eres.
\newblock {\em Ann. Sci. \'Ecole Norm. Sup. (4)}, 23:645--656, 1990.


\bibitem{DimReal}
A.~Dimca,
\newblock {\em Topics on real and complex singularities.}
\newblock Advanced Lectures in Mathematics. 
\newblock Friedr. Vieweg \& Sohn, Braunschweig, 1987.

\bibitem{DimBet}
A.~Dimca.
\newblock Betti numbers of hypersurfaces and defects of linear systems.
\newblock {\em Duke Math. J.}, 60:285--298, 1990.

\bibitem{Dim}
A.~Dimca.
\newblock {\em Singularities and topology of hypersurfaces}.
\newblock Universitext. Springer-Verlag, New York, 1992.

\bibitem{DimGri}
A.~Dimca, M.~Saito, and L.~Wotzlaw.
\newblock A generalization of Griffiths' theorem on rational integrals, II.
\newblock Preprint available at \texttt{arXiv:math/0702105v6}, 2007.

\bibitem{vGW}
B.~van Geemen and J.~Werner.
\newblock Nodal quintics in {${\bf P}\sp 4$}.
\newblock In {\em Arithmetic of complex manifolds (Erlangen, 1988)}, volume
  1399 of {\em Lecture Notes in Math.}, pages 48--59. Springer, Berlin, 1989.

\bibitem{GriRat}
P.~A. Griffiths.
\newblock On the periods of certain rational integrals. {II}.
\newblock {\em Ann. of Math. (2)}, 90:496--541, 1969.

\bibitem{GrSte}
M.~Grooten and J.H.M. Steenbrink.
\newblock Defect and {H}odge numbers of hypersurfaces.
\newblock In preperation, 2007.


\bibitem{JongPfi}
T.~de Jong and G.~Pfister,
\newblock {\em  Local analytic geometry.}
\newblock Advanced Lectures in Mathematics.
\newblock Friedr. Vieweg \& Sohn, Braunschweig, 2000.

\bibitem{KloR15}
R.~Kloosterman.
\newblock Elliptic {$K3$} surfaces with geometric {M}ordell-{W}eil rank 15.
\newblock {\em Canad. Math. Bull.}, 50:215--226, 2007.

\bibitem{KloNL}
R.~Kloosterman.
\newblock Higher {N}oether-{L}efschetz loci of elliptic surfaces.
\newblock {\em J. Differential Geom.}, 76:293--316, 2007.

\bibitem{jconst}
R.~Kloosterman.
\newblock  On the classification of degree 1 elliptic threefolds with constant $j$-invariant.
\newblock Preprint available at {\tt arxiv:0812.3014}.
\newblock 2008.

\bibitem{diff}
R.~Kloosterman.
\newblock
A different method to calculate the rank of an elliptic threefold.
\newblock To appear in\emph{ Rocky Mountain J. Math.}, available at {\tt arxiv:0812.3222}.

\bibitem{MirEllSur}
R.~Miranda.
{\em The basic theory of elliptic surfaces}. ETS Editrice, Pisa, 1989.

\bibitem{MirEllThree}
R.~Miranda.
\newblock Smooth models for elliptic threefolds.
\newblock In {\em The birational geometry of degenerations (Cambridge, Mass.,
  1981)}, volume~29 of {\em Progr. Math.}, pages 85--133. Birkh\"auser Boston,
  Mass., 1983.

\bibitem{ShOg}
 K. Oguiso and T. Shioda.  
 \newblock The Mordell-Weil lattice of a rational elliptic surface.
\newblock {\em Comment. Math. Univ. St. Paul.} 40:83--99, 1991.
  
\bibitem{PS}
C.~A.~M. Peters and J.~H.~M. Steenbrink.
\newblock Mixed {H}odge structures.
\newblock To appear in Ergebnisse der Mathematik, Springer, 2008.

\bibitem{Rams}
S.~Rams.
\newblock Defect and Hodge numbers of hypersurfaces.
\newblock Preprint available at \texttt{arXiv: math/0702114v1}, 2007.

\bibitem{Sch}
C.~Schoen.
\newblock Algebraic cycles on certain desingularized nodal hypersurfaces.
\newblock {\em Math. Ann.}, 270:17--27, 1985.

\bibitem{SteQua}
J.~H.~M. Steenbrink.
\newblock Intersection form for quasi-homogeneous singularities.
\newblock {\em Compositio Math.}, 34:211--223, 1977.

\bibitem{SteAdj}
J.~H.~M. Steenbrink.
\newblock Adjunction conditions for one-forms on surfaces in projective
  three-space.
\newblock In {\em Singularities and computer algebra}, volume 324 of {\em
  London Math. Soc. Lecture Note Ser.}, pages 301--314. Cambridge Univ. Press,
  Cambridge, 2006.

\bibitem{Waz}
R.~Wazir.
\newblock Arithmetic on elliptic threefolds.
\newblock {\em Compos. Math.}, 140:567--580, 2004.

\bibitem{Voi2}
C.~Vosion, 
\newblock {\em Hodge theory and complex algebraic geometry. II.}
Volume~77 of   Cambridge Studies in Advanced Mathematics,
 Cambridge Univ. Press, Cambridge, 2003.

\bibitem{Wer}
J.~Werner.
\newblock {\em Kleine {A}ufl\"osungen spezieller dreidimensionaler
  {V}ariet\"aten}.
\newblock Bonner Mathematische Schriften, 186.
  Universit\"at Bonn Mathematisches Institut, Bonn, 1987.
\newblock Dissertation, Rheinische Friedrich-Wilhelms-Universit\"at, Bonn,
  1987.

\end{thebibliography}
\end{document}